\newcommand\qan{{\quad\hbox{and}\quad}}
\newcommand\qin{{\quad\hbox{in}\quad}}
\newcommand\qon{{\quad\hbox{on}\quad}}
\newcommand\disp{\displaystyle}
\newcommand\bsi{{\boldsymbol{\sigma}}}
\newcommand\bta{{\boldsymbol{\tau}}}
\newcommand\bze{{\boldsymbol{\zeta}}}
\newcommand\bnu{{\boldsymbol{\nu}}}
\newcommand\bch{{\boldsymbol{\chi}}}
\newcommand\bet{{\boldsymbol{\eta}}}
\newcommand\bva{{\boldsymbol{\varphi}}}
\newcommand\bps{{\boldsymbol{\psi}}}
\newcommand\bla{{\boldsymbol{\lambda}}}
\newcommand\brh{{\boldsymbol{\rho}}}
\newcommand\bxi{{\boldsymbol{\xi}}}
\newcommand\f{{\mathbf f}}
\newcommand\g{{\mathbf g}}
\renewcommand\L{{\mathbf L}}
\renewcommand\H{{\mathbf H}}
\newcommand\F{{\mathbf F}}
\newcommand\G{{\mathbf G}}
\newcommand\X{{\mathbf X}}
\newcommand\Y{{\mathbf Y}}
\newcommand\I{{\mathbf I}}
\newcommand\R{{\mathrm R}}
\renewcommand\S{\mathbf{S}}
\newcommand\D{\mathbf{D}}
\newcommand\V{\mathbf{V}}
\newcommand\K{\mathbf{K}}
\newcommand\W{\mathbf{W}}
\newcommand\tr{{\mathrm{tr}\,}}
\newcommand\0{{\boldsymbol{0}}}
\newcommand\bdiv{\mathbf{div}\,}
\renewcommand\div{\mathrm{div}\,}
\renewcommand\u{\mathbf{u}}
\newcommand\bv{\mathbf{v}}
\newcommand\e{\mathbf{e}}
\newcommand\x{\mathbf{x}}
\newcommand\s{\mathbf{s}}
\newcommand\y{\mathbf{y}}
\newcommand\z{\mathbf{z}}
\newcommand\w{\mathbf{w}}
\renewcommand\a{\mathbf{a}}
\renewcommand\b{\mathbf{b}}
\newtheorem{lemma}{\sc Lemma}[section]
\newtheorem{theorem}{\sc Theorem}[section]
\newenvironment{proof}{\noindent{\it Proof.}}{\hfill$\square$}
\numberwithin{figure}{section}
\numberwithin{table}{section}
\numberwithin{equation}{section}
\title{New developments on the coupling of mixed-FEM and BEM\\
for the three-dimensional exterior Stokes problem}
\author{{\sc Gabriel N. Gatica}\thanks{CI$^2$MA and Departamento de Ingenier\'\i a Matem\'atica,
Universidad de Concepci\'on, Casilla 160-C, Concepci\'on, Chile,
email: {\tt ggatica@ci2ma.udec.cl}} \quad 
{\sc George C. Hsiao}\thanks{Department of Mathematical Sciences, University of Delaware,
Newark, DE 19716-2553, USA, e-mail: {\tt hsiao@math.udel.edu}} \\[0.2em]
{\sc Salim Meddahi}\thanks{Departamento de Matem\'aticas, Facultad de Ciencias,
Universidad de Oviedo, Calvo Sotelo s/n, Oviedo, Espa\~na,
e-mail: {\tt salim@uniovi.es}}\quad
{\sc Francisco-Javier Sayas}\thanks{Department of Mathematical Sciences, University of Delaware,
Newark, DE 19716-2553, USA, e-mail: {\tt fjsayas@math.udel.edu}. Partially funded by NSF grant DMS-1216356. } }
\date{ }
\begin{document}

\maketitle

\begin{abstract}
\noindent 
In this paper we consider the three dimensional exterior Stokes problem and study the solvability of the
corresponding continuous and discrete formulations that arise from the coupling of 
a dual-mixed variational formulation (in which the velocity, the pressure and the
stress are the original main unknowns) with the boundary integral equation method.
More precisely, after employing the incompressibility condition to eliminate the pressure, we consider the
resulting velocity-stress-vorticity approach with different kind of boundary conditions
on an annular bounded domain,  and couple the underlying equations with either one
or two boundary integral equations arising from the application of the usual and normal traces
to the Green representation formula in the exterior  unbounded region. As a result, 
we obtain saddle point operator equations, which are then  analyzed by the well-known
Babu\v ska-Brezzi theory. We prove the well-posedness of the  continuous
formulations, identifying previously the space of solutions of the associated 
homogeneous problem, and specify explicit hypotheses to be satisfied by the finite element and 
boundary element subspaces in order to guarantee the stability of the respective Galerkin schemes. 
In particular, following a similar analysis given recently for the
Laplacian, we are able to extend the classical Johnson \& N\'ed\'elec procedure
to the present case, without assuming any restrictive smoothness requirement  on
the coupling boundary, but only Lipschitz-continuity. In addition, and differently
from known approaches for the elasticity problem, we are also able to extend the
Costabel \& Han coupling procedure to the 3D Stokes problem
by providing a direct proof of the required coerciveness property, that is without
argueing by contradiction, and by using the natural norm of each space instead of 
mesh-dependent norms. Finally, we briefly describe concrete examples of discrete spaces satisfying 
the aforementioned hypotheses.

\vskip5truept
\noindent
{\bf Key words}: mixed-FEM, BEM, 3D Stokes problem, Johnson \& N\'ed\'elec's coupling,
Costabel \& Han's coupling

\vskip5truept
\noindent
{\bf Mathematics Subject Classifications (1991)}:  65N30, 65N38, 76D07, 76M10, 76M15

\end{abstract}

\section{Introduction}\label{section1}
The classical approach combining finite element (FEM) with boundary element methods (BEM) for solving
exterior boundary value problems in continuum mechanics, usually known as the coupling
of FEM and BEM, has been extensively employed since its creation during the second half of the seventies 
up to nowadays. The usual procedure is as follows. The underlying domain is first divided into two subregions 
by introducing an auxiliary boundary $\Gamma$, if necessary, so that the original exterior problem can be reformulated 
as a transmission problem through $\Gamma$. Next, the latter is reduced to an equivalent problem in the 
bounded inner region by imposing nonlocal boundary conditions on $\Gamma$ that are derived by employing 
boundary integral equation methods in the unbounded outer domain. The resulting nonlocal boundary value problem 
is then solved by a conventional Galerkin method, in which the boundary integral operators involved are discretized using finite 
element spaces on $\Gamma$. 

\medskip
While detailed surveys on most of the different ways of coupling BEM and FEM can be seen in 
\cite{Hsiao} and \cite[Chapter I]{GaticaHsiao}, we simply recall here that the most popular ones correspond to
the {\sl Johnson \&  N\'ed\'elec} (J \& N) and {\sl Costabel \& Han} (C \& H) 
procedures (cf. \cite{bj1979}, \cite{BrezziJohnsonNedelec}, \cite{c1987}, \cite{h1990}, \cite{jn1980}, 
and \cite{Zienkiewicz}), which employ the Green representation of the solution in the 
unbounded region. The success of the J \& N method, being
based on a single boundary integral equation on $\Gamma$ and the Fredholm theory, hinged on the fact
that certain boundary integral operators are compact, which usually requires $\Gamma$ to be smooth enough.
According to it, it was not possible, at least from a theoretical point of view, to employ this approach
when the coupling boundary was non--smooth, say for instance polygonal, which left out 
the possibility of utilizing classical finite element discretizations. Moreover,  the J \& N idea
seemed to be applicable only to the Laplace operator since for other elliptic systems, such as the
elasticity one, and irrespective of the smoothness of the boundaries, the aforementioned compactness
did not hold. One attempt to overcome this was suggested in \cite{BielakMacCamy} where the 
underlying transmission problem was replaced by one employing the pseudostress instead of the 
usual stress. As a consequence, the foregoing mapping property was achieved, but the coupling boundary 
was still required to be smooth enough. One has to admit, however, that the above described drawbacks were
mainly theoretical since no failure of the corresponding discrete schemes was ever reported by users of 
the method in problems where those hypotheses were not met. Any way, in order to circumvent these
apparent difficulties, suitable modifications of the original  J \& N method, 
in which neither the compactness nor the smoothness play any role, were proposed 
by {\sl Costabel} and {\sl Han} in \cite{c1987} and \cite{h1990}, respectively. 
Both techniques are based on the addition of a boundary integral equation for the normal derivative 
(resp. traction in the case of elasticity). The former leads to a symmetric and non-positive definite scheme, 
while the latter, on the contrary, yields a positive definite and non-symmetric scheme. Nevertheless, 
and since the only difference between these formulations lies on the sign of an integral identity, 
from now on we simply refer to either one of them as the C \& H approach. Further and later contributions 
in this direction, including applications to nonlinear problems and coupling with mixed-FEM, non--conforming FEM, 
local discontinuous Galerkin, and hybridizable discontinuous Galerkin methods, can be found in 
\cite{affkmp-CM-2013}, \cite{bcs1996}, \cite{bgs-IMANUM-2008}, \cite{CarstensenFunken2}, 
\cite{CarstensenFunken1}, \cite{cfs-1997}, \cite{CockburnSayas},
\cite{cs-1990}, \cite{gh-SINUM-2000}, \cite{ghs-MC-2010}, \cite{gh-ZAA-1989}, 
\cite{gs-MC-2006}, \cite{gw1996}, \cite{gw1997}, \cite{Meddahi}, and the references therein.

\medskip
The whole picture on the coupling of FEM and BEM, and particularly the widely accepted fact
since the eighties concerning the lack of further applicability and usefulness of the J \& N method, changed dramatically 
with \cite{s-SINUM-2009}. More precisely, it was proved in this paper, without any need of applying Fredholm theory nor 
assuming smooth domains, that all Galerkin methods for this approach are actually stable, thus 
allowing the coupling boundary $\Gamma$ to be polygonal/polyhedral. As a consequence, 
the classical J \& N method was begun to be considered 
as a real competitor of the C \& H approach. In other words, the appearing of \cite{s-SINUM-2009} gave
rise to several new contributions within this and related topics. Indeed, we first refer to 
\cite{mss-MC-2011} where the corresponding extension to the combination of mixed-FEM and BEM 
on any Lipschitz-continuous interface $\Gamma$ was successfully developed. Furthermore, the analysis of the 
quasi--symmetric procedure from \cite{BielakMacCamy} was improved in \cite{ghs-NM-2012} by 
showing that the interface $\Gamma$ can also be taken polygonal/polyhedral,
and that in the case of the elasticity problem, the coupling can be performed by employing the usual stress
instead of the pseudostress. In addition, a new and extremely simplified proof of the main result in \cite{s-SINUM-2009},
by showing directly ellipticity of the operator equation, was provided in \cite{ghs-NM-2012}.
An alternative proof of this ellipticity result has been recently given in \cite{s-SINUM-2011}, using a
particular expression of the Steklov--Poincar\'{e} operator, which is based on a Schur complement 
of a perturbation of the Calder\'{o}n projector. Some comments on the consequences of the new theory of non-symmetric coupling of BEM and FEM can be found in the
republishing  of \cite{s-SINUM-2009} as \cite{s-SR-2013}. Nevertheless, the utilization of mixed-FEM 
instead of the usual FEM, and the application of the J \& N coupling procedure 
to the 3D Stokes and similar elliptic systems such as Lam\'e, is still missing. 
Moreover, most of the related works available in the literature involve either
2D problems or just the coupling of BEM and the usual FEM (see, e.g. \cite{g-SINUM-1987}, 
\cite{r-EABE-2011}, \cite{r-IJNMF-2012}, and \cite{s-MMAS-1983}). In addition,  the analysis of 
the C \& H approach for the coupling of mixed-FEM and BEM has not yielded too satisfactory 
results when it has been applied to the elasticity problem (see, e.g. \cite{bcs1996}).

\medskip
According to the above bibliographic discussion, and specially motivated by the recent results
from \cite{s-SINUM-2009}, \cite{mss-MC-2011}, and \cite{ghs-NM-2012}, we now aim to analyze the coupling 
of mixed-FEM and BEM, as applied to the 3D exterior Stokes problem, by utilizing both the J \& N
and the C \& H approaches. More precisely, we extend the first method to the present case, without 
assuming any smoothness requirement on the interface, but only Lipschitz-continuity. 
Furthermore, and differently from the analysis in \cite{bcs1996} for the elasticity problem, 
we are also able to extend the second coupling procedure to the 3D Stokes problem
by providing a direct proof of the required coerciveness property, that is without
argueing by contradiction, and by using the natural norm of each space 
instead of mesh-dependent norms. Our results here can be easily extended to the
2D and 3D Lam\'e systems.

\medskip
The rest of this paper is organized as follows. In Section \ref{section2} we introduce the exterior boundary value problem
of interest by describing it as the transmission problem between the non-homogeneous Stokes equation
(holding in a bounded  annular domain $\Omega^-$) and the homogeneous Stokes equation (holding in an 
unbounded exterior region $\Omega^+$). The incompressibility condition is employed here to eliminate
the pressure so that the stress tensor and the velocity vector become the main unknowns of the resulting transformed 
problem. The dual-mixed formulations in $\Omega^-$ for different boundary conditions on the interior boundary 
of this region are derived in Section \ref{section3}. Then, in Section \ref{section4} we recall the main aspects
and properties of the boundary integral equation approach as applied to the homogeneous Stokes equation
in $\Omega^+$. Next, in Section \ref{section5} we derive and analyze the coupled variational formulations that
arise from the combination of the dual-mixed approach in $\Omega^-$ with the boundary integral equation method
in $\Omega^+$. We first identify the solutions of the associated homogeneous problems and then establish the well-posedness
of the continuous formulations. In particular, the classical Johnson \& Nedelec procedure, which employs a single
boundary integral equation and yields a non-symmetric scheme, is extended to the present case by requiring only 
a Lipschitz-continuous coupling boundary. In addition, the Costabel \& Han approach, which makes use of two 
bounday integral equations and leads to a symmetric formulation, is also successfully analyzed with the natural
norms of the spaces involved and through direct proofs of the required continuous and discrete coeciveness properties.
Finally, in Section \ref{section6} we consider the Galerkin schemes arising from the coupled formulations studied 
in Section \ref{section5}, and provide explicit hypotheses to be satisfied by the respective discrete spaces in order to 
guarantee their corresponding solvability and stability. Moreover, concrete examples of finite element and boundary element subspaces verifying those conditions are also identified here.

\medskip
We end this section with some notations to be used below. Given any Hilbert space $U$,
we denote by $U^3$ and $U^{3\times 3}$, respectively, the space of vectors and square matrices
of order $3$ with entries in $U$. In particular, the identity matrix of
$\R^{3\times 3}$ is $\I$, and given $\bta:=(\tau_{ij})$, $\bze:=(\zeta_{ij})\in\R^{3\times 3}$,
we write as usual
\[
\bta^{\tt t}\,:=\,(\tau_{ji})\,,\quad \tr\bta\,:=\,\sum_{i=1}^3
\tau_{ii}\,,\quad \bta^{\tt d}:=\bta-\frac{1}{3}\,\tr(\bta)\,\I\,,\quad\hbox{and}\quad
\bta : \bze\,:=\,\sum_{i,j=1}^3 \tau_{ij}\,\zeta_{ij}\,.
\]
Also, in what follows we utilize the standard terminology for Sobolev spaces and norms. However, 
given a domain $\mathcal O$, a closed Lipschitz curve $\Sigma$, and $r\in \R$, we 
simplify notations and define
\[
\mathbf H^r(\mathcal O)\,:=\, [H^r(\mathcal O)]^3\,,\quad
\mathbb H^r(\mathcal O)\,:=\,[H^r(\mathcal O)]^{3\times 3}\,,
\qan \mathbf H^r(\Sigma)\,:=\, [H^{r}(\Gamma)]^3\,.
\]
In the special case $r=0$ we usually write $\mathbf L^2(\mathcal O)$,
$\mathbb L^2(\mathcal O)$, and $\mathbf L^2(\Gamma)$ instead of $\mathbf H^0(\mathcal O)$, 
$\mathbb H^0(\mathcal O)$, and $\mathbf H^0(\Gamma)$, respectively.
The corresponding norms are denoted by $\|\cdot\|_{r,\mathcal O}$ 
(for $H^r(\mathcal O)$, $\mathbf H^r(\mathcal O)$, and $\mathbb H^r(\mathcal O)$) 
and $\|\cdot\|_{r,\Gamma}$ (for $H^r(\Gamma)$ and $\mathbf H^r(\Gamma)$).
In addition, denoting by $\bdiv$ the usual divergence operator $\div$ acting on the rows
of a tensor, we define the Hilbert space
\[
\mathbb H(\bdiv;\mathcal O)\,:=\,\big\{ \bta \in \mathbb L^2(\mathcal O):\quad 
\bdiv \bta \in \L^2(\mathcal O)\big\}\,,
\]
and the subspace
\begin{equation}\label{tilde-hbdiv}
\widetilde{\mathbb H}(\bdiv;\mathcal O)\,:=\,\left\{\,\boldsymbol\tau\,\in\,\mathbb H(\bdiv;\mathcal O):
\quad \int_{\mathcal O} \tr \boldsymbol\tau \,=\, 0\,\right\}
\end{equation}
which are both endowed with the norm
\[
\|\bta\|_{\bdiv;\mathcal O}\,:=\,\Big\{\,\|\bta\|^2_{0,\mathcal O}\,+\,\|\bdiv \bta\|^2_{0,\mathcal O}\,\Big\}^{1/2}
\qquad\forall\,\bta\,\in\,\mathbb H(\bdiv;\mathcal O)\,.
\]
Note that there holds the decomposition:
\begin{equation}\label{decomp-hdiv-1}
\mathbb H(\bdiv;\mathcal O)\,=\,\widetilde{\mathbb H}(\bdiv;\mathcal O)\,\oplus\,P_0(\mathcal O)\,\I\,,
\end{equation}
where $P_0(\mathcal O)$ is the space of constant polynomials on $\mathcal O$. 

\medskip
Finally, throughout the paper we employ $\0$  to denote a generic
null vector, and use $C$ and $c$, with or without subscripts, bars, tildes or hats, to denote
generic constants independent of the discretization parameters, which may take
different values at different places.
\section{The boundary value problem}\label{section2}
Let $\Omega_0$ be a bounded Lipschizt-continuous domain in $\R^3$ with boundary
$\Gamma_0$, let $\Omega^-$ be the annular region bounded by $\Gamma_0$ and
another Lipschitz-continuous surface $\Gamma$ whose interior contains $\bar\Omega_0$,
and let $\Omega^+\,:=\,\R^3 \backslash \big(\bar\Omega_0 \cup \bar \Omega^-\big)$
(see Figure 2.1 below). We consider a steady incompressible flow in the region $\R^3\backslash\Omega_0$, 
under the action of external forces on $\bar\Omega^-$, and are interested
in determining the velocity, the pressure, and the stress of the corresponding
fluid. More precisely, given $\f\in \L^2(\Omega^-)$, 
we seek a vector field $\u$, a scalar field $p$, and a tensor field $\bsi$
such that:
\begin{equation}\label{transmission-problem}
\begin{array}{c}
\disp
\bsi\,=\,\Xi[\u,p]\,:=\,2\,\mu\,\e(\u)\,-\,p\,\I \qan \div \u\,=\,0 \qin \Omega^-\,\cup\,\Omega^+\,,\\[2ex]
\disp
\bdiv\bsi \,=\,-\,\f\qin \Omega^-\,, \quad \bdiv\bsi \,=\,\0 \qin \Omega^+\,, \quad \hbox{\sc BC} \qon \Gamma_0\,, \\[2ex]
\disp
[\u]\,:=\,\u^-\,-\,\u^+\,=\,\0 \qan [\bsi \bnu ]\,:=\,(\bsi \bnu )^-\,-\,(\bsi \bnu )^+\,=\,\0 \qon\Gamma\,,\\[2ex]
\disp
\u(\x)\,=\,O(\|\x\|^{-1}) \qan p(\x)\,=\,O(\|\x\|^{-2})\quad\hbox{as}\quad \|\x\|\to +\infty\,,
\end{array}
\end{equation}
where $\hbox{\sc BC}$ stands for a suitable boundary condition on $\Gamma_0$, which will
be specified later on. Hereafter, $\Xi$ is the stress operator acting on the velocity/pressure pair, 
$\mu$ is the kinematic viscosity of the fluid, $\e(\u)\,:=\,\frac1{2}\,\big(\nabla \u + (\nabla\u)^t\big)$
is the strain tensor (or symmetric part of the velocity gradient), $\bnu$ is the unit normal on $\Gamma_0$ 
and $\Gamma$ pointing inside $\Omega^-$ and $\Omega^+$, respectively, 
\[
\u^\pm(\x)\,:=\,\lim_{\stackrel{\tilde\x\,\to\,\x}{\tilde\x\,\in\,\Omega^\pm}}\,\u(\tilde\x)\qquad
\forall\,\x\,\in\,\Gamma\,,
\]
and
\[
(\bsi \bnu )^\pm(\x) \,:=\,\lim_{\stackrel{\tilde\x\,\to\,\x}{\tilde\x\,\in\,\Omega^\pm}}\,
\bsi(\tilde\x)\,\bnu(\x)\qquad\forall\,\x\,\in\,\Gamma\,.
\]

\medskip

\begin{center}

\includegraphics[height=6cm]{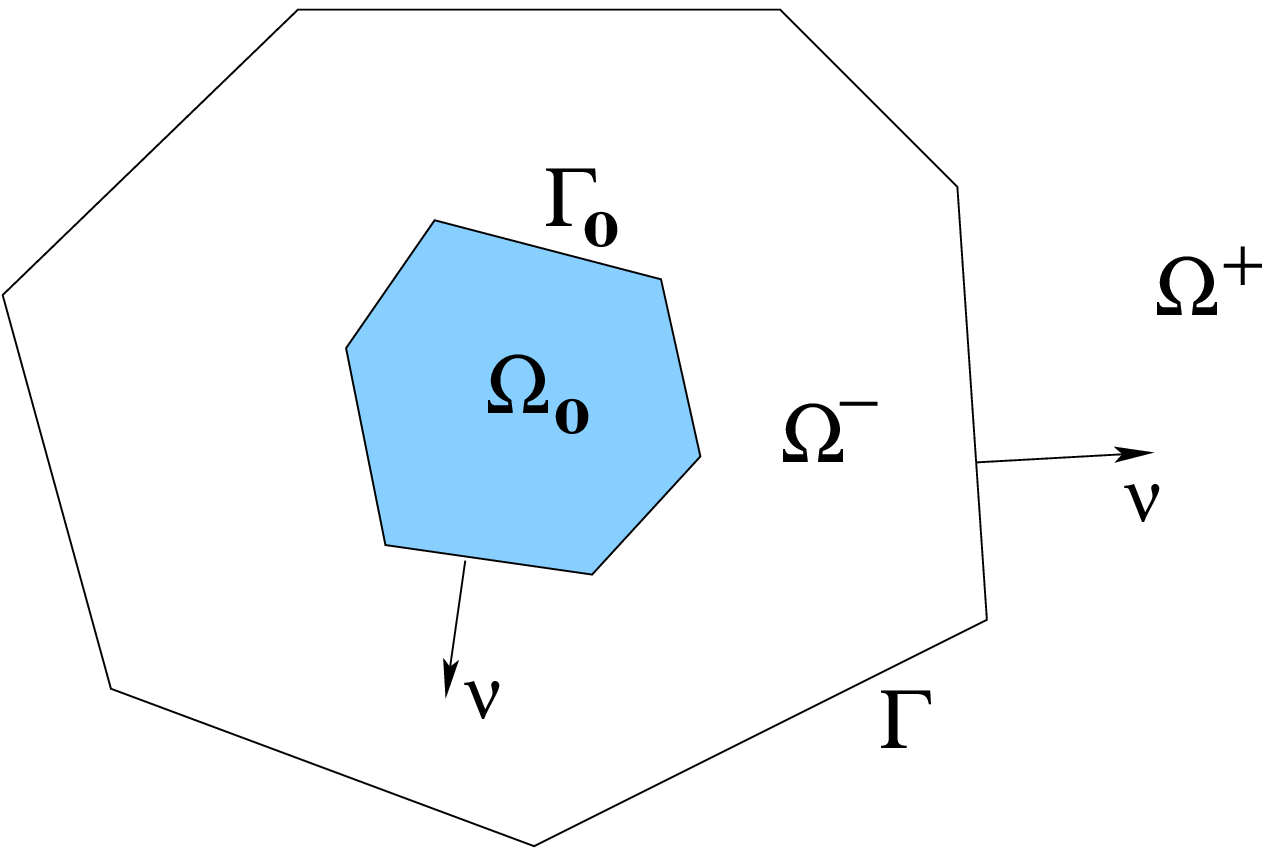}

\bigskip

{\bf Figure 2.1}: Geometry of the problem.

\end{center}

\medskip\noindent
Note that, thanks to the incompressibility condition given by
$\div\,\u\,=\,0 \qin \Omega^-\,\cup\,\Omega^+$, there holds
$\bdiv\,\bsi\,=\,\mu\,\Delta\,\u\,-\,\nabla\,p\qin \Omega^-\,\cup\,\Omega^+$, 
which means that the second row of \eqref{transmission-problem} becomes the
nonhomogeneous and homogeneous Stokes equations in $\Omega^-$ and $\Omega^+$,
respectively. However, since we are going to apply a mixed variational formulation
in $\Omega^-$ and the associated boundary integral equation approach in $\Omega^+$,
we need to keep $\bsi$ as an independent unknown.

\medskip
Throughout the rest of the paper, and without loss of generality, we assume that
$\mu = 1/2$. Otherwise, we just redefine $p$ as $p/2\mu$ and
let $\bsi\,=\,\Xi[\u,p]\,:=\,\e(\u)\,-\,p\,\I \qin \Omega^-\,\cup\,\Omega^+$,
which yields the datum $\f$ to be replaced by $\f/2\mu$. In addition, it is
easy to see, using that $\tr\e(\u)\,=\,\div \u$, that the pair of equations
\[
\bsi\,=\,\Xi[\u,p]\,:=\,\e(\u)\,-\,p\,\I \qan
\div\u\,=\,0 \qin \Omega^-\,\cup\,\Omega^+\,,
\]
is equivalent to
\begin{equation}\label{equivalent}
\bsi\,=\,\Xi[\u,p]\,:=\,\e(\u)\,-\,p\,\I \qan
p\,+\,\frac{1}{3}\,\tr \bsi\,=\,0 \qin \Omega^-\,\cup\,\Omega^+\,,
\end{equation}
which can be rewritten as
\begin{equation}\label{equivalent-otra}
\bsi^{\tt d}\,=\,\e(\u) \qan
p\,+\,\frac{1}{3}\,\tr \bsi\,=\,0 \qin \Omega^-\,\cup\,\Omega^+\,.
\end{equation}

\medskip
Consequently, from now on we replace our transmission problem \eqref{transmission-problem} by the following:
\begin{equation}\label{transmission-problem-con-presion}
\begin{array}{c}
\disp
\bsi^{\tt d}\,=\,\e(\u) \qan p\,+\,\frac{1}{3}\,\tr \bsi\,=\,0 \qin \Omega^-\,\cup\,\Omega^+\,,\\[2ex]
\disp
\bdiv\bsi \,=\,-\,\f\qin \Omega^-\,, \quad \bdiv\bsi \,=\,\0 \qin \Omega^+\,, \quad \hbox{\sc BC} \qon \Gamma_0\,, \\[2ex]
\disp
[\u]\,:=\,\u^-\,-\,\u^+\,=\,\0 \qan [\bsi \bnu ]\,:=\,(\bsi \bnu )^-\,-\,(\bsi \bnu )^+\,=\,\0 \qon\Gamma\,,\\[2ex]
\disp
\u(\x)\,=\,O(\|\x\|^{-1}) \qan p(\x)\,=\,O(\|\x\|^{-2})\quad\hbox{as}\quad \|\x\|\to +\infty\,.
\end{array}
\end{equation}

\medskip
Our aim throughout the following sections is to introduce and analyze 
several weak formulations of \eqref{transmission-problem-con-presion}, employing
either the Johnson \& N\'ed\'elec (see \cite{jn1980}) or the 
Costabel \& Han (see \cite{c1987}, \cite{h1990}) coupling procedures, and taking into account
the specific boundary condition on $\Gamma_0$. Since the pressure can be computed in terms of the stress, 
we focus mainly on the approaches that do not include $p$ as an explicit unknown but only 
as part of $\bsi$. In what follows we let $\gamma^- : \mathbf H^1(\Omega^-) \,\to\,\mathbf H^{1/2}(\partial\Omega^-)$ 
and $\gamma^-_\bnu : \mathbb H(\bdiv;\Omega^-) \,\to\,\mathbf H^{-1/2}(\partial\Omega^-)$  be 
the usual trace and normal trace operators, respectively, on $\partial\Omega^-\,:=\,\Gamma_0\,\cup\,\Gamma$.
Similarly, given a fixed Lipschitz-continuous surface $\Gamma^+$ whose interior region contains 
$\bar\Omega_0\,\cup\,\bar\Omega^-$, we let $\Omega^{++}$ be the annular domain bounded by $\Gamma$ and $\Gamma^+$,
and let $\gamma^+ : \mathbf H^1(\Omega^{++}) \,\to\,\mathbf H^{1/2}(\Gamma)$ 
and $\gamma^+_\bnu : \mathbb H(\bdiv;\Omega^{++}) \,\to\,\mathbf H^{-1/2}(\Gamma)$  be 
the usual trace and normal trace operators, respectively, on $\Gamma$. In this way, 
the transmission conditions on $\Gamma$ can be rewritten in \eqref{transmission-problem-con-presion} as:
\begin{equation}\label{transmission-conditions}
\gamma^-(\u)\,\,=\,\,\gamma^+(\u) \qan \gamma^-_\bnu(\bsi)\,\,=\,\,\gamma^+_\bnu(\bsi) \qon \Gamma\,.
\end{equation}
\section{The dual-mixed formulations in $\Omega^-$}\label{section3}
We first proceed similarly as for the linear elasticity problem (see, e.g. \cite{abd-JJAM-1984}, 
\cite{g-SPRINGER-2014}, \cite{s1988}) and introduce 
in the bounded domain $\Omega^-$ the vorticity 
\begin{equation}\label{vorticity}
\bch\,:=\,\frac12\left( \nabla \u - (\nabla \u)^{\tt t}\right) \,\in\,\mathbb L^2_{\tt skew}(\Omega^-)
\end{equation}
as an auxiliary unknown, where
\[
\mathbb L^2_{\tt skew}(\Omega^-)\,:=\,\Big\{\,\bet\,\in\,\mathbb L^2(\Omega^-):
\quad \bet^{\tt t}\,=\,-\,\bet\,\Big\}\,.
\]
In this way, the constitutive equation relating $\u$ and $\bsi$ in $\Omega^-$ becomes
\[
\bsi^{\tt d}\,\,=\,\,\nabla\u\,-\,\bch \qin \Omega^-\,,
\]
which, multiplying (tensor product $:$) by $\bta\,\in\,\mathbb H(\bdiv;\Omega^-)$ and integrating by parts, yields
\begin{equation}\label{weak-1}
\int_{\Omega^-} \bsi^{\tt d} : \bta^{\tt d}\,-\,\langle\gamma^-_\bnu(\bta),\bva\rangle_\Gamma
\,+\,\int_{\Omega^-}\u\cdot\bdiv\,\bta\,+\,\int_{\Omega^-} \bch:\bta\,+\,
\langle \gamma^-_\bnu(\bta),\gamma^-(\u)\rangle_{\Gamma_0}\,=\,0\,,
\end{equation}
where
\begin{equation}\label{trace-u}
\bva\,:=\,\gamma^-(\u)\,\,=\,\,\gamma^+(\u)\,\in\,\mathbf H^{1/2}(\Gamma)
\end{equation}
is an additional unknown, and, given $S\,\in\,\{\Gamma,\Gamma_0\}$, $\,\langle\cdot,\cdot\rangle_S$ 
denotes the duality pairing between $\mathbf H^{-1/2}(S)$ and $\mathbf H^{1/2}(S)$ with respect to the 
$\mathbf L^2(S)$-inner product. On the other hand, incorporating the equilibrium equation in $\Omega^-$ and
the symmetry of the stress tensor $\bsi$ in a weak sense, we arrive at
\begin{equation}\label{weak-2}
\int_{\Omega^-} \bv\cdot\bdiv\,\bsi\,+\,\int_{\Omega^-} \bet:\bsi\,=\,-\,\int_{\Omega^-}\f\cdot\bv
\qquad\forall\,(\bv,\bet)\,\in\,\mathbf L^2(\Omega^-) \times \mathbb L^2_{\tt skew}(\Omega^-)\,.
\end{equation}
\subsection{Dirichlet boundary condition on $\Gamma_0$}\label{section31}
We assume here that {\sc BC} on $\Gamma_0$ is given by the natural boundary condition
\begin{equation}\label{Dirichlet}
\gamma^-(\u)\,\,=\,\,\g_{_D}\,\in\,\mathbf H^{1/2}(\Gamma_0)\,,
\end{equation}
whence \eqref{weak-1} becomes
\begin{equation}\label{weak-1-Dirichlet}
\int_{\Omega^-} \bsi^{\tt d} : \bta^{\tt d}\,-\,\langle\gamma^-_\bnu(\bta),\bva\rangle_\Gamma
\,+\,\int_{\Omega^-}\u\cdot\bdiv\,\bta\,+\,\int_{\Omega^-} \bch:\bta\,=\,
-\,\langle \gamma^-_\bnu(\bta),\g_{_D}\rangle_{\Gamma_0}
\end{equation}
for each $\,\bta\,\in\, \mathbb H(\bdiv;\Omega^-)$. 

\medskip
Then, we introduce the spaces
\begin{equation}\label{spaces-D}
\mathbf X_D\,:=\,\mathbb H(\bdiv;\Omega^-)\times\mathbf H^{1/2}(\Gamma)\,,
\quad \mathbf Y_D\,:=\, \mathbf L^2(\Omega^-)\times \mathbb L^2_{\tt skew}(\Omega^-) \,,
\end{equation}
endowed with the product norms, and let $\mathbf a_D\,:\, \X_D\times\X_D \to \R$ 
and $\mathbf b_D \,:\,\X_D \times \Y_D \to \R$ be the bilinear forms given by
\begin{equation}\label{a-D}
\mathbf a_D((\bsi,\bva),(\bta,\bps))\,\,:=\,\,\int_{\Omega^-} \bsi^{\tt d} : \bta^{\tt d}
\,-\,\langle\gamma^-_\bnu(\bta),\bva\rangle_\Gamma
\quad\forall\,((\bsi,\bva),(\bta,\bps))\,\in\,\X_D\times\X_D\,,
\end{equation}
and
\begin{equation}\label{b-D}
\mathbf b_D((\bta,\bps),(\bv,\bet))\,:=\,
\int_{\Omega^-} \bv \cdot \bdiv\,\bta\,+\,\int_{\Omega^-} \bet : \bta \qquad
\forall\,((\bta,\bps),(\bv,\bet))\,\in\,\X_D\times \Y_D\,.
\end{equation}
Also, we let $\F_D:\X_D\to \R$ and $\G_D:\Y'_D\to \R$ be the linear functionals given by the
right hand side of \eqref{weak-1-Dirichlet} and \eqref{weak-2}, respectively, that is
\begin{equation}\label{F-D}
\F_D(\bta,\bps)\,:=\,-\,\langle \gamma^-_\bnu(\bta),\g_{_D}\rangle_{\Gamma_0}
\qquad
\mbox{and}
\qquad
\G_D(\bv,\bet)\,:=\,-\,\int_{\Omega^-}\f\cdot\bv.
\end{equation}
Then, collecting \eqref{weak-1-Dirichlet} and \eqref{weak-2}, we find that
the dual-mixed formulation in $\Omega^-$ can be stated as: 
Find $((\bsi,\bva),(\u,\bch))\,\in\,\X_D\times\Y_D$ such that
\begin{equation}\label{dual-mixed-formulation-Dirichlet}
\begin{array}{rcl}
\mathbf a_D((\bsi,\bva),(\bta,\bps))\,+\,\mathbf b_D((\bta,\bps),(\u,\bch)) &=& \F_D(\bta,\bps)
\qquad\forall\,(\bta,\bps)\in \X_D\,,\\[2ex]
\mathbf b_D((\bsi,\bva),(\bv,\bet)) &=& \G_D(\bv,\bet)
\qquad\forall\,(\bv,\bet)\in \Y_D\,.
\end{array}
\end{equation}
Note, however, that the above is clearly an incomplete variational formulation since
it  actually concerns four unknowns satisfying only three independent equations. In
other words,  though the bilinear forms $\a_D$ and $\b_D$ are originally defined in
$\X_D \times \X_D$ (cf. \eqref{a-D}) and $\X_D\times\Y_D$ (cf. \eqref{b-D}),
respectively, the first equation in  \eqref{dual-mixed-formulation-Dirichlet} does not
really involve the test function $\bps$. In Sections \ref{section4} and
\ref{section5} below we complete this formulation through the application of  the
boundary integral equation method in the unbounded exterior domain $\Omega^+$.
\subsection{Non-homogeneous Neumann boundary condition on $\Gamma_0$}\label{section312}
We assume now that {\sc BC} on $\Gamma_0$ is given by the essential boundary condition
\begin{equation}\label{Neumann}
\gamma^-_\bnu(\bsi)\,\,=\,\,\g_{_N}\,\in\,\mathbf H^{-1/2}(\Gamma_0)\,,
\end{equation}
which is imposed weakly as
\begin{equation}\label{weak-Neumann}
\langle \gamma^-_\bnu(\bsi),\bxi\rangle_{\Gamma_0}\,\,=\,\,\langle \g_{_N},\bxi\rangle_{\Gamma_0}\qquad\forall\,
\bxi\,\in\,\mathbf H^{1/2}(\Gamma_0)\,.
\end{equation}
Then, introducing the further unknown
\begin{equation}\label{lambda}
\bla\,:=\,\gamma^-(\u)\,\in\,\mathbf H^{1/2}(\Gamma_0)\,,
\end{equation}
we find that \eqref{weak-1} becomes
\begin{equation}\label{weak-1-Neumann}
\int_{\Omega^-} \bsi^{\tt d} : \bta^{\tt d}\,-\,\langle\gamma^-_\bnu(\bta),\bva\rangle_\Gamma
\,+\,\int_{\Omega^-}\u\cdot\bdiv\,\bta\,+\,\int_{\Omega^-} \bch:\bta\,+\,
\langle \gamma^-_\bnu(\bta),\bla\rangle_{\Gamma_0}\,=\,0
\end{equation}
for each $\,\bta\,\in\, \mathbb H(\bdiv;\Omega^-)$,
whence $\bla$ constitutes the Lagrange multiplier associated with \eqref{weak-Neumann}.

\medskip
Next, we let $\X_N\,=\,\X_D$ (cf. \eqref{spaces-D}), $\a_N\,=\,\mathbf a_D$ (cf. \eqref{a-D}), 
define the space 
\begin{equation}\label{spaces-N}
\mathbf Y_N\,:=\, \mathbf L^2(\Omega^-)\times \mathbb L^2_{\tt skew}(\Omega^-) \times \mathbf H^{1/2}(\Gamma_0)\,,
\end{equation}
endowed with the product norm, and introduce the bilinear form $\mathbf b_N \,:\,\X_N \times \Y_N \to \R$ given by
\begin{equation}\label{b-N}
\mathbf b_N((\bta,\bps),(\bv,\bet,\bxi))\,:=\,
\int_{\Omega^-} \bv \cdot \bdiv\,\bta\,+\,\int_{\Omega^-} \bet : \bta \,+\, 
\langle \gamma^-_\bnu(\bta),\bxi\rangle_{\Gamma_0}
\end{equation}
for each $\,((\bta,\bps),(\bv,\bet,\bxi))\,\in\,\X_N\times \Y_N$.
Also, we let $\F_N:\X_N\to \R$ and $\G_N:\Y_N\to \R$ be the linear functionals given by
\begin{equation}\label{F-N}
\F_N(\bta,\bps)\,:=\,0 
\qquad
\mbox{and}
\qquad
\G_N(\bv,\bet,\bxi)\,:=\,-\,\int_{\Omega^-}\f\cdot\bv
\,+\, \langle \g_{_N},\bxi\rangle_{\Gamma_0}.
\end{equation}
Then, collecting \eqref{weak-1-Neumann}, \eqref{weak-Neumann}, and \eqref{weak-2},
we find that the dual-mixed formulation in $\Omega^-$ can be stated as: 
Find $((\bsi,\bva),(\u,\bch,\bla))\,\in\,\X_N\times\Y_N$ such that
\begin{equation}\label{dual-mixed-formulation-Neumann}
\begin{array}{rcl}
\mathbf a_N((\bsi,\bva),(\bta,\bps))\,+\,\mathbf b_N((\bta,\bps),(\u,\bch,\bla)) &=& \F_N(\bta,\bps)
\qquad\forall\,(\bta,\bps)\in \X_N\,,\\[2ex]
\mathbf b_N((\bsi,\bva),(\bv,\bet,\bxi)) &=& \G_N(\bv,\bet,\bxi)
\qquad\forall\,(\bv,\bet,\bxi)\in \Y_N\,.
\end{array}
\end{equation}

\subsection{Homogeneous Neumann boundary condition on $\Gamma_0$}\label{section313}
When the Neumann boundary condition \eqref{Neumann} is homogeneous, 
that is if $\g_{_N}\,=\,\0$, then there is no need
of including the additional unknown $\bla$ (cf. \eqref{lambda}). 
In fact, we just introduce the space
\begin{equation}\label{hbdiv-0}
\mathbb H_0(\bdiv;\Omega^-)\,:=\,\Big\{\,\bta\,\in\,\mathbb H(\bdiv;\Omega^-): \quad
\gamma^-_\bnu(\bta)\,=\,\0 \qon \Gamma_0 \,\Big\}\,,
\end{equation}
and redefine $\X_N$, $\Y_N$, and $\b_N$, respectively, as
\begin{equation}\label{spaces-N-0}
\mathbf X_N\,:=\,\mathbb H_0(\bdiv;\Omega^-)\times\mathbf H^{1/2}(\Gamma)\,,
\quad \mathbf Y_N\,:=\, \mathbf L^2(\Omega^-)\times \mathbb L^2_{\tt skew}(\Omega^-) \,,
\end{equation}
and
\begin{equation}\label{b-N-0}
\mathbf b_N((\bta,\bps),(\bv,\bet))\,:=\,
\int_{\Omega^-} \bv \cdot \bdiv\,\bta\,+\,\int_{\Omega^-} \bet : \bta
\qquad\forall\,((\bta,\bps),(\bv,\bet))\,\in\,\X_N\times \Y_N\,.
\end{equation}
Then, letting $\F_N:\X_N\to \R$ and $\G_N:\Y_N\to \R$ be the linear functionals given by
\begin{equation}\label{F-N-0}
\F_N(\bta,\bps)\,:=\,0 
\qquad
\mbox{and}
\qquad
\G_N(\bv,\bet)\,:=\,-\,\int_{\Omega^-}\f\cdot\bv,
\end{equation}
we find in this case that the dual-mixed formulation in $\Omega^-$ becomes: 
Find $((\bsi,\bva),(\u,\bch))\,\in\,\X_N\times\Y_N$ such that
\begin{equation}\label{dual-mixed-formulation-Neumann-0}
\begin{array}{rcl}
\mathbf a_N((\bsi,\bva),(\bta,\bps))\,+\,\mathbf b_N((\bta,\bps),(\u,\bch)) &=& \F_N(\bta,\bps)
\qquad\forall\,(\bta,\bps)\in \X_N\,,\\[2ex]
\mathbf b_N((\bsi,\bva),(\bv,\bet)) &=& \G_N(\bv,\bet)
\qquad\forall\,(\bv,\bet)\in \Y_N\,.
\end{array}
\end{equation}
An analogue remark to the one given at the end of Section \ref{section31} is
valid here. In fact, it is clear that \eqref{dual-mixed-formulation-Neumann}
and \eqref{dual-mixed-formulation-Neumann-0} constitute incomplete 
variational formulations since they concern five and four unknowns
satisfying only four and three independent equations, respectively.   Hence,
similarly as we did for \eqref{dual-mixed-formulation-Dirichlet}, we
now announce that \eqref{dual-mixed-formulation-Neumann}
and \eqref{dual-mixed-formulation-Neumann-0} will also be completed in Sections
\ref{section4} and \ref{section5} by applying the boundary integral
equation method in the unbounded exterior domain $\Omega^+$.
\section{The boundary integral equation approach in $\Omega^+$}\label{section4}
We begin by recalling from \eqref{transmission-problem-con-presion} that in $\Omega^+$
there hold the homogeneous Stokes equations with decay conditions at infinity given by
\[
\u(\x)\,=\,O(\|\x\|^{-1}) \qan p(\x)\,=\,O(\|\x\|^{-2})\quad\hbox{as}\quad \|\x\|\to +\infty\,.
\]
Hence, following \cite[Chapter 2]{hw2008}, our aim in this section is to
apply the Green's representation formulae to express
the velocity $\u$ and pressure $p$ of the fluid in $\Omega^+$ in terms of the Cauchy 
data on $\Gamma$. For this purpose, we first let $E$ and $Q$ be the fundamental velocity
tensor and its associated pressure vector, respectively, which, using
that $\mu = 1/2$, become (see \cite[eq. (2.3.10)]{hw2008}):
\begin{equation}\label{E}
E(\x,\y)\,:=\,\frac{1}{4\,\pi}\,\left\{\,
\frac{1}{\|\x-\y\|}\,\I\,+\,\frac{(\x-\y)\,(\x-\y)^{\tt t}}{\|\x-\y\|^3}\,\right\}
\qquad\forall\,\x\not=\y\,,
\end{equation}
and
\begin{equation}\label{Q}
Q(\x,\y)\,:=\,\frac{1}{4\,\pi}\,\nabla_\y\left(\frac{1}{\|\x-\y\|}\right)
\qquad\forall\,\x\not=\y\,.
\end{equation}
In addition, we let $(\S,\D)$ and $(\Phi,\Pi)$ be the pairs of simple and double layer 
hydrodynamic potentials for the velocity and the pressure, respectively, that is
\begin{equation}\label{S}
\S\,\brh(\x)\,:=\,\int_\Gamma E(\x,\y)\,\brh(\y)\,d\s_\y\qquad\forall\,\x\,\not\in\,\Gamma\,, \quad
\forall\,\brh\,\in\,\mathbf H^{-1/2}(\Gamma)\,,
\end{equation}
\begin{equation}\label{D}
\begin{array}{c}
\disp
\D\,\bps\,:=\,\left(\begin{array}{c}
\D_1\,\bps\\[1ex] \D_2\,\bps\\[1ex] \D_3\,\bps\end{array}\right)\,, \quad
\D_i\,\bps(\x)\,:=\, \int_\Gamma\Big\{\Xi[E_i(\x,\cdot),-Q_i(\x,\cdot)](\y)\,\bnu(\y)\Big\}^{\tt t}\,\bps(\y)\,d\s_\y\\[2ex]
\disp
\forall\,\x\,\not\in\,\Gamma\,, \qquad \forall\,\bps\,\in\,\mathbf H^{1/2}(\Gamma)\,,
\end{array}
\end{equation}
where $E_i(\x,\y)$ is the $i$-th column of $E(\x,\y)$ and $Q_i(\x,\y)$ is the $i$-th component of $Q(\x,\y)$,
\begin{equation}\label{Phi}
\Phi\,\brh(\x)\,:=\,\int_\Gamma Q(\x,\y)\cdot\brh(\y)\,d\s_\y\qquad\forall\,\x\,\not\in\,\Gamma\,, \quad
\forall\,\brh\,\in\,\mathbf H^{-1/2}(\Gamma)\,,
\end{equation}
and
\begin{equation}\label{Pi}
\Pi\,\bps(\x)\,:=\,\int_\Gamma \nabla_\y\,Q(\x,\y)\,\bnu(\y)\cdot\bps(\y)\,d\s_\y\qquad
\forall\,\x\,\not\in\,\Gamma\,, \qquad \forall\,\bps\,\in\,\mathbf H^{1/2}(\Gamma)\,.
\end{equation}

\medskip
It is important to recall here that, for each $\brh\,\in\,\mathbf H^{-1/2}(\Gamma)$ 
and for each $\bps\,\in\,\mathbf H^{1/2}(\Gamma)$, the velocity/pressure pairs $(\S\,\brh,\Phi\,\brh)$ 
and $(\D\,\bps,\Pi\,\bps)$ satisfy the homogeneous Stokes equations in $\R^3 \backslash \Gamma$.
In addition, the main continuity properties of $\S$, $\D$, $\Phi$, and $\Pi$  are summarized 
in the following lemma.
\begin{lemma}\label{properties-1}
The hydrodynamic potentials define the following bounded linear operators:
\[
\begin{array}{lcl}
\S \,:\, \mathbf H^{-1/2}(\Gamma) &\to& \mathbf H^1_{\mathrm{div}}(\Omega^-;\Delta)
\times \mathbf H^1_{\mathrm{div,loc}}(\Omega^+,\Delta)\,,\\[2ex]
\D \,:\, \mathbf H^{1/2}(\Gamma) &\to& \mathbf H^1_{\mathrm{div}}(\Omega^-;\Delta)
\times \mathbf H^1_{\mathrm{div,loc}}(\Omega^+,\Delta)\,,\\[2ex]
\Phi \,:\, \mathbf H^{-1/2}(\Gamma) &\to& L^2(\Omega^-) \times L^2(\Omega^+)\,,\\[2ex]
\Pi \,:\, \mathbf H^{1/2}(\Gamma) &\to& L^2(\Omega^-) \times L^2(\Omega^+)\,,
\end{array}
\]
where
\[
\mathbf H^1_{\mathrm{div}}(\Omega^-;\Delta)\,:=\,\big\{\,\bv\,\in\,\mathbf H^1(\Omega^-):
\quad \div\,\bv\,=\,0 \qin \Omega^- \qan \Delta\,\bv\,\in\,\tilde{\mathbf H}^{-1}_0(\Omega^-)\,\big\}\,,
\]
\[
\mathbf H^1_{\mathrm{div,loc}}(\Omega^+,\Delta)\,:=\,\big\{\,\bv\,\in\,
\mathbf H^1_{\mathrm{loc}}(\Omega^+): \quad \div\,\bv\,=\,0 \qin \Omega^+ \qan 
\Delta\,\bv\,\in\,\big(\mathbf H^1_{\mathrm{loc}}(\Omega^+)\big)'\,\big\}\,,
\]
and $\tilde{\mathbf H}^{-1}_0(\Omega^-)$ is the orthogonal complement in $\big(\mathbf H^1(\Omega^-)\big)'$
of $\big\{\,\bv\,\in\,\big(\mathbf H^1(\Omega^-)\big)': \quad \mathrm{supp}\,\bv\,\subseteq\,\Gamma
\Big\}$. 
\end{lemma}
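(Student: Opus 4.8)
\medskip\noindent\emph{Proof plan.}\quad The idea is to reduce everything to the classical layer-potential calculus for the Stokes and Laplace operators, as in \cite[Chapter 2]{hw2008}, adding only the behaviour of the fundamental tensors at infinity. First I would record the \emph{local} regularity. By the standard mapping properties of the hydrodynamic single and double layer potentials \cite[Chapter 2]{hw2008}, $\S$ is bounded from $\mathbf H^{-1/2}(\Gamma)$ and $\D$ from $\mathbf H^{1/2}(\Gamma)$ into $\mathbf H^1(B\cap\Omega^-)\times\mathbf H^1(B\cap\Omega^+)$ for every ball $B$; since $\Omega^-$ is bounded, this already gives $\S\brh,\D\bps\in\mathbf H^1(\Omega^-)$ and $\S\brh,\D\bps\in\mathbf H^1_{\mathrm{loc}}(\Omega^+)$, with continuous dependence on the densities. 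For the pressure potentials, a short computation using $\nabla_\y\|\x-\y\|^{-1}=-\nabla_\x\|\x-\y\|^{-1}$ shows from \eqref{Q}--\eqref{Pi} that $\Phi\brh=-\frac{1}{4\pi}\,\div(\mathcal V\brh)$ and $\Pi\bps=-\frac{1}{4\pi}\,\div(\mathcal K\bps)$, where $\mathcal V$ and $\mathcal K$ denote the (vector) harmonic single and double layer potentials on $\Gamma$; since $\mathcal V:\mathbf H^{-1/2}(\Gamma)\to\mathbf H^1(\Omega^-)$ and $\mathcal K:\mathbf H^{1/2}(\Gamma)\to\mathbf H^1(\Omega^-)$ are bounded (again \cite[Chapter 2]{hw2008}), taking divergences yields $\Phi\brh,\Pi\bps\in L^2(\Omega^-)$ and, likewise, $\Phi\brh,\Pi\bps\in L^2(B\cap\Omega^+)$ for every ball $B$, with the corresponding bounds.

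Next I would use the fact --- recalled just before the statement --- that $(\S\brh,\Phi\brh)$ and $(\D\bps,\Pi\bps)$ solve the homogeneous Stokes system in $\R^3\setminus\Gamma$. This gives at once $\div(\S\brh)=\div(\D\bps)=0$ in $\Omega^-\cup\Omega^+$ and the distributional identities $\Delta(\S\brh)=2\,\nabla(\Phi\brh)$ and $\Delta(\D\bps)=2\,\nabla(\Pi\bps)$ in $\Omega^-$ and in $\Omega^+$. Since $\Phi\brh,\Pi\bps\in L^2(\Omega^-)$ by the previous step, the Laplacians of the velocity potentials are gradients of $L^2(\Omega^-)$ functions and therefore define, in a natural way, elements of $\tilde{\mathbf H}^{-1}_0(\Omega^-)$ --- the space designed precisely to accommodate such objects --- with norms controlled by $\|\Phi\brh\|_{0,\Omega^-}$ and $\|\Pi\bps\|_{0,\Omega^-}$, respectively; in $\Omega^+$ the analogous local membership is immediate from the same identities and the local $\mathbf H^1$-regularity. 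This settles the first two items: $\S\brh$ and $\D\bps$ belong continuously to $\mathbf H^1_{\mathrm{div}}(\Omega^-;\Delta)\times\mathbf H^1_{\mathrm{div,loc}}(\Omega^+,\Delta)$.

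It then remains to upgrade the local $L^2$-bounds in $\Omega^+$ for the pressure potentials to global ones (no such upgrade is needed, nor is it available, for the velocity potentials, whose exterior target is only the local space, the Stokes single-layer velocity decaying merely like $\|\x\|^{-1}$). Using the explicit kernels \eqref{E}--\eqref{Pi} and the boundedness of $\Gamma$, for $\|\x\|$ large and $\y\in\Gamma$ one has $|Q(\x,\y)|=O(\|\x\|^{-2})$ and $|\nabla_\y Q(\x,\y)|=O(\|\x\|^{-3})$, uniformly in $\y$, whence $\Phi\brh(\x)=O(\|\x\|^{-2})$ and $\Pi\bps(\x)=O(\|\x\|^{-3})$ as $\|\x\|\to+\infty$, with constants bounded by $\|\brh\|_{-1/2,\Gamma}$ and $\|\bps\|_{1/2,\Gamma}$. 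Since $\|\x\|^{-4}$ and $\|\x\|^{-6}$ are integrable at infinity in $\R^3$, splitting $\Omega^+$ into a bounded part (handled in the first step) and a neighborhood of infinity gives $\Phi\brh,\Pi\bps\in L^2(\Omega^+)$ together with the asserted continuity, which would finish the proof.

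The one genuinely delicate point, in my view, is the correct identification of $\Delta(\S\brh)$ and $\Delta(\D\bps)$ as elements of $\tilde{\mathbf H}^{-1}_0(\Omega^-)$: one must check that the distribution produced by the Stokes equation inside the open set $\Omega^-$ does extend to a bounded functional on $\mathbf H^1(\Omega^-)$ lying in $\tilde{\mathbf H}^{-1}_0(\Omega^-)$, i.e.\ that no spurious layer contribution supported on $\Gamma$ --- coming from the jump of the potential across $\Gamma$ --- is introduced. This is exactly what the orthogonality in the definition of $\tilde{\mathbf H}^{-1}_0(\Omega^-)$ is meant to exclude, and it would be handled through the $L^2(\Omega^-)$-bound on the pressure together with a density argument. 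Everything else is routine: the classical layer-potential mapping properties of \cite[Chapter 2]{hw2008} and the elementary decay estimates above.
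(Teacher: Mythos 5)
Your plan is correct, and it essentially reconstructs in detail what the paper disposes of by citation: the paper's proof is one line, invoking the analogue continuity properties for the Lam\'e system and the Laplacian via \cite[Lemmas 5.6.4 and 5.6.6]{hw2008}, \cite[Theorem 3.3]{kw2006}, and alternatively \cite{SelgasFJS}. Your route is the same in spirit (Stokes layer potentials handled by the classical theory, pressure potentials reduced to the Laplacian), but you make the reduction explicit -- writing $\Phi\brh$ and $\Pi\bps$ as divergences of harmonic single and double layer potentials, using the Stokes system to write $\Delta(\S\brh)=2\nabla(\Phi\brh)$, $\Delta(\D\bps)=2\nabla(\Pi\bps)$, and supplying the $O(\|\x\|^{-2})$, $O(\|\x\|^{-3})$ decay of $Q$ and $\nabla_\y Q$ to get the \emph{global} $L^2(\Omega^+)$ bounds that the statement requires for $\Phi$ and $\Pi$ (and correctly observing that no such bound is needed for $\S$, $\D$, whose exterior target is only the local space). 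The one step you flag as delicate -- membership of the Laplacians in $\tilde{\mathbf H}^{-1}_0(\Omega^-)$ -- is exactly the point the paper outsources to \cite{kw2006} and \cite{SelgasFJS}; your sketch is sound there, since the functional $\w\mapsto -2\int_{\Omega^-}\Phi\brh\,\div\w$ is a bounded extension of the distribution $2\nabla(\Phi\brh)$ to $\mathbf H^1(\Omega^-)$ with norm controlled by $\|\Phi\brh\|_{0,\Omega^-}$, and projecting onto the orthogonal complement of the $\Gamma$-supported functionals yields the canonical representative without altering the restriction to $\Omega^-$; it would be worth saying this explicitly rather than appealing to a vague density argument. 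So: correct, more self-contained than the paper's proof, at the cost of rederiving facts the paper simply cites.
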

\begin{proof}
It follows by combining analogue continuity properties for the Lam\'e system and the Laplacian. 
We refer to \cite[Lemmas 5.6.4 and 5.6.6]{hw2008} and \cite[Theorem 3.3]{kw2006} for details. Alternatively, \cite{SelgasFJS} contains similar boundedness statements using weighted Sobolev spaces.
\end{proof}

\bigskip
We now let $\V$, $\K$, $\K^{\tt t}$, and $\W$ be the boundary integral operators of
the simple, double, adjoint of the double, and hypersingular layer hydrodynamic potentials,
respectively.  These operators can be defined using lateral traces of the single and double layer hydrodynamic potentials (see  \cite[Lemma 5.6.5]{hw2008} and \cite[Sections 5 and 6]{SelgasFJS}:
\begin{equation}\label{trace-1}
\gamma^+(\S\,\brh)\,=\,\gamma^-(\S\,\brh)\,=\,\V\,\brh\qquad\forall\,\brh\,\in\,\mathbf H^{-1/2}(\Gamma)\,,
\end{equation}
\begin{equation}\label{trace-2}
\gamma^\pm(\D\,\bps)\,=\,\left(\pm\,\frac12\,\I\,+\,\K\right)\,\bps \qquad\forall\,\bps\,\in\,\mathbf H^{1/2}(\Gamma)\,,
\end{equation}
\begin{equation}\label{trace-3}
\gamma^\pm_\bnu\big(\Xi[\S\,\brh,\Phi\,\brh]\big)\,=\,\left(\mp\,\frac12\,\I\,+\,\K^{\tt t}\right)\,\brh
\qquad\forall\,\brh\,\in\,\mathbf H^{-1/2}(\Gamma)\,,
\end{equation}
\begin{equation}\label{trace-4}
\gamma^+_\bnu\big(\Xi[\D\,\bps,\Pi\,\bps]\big)\,=\,\gamma^-_\bnu\big(\Xi[\D\,\bps,\Pi\,\bps]\big)
\,=\,-\,\W\,\bps \qquad\forall\,\bps\,\in\,\mathbf H^{1/2}(\Gamma)\,.
\end{equation}
As a consequence of \eqref{trace-1} - \eqref{trace-4} the following jump conditions hold:
\begin{equation}\label{jump-1}
[\gamma (\S\,\brh)]\,:=\,\gamma^-(\S\,\brh) \,-\, \gamma^+(\S\,\brh)\,=\,\0 
\qquad\forall\,\brh\,\in\,\mathbf H^{-1/2}(\Gamma)\,,\\[1.5ex]
\end{equation}
\begin{equation}\label{jump-2}
[\gamma (\D\,\bps)]\,:=\,\gamma^-(\D\,\bps) \,-\, \gamma^+(\D\,\bps)\,=\,-\,\bps
\qquad\forall\,\bps\,\in\,\mathbf H^{1/2}(\Gamma)\,,\\[1.5ex]
\end{equation}
\begin{equation}\label{jump-3}
[\gamma_\bnu\big(\Xi[\S\,\brh,\Phi\,\brh]\big)\,:=\,\gamma^-_\bnu\big(\Xi[\S\,\brh,\Phi\,\brh]\big)
\,-\,\gamma^+_\bnu\big(\Xi[\S\,\brh,\Phi\,\brh]\big)\,=\,\brh
\qquad\forall\,\brh\,\in\,\mathbf H^{-1/2}(\Gamma)\,,\\[1.5ex]
\end{equation}
\begin{equation}\label{jump-4}
[\gamma_\bnu\big(\Xi[\D\,\bps,\Pi\,\bps]\big)\,:=\,\gamma^-_\bnu\big(\Xi[\D\,\bps,\Pi\,\bps]\big)
\,-\,\gamma^+_\bnu\big(\Xi[\D\,\bps,\Pi\,\bps]\big)\,=\,\0
\qquad\forall\,\bps\,\in\,\mathbf H^{1/2}(\Gamma)\,.
\end{equation}
Integral expressions for the above boundary integral operators can be found in the literature. For smooth enough densities $\brh$ and almost everywhere on $\Gamma$ we can write \cite[(2.3.15)]{hw2008}
\begin{equation}\label{V}
\V\,\brh(\x)\,=\,\S\,\brh(\x)\,:=\,\int_\Gamma E(\x,\y)\,\brh(\y)\,d\s_\y.
\end{equation}
An integral expression for $\K$ can be found in \cite[(2.3.30)]{hw2008}
\begin{equation}\label{K}
\K\,\bps(\x)\,:=\,\frac{3}{4\,\pi}\,\mathrm{p.v.}\int_{\Gamma}
\frac{\big((\x-\y)\cdot\bnu(\y)\big)\,\big((\x-\y)\cdot\bps(\y)\big)}{\|\x-\y\|^5}\,(\x-\y)\,d\s_\y.
\end{equation}
This integral formula involves a Cauchy principal value.
For the explicit integral form of $\W$ we refer to \cite[eqs. (2.3.30) and (2.3.31)]{hw2008}.

\medskip
Furthermore, some of the main properties of $\V$, $\K$, $\K^{\tt t}$, and $\W$ are collected next. 
To this end, given $\mathcal O \,\subseteq\,\R^3$ and $\ell\in\mathbb N\cup\{0\}$, we now let 
$P_\ell(\mathcal O)$ be the space of polynomials of degree $\le \ell$ on $\mathcal O$, and let
$\mathbf{RM}(\mathcal O)$ be the space of 
rigid motions in $\mathcal O$, that is
\[
\mathbf{RM}(\mathcal O)\,:=\,\big\{\,\mathbf z: \quad \mathbf z(\x)\,=\,
\mathbf c\,+\,\mathbf d\times \x\quad\forall\,\x\in\mathcal O \,; \quad 
\mathbf c,\,\mathbf d\in\R^3\big\}\,.
\]
We also introduce the vector field $\mathbf m:\Gamma \to \R^3$ given by 
\[
\mathbf m(\x):=\x - \frac1{|\Gamma|}\int_\Gamma \x \,d\s_\x,
\]
and the spaces
\[
\mathbf H^{-1/2}_0(\Gamma)\,\,:=\Big\{\,\brh\,\in\,\mathbf H^{-1/2}(\Gamma): \quad
\langle \mathbf m,\brh\rangle_\Gamma\,=\,0\Big\}
\]
and
\[
\mathbf H^{1/2}_0(\Gamma)\,\,:=\Big\{\,\bps\,\in\,\mathbf H^{1/2}(\Gamma): \quad
\langle \mathbf r,\bps\rangle_\Gamma\,=\,0\quad\forall\,\mathbf r\,\in\,\mathbf{RM}(\Gamma)\Big\}.
\]

\begin{lemma}\label{properties-2}
The following boundary integral operators are linear and bounded:
\begin{equation}\label{properties-V-K-W}
\begin{array}{lcl}
\V\,:\, \mathbf H^{-1/2}(\Gamma) &\to& \mathbf H^{1/2}(\Gamma)\,,\\[2ex]
\K\,:\, \mathbf H^{1/2}(\Gamma) &\to& \mathbf H^{1/2}(\Gamma)\,,\\[2ex]
\K^{\tt t}\,:\, \mathbf H^{-1/2}(\Gamma) &\to& \mathbf H^{-1/2}(\Gamma)\,,\\[2ex]
\W\,:\, \mathbf H^{1/2}(\Gamma) &\to& \mathbf H^{-1/2}(\Gamma)\,.
\end{array}
\end{equation}
In addition, $\V$ and $\W$ are selfadjoint, 
\begin{equation}\label{kernels-V-and-W}
\ker\left(\V\right)
\,=\,\ker\left(\frac12\,\I\,-\,\K^{\tt t}\right)
\,=\,P_0(\Gamma)\,\bnu\,, \qquad
\ker\left(\W\right)\,=\,\mathrm{ker}\left(\frac12\,\I +\,\K\right)
\,=\,\mathbf{RM}(\Gamma)\,,
\end{equation}
and there exist $\alpha_1\,, \alpha_2\,>\,0$ such that
\begin{equation}\label{V-eliptico}
\langle \brh,\V\,\brh\rangle_\Gamma 
\,\ge\,\alpha_1\,\|\brh\|^2_{-1/2,\Gamma}\qquad\forall\,\brh\,\in\,\mathbf H^{-1/2}_0(\Gamma)
\end{equation}
and
\begin{equation}\label{W-eliptico}
\langle \W\,\bps,\bps\rangle_\Gamma 
\,\ge\,\alpha_2\,\|\bps\|^2_{1/2,\Gamma}\qquad\forall\,\bps\,\in\,\mathbf H^{1/2}_0(\Gamma)\,.
\end{equation}
\end{lemma}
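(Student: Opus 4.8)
The plan is to establish the four assertions of Lemma~\ref{properties-2} by systematically importing the known mapping, symmetry, kernel, and coercivity properties of the corresponding potentials for the Lam\'e system and the Laplacian, following the strategy already used in Lemma~\ref{properties-1}. First I would recall from \cite[Chapter 5]{hw2008} (see also \cite{kw2006,SelgasFJS}) that the single-layer hydrodynamic operator $\V$ coincides with the lateral trace $\gamma^\pm(\S\,\cdot)$ (cf.\ \eqref{trace-1}), that the double-layer and its adjoint are given by \eqref{trace-2}--\eqref{trace-3}, and that the hypersingular operator $\W$ is the negative of the traction of the double-layer potential (cf.\ \eqref{trace-4}). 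The boundedness statements in \eqref{properties-V-K-W} then follow directly from the mapping properties collected in Lemma~\ref{properties-1} by composing with the bounded trace operators $\gamma^\pm:\mathbf H^1(\mathcal O)\to\mathbf H^{1/2}(\Gamma)$ and $\gamma^\pm_\bnu:\mathbb H(\bdiv;\mathcal O)\to\mathbf H^{-1/2}(\Gamma)$; note that for $\gamma^\pm_\bnu$ one uses that $\Xi[\S\,\brh,\Phi\,\brh]$ and $\Xi[\D\,\bps,\Pi\,\bps]$ lie in $\mathbb H(\bdiv;\Omega^-)$ and $\mathbb H_{\mathrm{loc}}(\bdiv;\Omega^+)$ because of the Stokes equations satisfied by these pairs.

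Next, the selfadjointness of $\V$ and $\W$ is a consequence of the symmetry of the fundamental tensor, $E(\x,\y)=E(\y,\x)^{\tt t}$ (immediate from \eqref{E}), together with an integration-by-parts argument over $\Omega^-\cup\Omega^{++}$ using the first Betti-type formula for the Stokes operator; alternatively one cites \cite[Section 5.6]{hw2008} directly. For the kernel identifications \eqref{kernels-V-and-W} I would argue as follows. A density $\brh\in\ker(\V)$ gives $\gamma^\pm(\S\,\brh)=\0$, so the Stokes extension $(\S\,\brh,\Phi\,\brh)$ solves a homogeneous interior Dirichlet problem in $\Omega^-$ and a homogeneous exterior Dirichlet problem in $\Omega^+$ with the decay at infinity; uniqueness forces $\S\,\brh\equiv\0$ on both sides, and then the jump relation \eqref{jump-3} yields $\brh=[\gamma_\bnu(\Xi[\S\,\brh,\Phi\,\brh])]=\gamma^-_\bnu(\Xi)-\gamma^+_\bnu(\Xi)$, which need not vanish: indeed $\gamma^-_\bnu(\Xi[\0,c])=-c\,\bnu$, so the kernel consists exactly of the normal components of constant pressures, i.e.\ $P_0(\Gamma)\,\bnu$. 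The identity $\ker(\tfrac12\I-\K^{\tt t})=P_0(\Gamma)\,\bnu$ follows from \eqref{trace-3} by the same uniqueness reasoning applied to the exterior problem. Dually, $\ker(\W)$ is obtained from \eqref{trace-4}: if $\W\,\bps=\0$ then $(\D\,\bps,\Pi\,\bps)$ has vanishing traction on both sides, hence is a rigid motion on each component (the null space of the Stokes traction problem), and with the decay condition the exterior piece is $\0$ while the interior piece is an arbitrary rigid motion; the jump \eqref{jump-2} then gives $\bps=-[\gamma(\D\,\bps)]\in\mathbf{RM}(\Gamma)$. The identity $\ker(\tfrac12\I+\K)=\mathbf{RM}(\Gamma)$ follows from \eqref{trace-2} analogously.

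Finally, for the coercivity estimates \eqref{V-eliptico} and \eqref{W-eliptico} I would proceed in the standard G\r{a}rding-plus-injectivity manner. For $\V$: combining the selfadjointness, the compact-perturbation positivity of $\V$ on $\mathbf H^{-1/2}(\Gamma)$ (which holds because the principal part of $E$ is that of the Lam\'e/Laplace single layer, cf.\ \cite{hw2008}), and the kernel characterization just proved, one gets a G\r{a}rding inequality $\langle\brh,\V\,\brh\rangle_\Gamma\ge\alpha\,\|\brh\|^2_{-1/2,\Gamma}-\|\brh\|^2_{*}$ with a compact seminorm; since $\ker(\V)=P_0(\Gamma)\,\bnu$ and $\mathbf H^{-1/2}_0(\Gamma)$ is, by its definition via $\langle\mathbf m,\brh\rangle_\Gamma=0$, a closed complement of $P_0(\Gamma)\,\bnu$ on which $\V$ is injective, a Peetre--Tartar (or Fredholm) argument removes the compact term and yields the claimed ellipticity on $\mathbf H^{-1/2}_0(\Gamma)$. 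The estimate for $\W$ on $\mathbf H^{1/2}_0(\Gamma)$ is entirely dual, using $\ker(\W)=\mathbf{RM}(\Gamma)$ and the fact that $\mathbf H^{1/2}_0(\Gamma)$ annihilates the rigid motions. The main obstacle is precisely this last point: verifying that the positivity-up-to-compact-perturbation property genuinely transfers from the Lam\'e and Laplace kernels to the hydrodynamic kernels $E,Q$, i.e.\ controlling the lower-order terms in the explicit expressions \eqref{E}--\eqref{Q} and \eqref{K}, and confirming that the resulting coercive forms are positive definite on the quotient spaces rather than merely nonnegative; I would handle this by an explicit splitting of $E$ into its Kelvin (Lam\'e) part plus a remainder, invoking the known results for the elasticity single- and double-layer operators and absorbing the remainder as a compact perturbation. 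Everything else is a routine transcription of elliptic potential theory to the Stokes setting.
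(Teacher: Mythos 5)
The paper does not actually reprove this lemma: its ``proof'' is a pointer to \cite[Sections 5 and 6]{SelgasFJS}, where all four statements are established \emph{variationally}, precisely so that only Lipschitz regularity of $\Gamma$ is required. Your reconstruction of the mapping properties \eqref{properties-V-K-W}, of the selfadjointness, and of the kernel identifications \eqref{kernels-V-and-W} is sound and essentially the argument of that reference, up to small omissions: the interior problems must be posed in the full region $\Omega_0\cup\Gamma_0\cup\Omega^-$ enclosed by $\Gamma$ (not in the annulus $\Omega^-$ alone), and the converse inclusions $P_0(\Gamma)\,\bnu\subseteq\ker\V$ and $\mathbf{RM}(\Gamma)\subseteq\ker\W$ still need an argument (apply the Green representation formula to the interior solutions $(\0,1)$ and $(\z,0)$, $\z\in\mathbf{RM}$), which is routine.

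The genuine gap is in your plan for \eqref{V-eliptico} and \eqref{W-eliptico}. You propose a G\r arding inequality obtained by splitting $E$ into the Kelvin (Lam\'e) tensor plus a remainder ``absorbed as a compact perturbation''. That step fails: for any finite Lam\'e parameter $\lambda$ the ratio between the coefficients of $(\x-\y)(\x-\y)^{\tt t}/\|\x-\y\|^3$ and of $\I/\|\x-\y\|$ in the Kelvin tensor is $(\lambda+\mu)/(\lambda+3\mu)<1$, whereas for the Stokeslet \eqref{E} it equals $1$; hence the difference of the two single-layer kernels is again homogeneous of degree $-1$, i.e.\ it induces a boundary operator of the \emph{same} order $-1$ as $\V$ itself, which is not compact from $\mathbf H^{-1/2}(\Gamma)$ to $\mathbf H^{1/2}(\Gamma)$ (there is no cancellation of the kind that makes, say, the harmonic double layer compact on smooth surfaces --- and even that sort of compactness is unavailable here because $\Gamma$ is only Lipschitz, which is the raison d'\^etre of the whole paper). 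More generally, any symbol-based G\r arding argument presupposes smoothness of $\Gamma$. The proof that works at Lipschitz regularity --- and the one behind the paper's citation --- is variational: one shows $\langle\brh,\V\brh\rangle_\Gamma=\|\e(\S\,\brh)\|^2_{0,\R^3\backslash\Gamma}$ and $\langle\W\bps,\bps\rangle_\Gamma=\|\e(\D\,\bps)\|^2_{0,\R^3\backslash\Gamma}$ (the latter is exactly Lemma \ref{W-psi-psi} of the paper), and then bounds $\|\brh\|_{-1/2,\Gamma}$, respectively $\|\bps\|_{1/2,\Gamma}$, by these energies on the complements $\mathbf H^{-1/2}_0(\Gamma)$ and $\mathbf H^{1/2}_0(\Gamma)$, using Korn-type inequalities in (weighted) Sobolev spaces and, crucially, an inf-sup/Bogovskii-type argument to recover the pressure $\Phi\,\brh$, resp.\ $\Pi\,\bps$, from the strain --- this is precisely where the kernels $P_0(\Gamma)\,\bnu$ and $\mathbf{RM}(\Gamma)$ emerge. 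Without this (or an equivalent substitute) your coercivity step is unjustified; with it, your outline reduces to the proof in \cite{SelgasFJS}.
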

\begin{proof}
The proofs of these results appear in \cite[Sections 5 and 6]{SelgasFJS}.
\end{proof}

\medskip
Note that the decompositions 
\begin{equation}\label{decompositions}
\mathbf H^{-1/2}(\Gamma)\,=\,\mathbf H^{-1/2}_0(\Gamma)\,\oplus\,{P}_0(\Gamma)\,\bnu \qan
\mathbf H^{1/2}(\Gamma)\,=\,\mathbf H^{1/2}_0(\Gamma)\,\oplus\,\mathbf{RM}(\Gamma)\,,
\end{equation}
are stable and have associated oblique projectors $\boldsymbol\pi_{_\nu} : \mathbf H^{-1/2}_0(\Gamma) \to P_0(\Gamma)\bnu$ and $\boldsymbol\pi_{_{RM}} :\mathbf H^{1/2}(\Gamma) \to \mathbf{RM}(\Gamma)$. Therefore, 
the inequalities \eqref{V-eliptico} and \eqref{W-eliptico} are equivalent to
\begin{equation}\label{V-eliptico-equiv}
\langle \brh,\V\,\brh\rangle_\Gamma 
\,\ge\,\tilde\alpha_1\,\|\brh-\boldsymbol\pi_{_\nu} \brh\|^2_{-1/2,\Gamma}
\qquad\forall\,\brh\,\in\,\mathbf H^{-1/2}(\Gamma)
\end{equation}
and
\begin{equation}\label{W-eliptico-equiv}
\langle \W\,\bps,\bps\rangle_\Gamma 
\,\ge\,\tilde\alpha_2\,\|\bps-\boldsymbol\pi_{_{RM}}\bps\|^2_{1/2,\Gamma}\qquad\forall\,\bps\,\in\,\mathbf H^{1/2}(\Gamma).
\end{equation}
As a simple consequence of \eqref{kernels-V-and-W} we can prove that
\begin{equation}\label{new4.24}
\langle\bnu,\mathbf r\rangle_\Gamma
	=\left\langle \left( \frac12\I+\K^{\tt t}\right)\bnu,\mathbf r\right\rangle
	=\left\langle \bnu,\left( \frac12\I+\K\right)\mathbf r\right\rangle=0\qquad\forall\mathbf r\in \mathbf{RM}(\Gamma).
\end{equation}

\bigskip
On the other hand, we have the following technical result.
\begin{lemma}\label{W-psi-psi}
There holds
\begin{equation}\label{eq-W-psi-psi}
\langle \W\,\bps,\bps\rangle_\Gamma\,=\,\|\e(\D\,\bps)\|^2_{0,\R^3\backslash \Gamma}
\qquad\forall\,\bps\,\in\,\mathbf H^{1/2}(\Gamma)\,.
\end{equation}
\end{lemma}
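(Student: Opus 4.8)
The plan is to derive the identity from the defining trace property \eqref{trace-4} of the hypersingular operator together with an integration by parts (Green's first identity for the Stokes system) applied separately in $\Omega^-$ and in $\Omega^+$. First I would set $\bv := \D\,\bps$ and note that, by the remarks following \eqref{Pi}, the pair $(\bv,\Pi\,\bps)$ solves the homogeneous Stokes equations in $\R^3\backslash\Gamma$; in particular $\bdiv\,\bsi_\bps = \0$ in $\Omega^-\cup\Omega^+$ where $\bsi_\bps := \Xi[\D\,\bps,\Pi\,\bps]$, and $\div\,\bv = 0$ there. The key algebraic observation, using $\mu=1/2$ and $\tr\,\e(\bv)=\div\,\bv=0$, is the pointwise identity $\bsi_\bps : \e(\bv) = \e(\bv):\e(\bv) - (\Pi\,\bps)\,\div\,\bv = |\e(\bv)|^2$, so that the volume term we want on the right-hand side of \eqref{eq-W-psi-psi} is exactly $\int_{\R^3\backslash\Gamma}\bsi_\bps:\e(\bv)$.

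Next I would integrate by parts in each subdomain. In $\Omega^-$, Green's identity gives $\int_{\Omega^-}\bsi_\bps:\e(\bv) = \langle\gamma^-_\bnu(\bsi_\bps),\gamma^-(\bv)\rangle_{\partial\Omega^-} - \int_{\Omega^-}\bdiv\,\bsi_\bps\cdot\bv$, and since $\D\,\bps$ and its conormal derivative are supported away from (actually, vanish on the far boundary — here one uses that $\D\,\bps$ is harmless on $\Gamma_0$, i.e. the potential and its stress are smooth there and we may take the integration over all of $\Omega^-$, or more cleanly enlarge $\Omega^-$), the only surviving boundary contribution is on $\Gamma$. The divergence term vanishes because $\bdiv\,\bsi_\bps=\0$. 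In $\Omega^+$ one does the same over the truncated region $\Omega^+\cap B_R$ and lets $R\to\infty$; the decay of the hydrodynamic potentials (velocity $O(\|\x\|^{-1})$, stress $O(\|\x\|^{-2})$, consistent with the decay conditions in \eqref{transmission-problem-con-presion}) kills the contribution on the sphere of radius $R$, leaving only the $\Gamma$-term with the opposite outward normal. Adding the two contributions and keeping track of the orientation of $\bnu$ (which points into $\Omega^+$ on $\Gamma$), one obtains
\[
\int_{\R^3\backslash\Gamma}\bsi_\bps:\e(\bv)
=\langle\gamma^-_\bnu(\bsi_\bps)-\gamma^+_\bnu(\bsi_\bps),\,\gamma(\D\,\bps)\rangle_\Gamma
-\langle\gamma^+_\bnu(\bsi_\bps),\,[\gamma(\D\,\bps)]\rangle_\Gamma,
\]
where I have regrouped so as to expose the jump $[\gamma(\D\,\bps)]$. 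By \eqref{jump-4} the conormal jump vanishes, so the first bracket is $0$; by \eqref{jump-2} the trace jump is $-\bps$, and by \eqref{trace-4} $\gamma^+_\bnu(\bsi_\bps)=-\W\,\bps$. Substituting gives $\int_{\R^3\backslash\Gamma}|\e(\D\,\bps)|^2 = -\langle -\W\,\bps,\,-\bps\rangle_\Gamma \cdot(-1)$ — up to the sign bookkeeping this collapses to $\langle\W\,\bps,\bps\rangle_\Gamma$, which is \eqref{eq-W-psi-psi}.

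The main obstacle is making the integration by parts rigorous at the low regularity available: $\D\,\bps$ lies only in $\mathbf H^1_{\mathrm{div}}(\Omega^-;\Delta)\times\mathbf H^1_{\mathrm{div,loc}}(\Omega^+,\Delta)$ (Lemma \ref{properties-1}), so Green's formula for the Stokes operator must be invoked in its weak form, with the conormal derivative $\gamma^\pm_\bnu(\Xi[\D\,\bps,\Pi\,\bps])$ understood as the element of $\mathbf H^{-1/2}(\Gamma)$ guaranteed by that mapping property, and the duality pairings interpreted accordingly; one also needs the density of smooth functions to justify the pointwise manipulation $\bsi_\bps:\e(\bv)=|\e(\bv)|^2$ inside the integral. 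The behaviour at infinity requires the sharp decay of $E$, $Q$ and their derivatives so that the surface integral on $\partial B_R$ tends to zero — this is standard for the Stokeslet but should be cited (e.g. from \cite{hw2008} or \cite{SelgasFJS}). I would handle both issues by first establishing the identity for $\bps$ smooth, where every step is classical, and then passing to the limit using the boundedness of $\W:\mathbf H^{1/2}(\Gamma)\to\mathbf H^{-1/2}(\Gamma)$ (Lemma \ref{properties-2}) and of $\bps\mapsto\e(\D\,\bps)$ in $\mathbb L^2(\R^3\backslash\Gamma)$ (again Lemma \ref{properties-1}), together with the density of $\mathbf H^{1/2}$-smooth densities in $\mathbf H^{1/2}(\Gamma)$.
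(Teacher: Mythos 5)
Your argument is essentially the paper's proof run in the opposite direction: the paper starts from $\langle \W\,\bps,\bps\rangle_\Gamma$, rewrites it via \eqref{trace-4} and \eqref{jump-2} as the difference of the two one-sided pairings, and then integrates by parts in $\Omega_0\cup\Omega^-$ and in $\Omega^+$, using the homogeneous Stokes equations, incompressibility of $\D\,\bps$, and the behaviour at infinity exactly as you do, so the proposal is correct in substance. Two bookkeeping points should be fixed. First, the interior integration by parts must be carried out on the whole interior of $\Gamma$ (the paper's $\Omega:=\Omega_0\cup\Omega^-$): the potential and its stress do not vanish on $\Gamma_0$, they are merely smooth across it, so the correct reason no $\Gamma_0$-term appears is that $\Gamma_0$ is not a boundary of the enlarged domain (your ``more cleanly enlarge $\Omega^-$'' parenthesis is the right fix; the ``vanish on the far boundary'' justification is not, and integrating over $\Omega^-$ alone would only produce $\|\e(\D\,\bps)\|^2_{0,\Omega^-\cup\Omega^+}$ plus a nonzero $\Gamma_0$-term). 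Second, with the paper's jump convention $[\gamma(\D\,\bps)]:=\gamma^-(\D\,\bps)-\gamma^+(\D\,\bps)$ the regrouping should read
\[
\langle\gamma^-_\bnu(\bsi_\bps)-\gamma^+_\bnu(\bsi_\bps),\,\gamma^-(\D\,\bps)\rangle_\Gamma
\,+\,\langle\gamma^+_\bnu(\bsi_\bps),\,[\gamma(\D\,\bps)]\rangle_\Gamma\,,
\]
with a plus sign in front of the jump term; then \eqref{jump-4}, \eqref{jump-2} and \eqref{trace-4} give $\langle-\W\,\bps,-\bps\rangle_\Gamma=\langle\W\,\bps,\bps\rangle_\Gamma$ directly, without the ad hoc factor $(-1)$ — worth being precise about, since the sign in \eqref{eq-W-psi-psi} is exactly what is exploited later through \eqref{W-eliptico} in the coercivity arguments.
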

\begin{proof}
Given $\bps\,\in\,\mathbf H^{1/2}(\Gamma)$, it follows from \eqref{trace-4} and
\eqref{jump-2} that
\[
\begin{array}{l}
\disp
\langle \W\,\bps,\bps\rangle_\Gamma\,=\,\langle -\, \gamma^\pm_\bnu\big(\Xi[\D\,\bps,\Pi\,\bps]\big),
\gamma^+(\D\,\bps) \,-\, \gamma^-(\D\,\bps) \rangle_\Gamma\\[2ex]
\disp\quad
= \quad \langle \gamma^-_\bnu\big(\Xi[\D\,\bps,\Pi\,\bps]\big),\gamma^-(\D\,\bps) \rangle_\Gamma
\,-\,\langle \gamma^+_\bnu\big(\Xi[\D\,\bps,\Pi\,\bps]\big),\gamma^+(\D\,\bps) \rangle_\Gamma\,.
\end{array}
\]
Next, integrating by parts in $\Omega\,:=\,\Omega_0 \cup \Omega^-$ and recalling that
the velocity/pressure pair $(\D\,\bps,\Pi\,\bps)$ satisfies the homogeneous Stokes equations,
we find that
\[
\begin{array}{l}
\disp
\langle \gamma^-_\bnu\big(\Xi[\D\,\bps,\Pi\,\bps]\big),\gamma^-(\D\,\bps) \rangle_\Gamma
\,=\,\int_\Omega \nabla \D\,\bps : \Xi[\D\,\bps,\Pi\,\bps]\,+\,
\int_\Omega \D\,\bps \cdot \bdiv\,\Xi[\D\,\bps,\Pi\,\bps]\\[2ex]
\disp\quad
= \quad \int_\Omega \nabla \D\,\bps : \Big\{\,\e(\D\,\bps)\,-\,\Pi\,\bps\,\I\,\Big\}
\,=\,\int_\Omega \nabla \D\,\bps : \e(\D\,\bps)\,=\,\|\e(\D\,\bps)\|^2_{0,\Omega}\,.
\end{array}
\]
Similarly, integrating by parts in $\Omega^+$, noting that $\bnu$ points inward $\Omega^+$,
and using additionally the conditions at infinity, we deduce that
\[
\,-\,\langle \gamma^+_\bnu\big(\Xi[\D\,\bps,\Pi\,\bps]\big),\gamma^+(\D\,\bps) \rangle_\Gamma
\,=\,\|\e(\D\,\bps)\|^2_{0,\Omega^+}\,,
\]
which, together with the previous identity, yields \eqref{eq-W-psi-psi} and ends the proof.
Alternatively, this proof can also be found in \cite[Proposition 6.4]{SelgasFJS}.

\end{proof}

\medskip
We now go back to the homogeneous Stokes equations in $\Omega^+$. In fact,
according to the Green's formulae provided in \cite[Section 2.3.1]{hw2008}, we have
the representations
\begin{equation}\label{Green-u}
\u\,=\,-\,\S\,\gamma^+_\bnu(\bsi)\,+\,\D\,\gamma^+(\u) \qin \Omega^+\,,
\end{equation}
and
\begin{equation}\label{Green-p}
p\,=\,-\,\Phi\,\gamma^+_\bnu(\bsi)\,+\,\Pi\,\gamma^+(\u)\qin \Omega^+\,.
\end{equation}
Therefore, evaluating the operators $\gamma^+$ and $\gamma^+_\bnu$ in $\u$ and $\bsi\,=\,\Xi[\u,p]$, 
respectively, with $\u$ and $p$ given by \eqref{Green-u} and \eqref{Green-p}, 
and applying the trace properties,
we arrive at the following boundary integral equations:
\begin{equation}\label{bie-1}
\gamma^+(\u)\,=\,-\,\V\,\gamma^+_\bnu(\bsi)\,+\,\left(\frac12\,\I\,+\,\K \right)\,\gamma^+(\u) \qon\Gamma\,,
\end{equation}
and
\begin{equation}\label{bie-2}
\gamma^+_\bnu(\bsi)\,=\,\left(\frac12\,\I\,-\,\K^{\tt t} \right)\,\gamma^+_\bnu(\bsi)
\,-\,\W\,\gamma^+(\u) \qon\Gamma\,.
\end{equation}
Moreover, thanks to the transmission conditions \eqref{transmission-conditions}
and the introduction of the additional unknown $\bva$ (cf. \eqref{trace-u}), the 
above equations become:
\begin{equation}\label{bie-1-with-bva}
\bva\,=\,-\,\V\,\gamma^-_\bnu(\bsi)\,+\,\left(\frac12\,\I\,+\,\K \right)\,\bva \qon\Gamma\,,
\end{equation}
and
\begin{equation}\label{bie-2-with-bva}
\W\,\bva\,+\,\left(\frac12\,\I\,+\,\K^{\tt t} \right)\,\gamma^-_\bnu(\bsi)\,=\,\0 \qon\Gamma\,.
\end{equation}
\section{The coupled variational formulations}\label{section5}
In this section we combine the dual-mixed approach in $\Omega^-$ (cf. Section \ref{section3})
with the boundary integral equation method in $\Omega^+$ (cf. Section \ref{section4})
to derive and analyze coupled variational formulations
for the transmission problem \eqref{transmission-problem-con-presion}.
\subsection{J \& N coupling with homogeneous Neumann boundary conditions on $\Gamma_0$}\label{section331}
Here we follow the Johnson-N\'ed\'elec coupling method (see \cite{bj1979}, \cite{jn1980}) and incorporate the
single boundary integral equation \eqref{bie-2-with-bva} into the dual-mixed variational formulation
in $\Omega^-$ given by \eqref{dual-mixed-formulation-Neumann-0}, which considers the homogeneous Neumann boundary condition $\,\gamma_\bnu^-(\bsi)\,=\,\0$ \,on\, $\Gamma_0$. More precisely, we test \eqref{bie-2-with-bva}
against $\bps\,\in\,\mathbf H^{1/2}(\Gamma)$ and add the resulting equation to
the first equation of \eqref{dual-mixed-formulation-Neumann-0} thus yielding a redefinition
of the bilinear form $\a_N = \a_D$ (cf. \eqref{a-D}). In this way, our coupled variational 
formulation reads as follows: Find $((\bsi,\bva),(\u,\bch))\,\in\,{\X_N}\times{\Y_N}$ such that
\begin{equation}\label{dual-mixed-formulation-complete}
\begin{array}{rcl}
\mathbf a_N((\bsi,\bva),(\bta,\bps))\,+\,\mathbf b_N((\bta,\bps),(\u,\bch)) &=& \F_N(\bta,\bps)
\qquad\forall\,(\bta,\bps)\,\in\, {\X_N}\,,\\[2ex]
\mathbf b_N((\bsi,\bva),(\bv,\bet)) &=& \G_N(\bv,\bet)
\qquad\forall\,(\bv,\bet)\,\in\, {\Y_N}\,,
\end{array}
\end{equation}
where 
\begin{equation}\label{spaces-new}
\X_N\,:=\,\mathbb H_0(\bdiv;\Omega^-)\times\mathbf H^{1/2}(\Gamma)\,,
\quad \Y_N\,:=\, \mathbf L^2(\Omega^-)\times \mathbb L^2_{\tt skew}(\Omega^-) \,,
\end{equation}
$\a_N : \X_N \times \X_N \to \R$ \,and\, $\b_N : \X_N \times \Y_N \to \R$ are the bounded bilinear forms
defined by
\begin{equation}\label{a-N-complete}
\mathbf a_N((\bsi,\bva),(\bta,\bps)) := \int_{\Omega^-} \bsi^{\tt d} : \bta^{\tt d}
\,+\,\langle \W\,\bva,\bps\rangle_\Gamma\,+\,\left\langle \left(\frac12\,\I\,+\,\K^{\tt t} \right)\,
\gamma^-_\bnu(\bsi),\bps \right\rangle_\Gamma - \langle \gamma^-_\bnu(\bta),\bva\rangle_\Gamma
\end{equation}
and
\begin{equation}\label{b-N-complete}
\mathbf b_N((\bta,\bps),(\bv,\bet))\,:=\,
\int_{\Omega^-} \bv \cdot \bdiv\,\bta\,+\,\int_{\Omega^-} \bet : \bta \,,
\end{equation}
and $\F_N : \X_N \to \R$ and $\G_N : \Y_N \to \R$ are the bounded linear functionals given by
\begin{equation}\label{F-complete}
\F_N(\bta,\bps)\,:=\,0 \,
\qquad\mbox{and}\qquad
\G_N(\bv,\bet)\,:=\,-\,\int_{\Omega^-}\f\cdot\bv.
\end{equation}

\medskip
We now observe from \eqref{b-N-complete} that the bounded linear operator 
induced by $\mathbf b_N$, say $\mathbf B_N : {\X_N} \to {\Y_N}$,
is given by $\,\mathbf B_N((\bta,\bps))\,:=\,\big(\bdiv\,\bta, \frac12(\bta - \bta^{\tt t})\big)$ for any 
$(\bta,\bps)\,\in\,{\X_N}$. If follows easily that
$\mathbf V_N$, the kernel of $\mathbf B_N$, reduces to
\[
\mathbf V_N\,:=\,\Big\{\,(\bta,\bps)\,\in\,{\X_N}: \quad \bta\,=\,\bta^{\tt t} \qan \bdiv\,\bta \,=\,\0 
\qin \Omega^-\,\Big\}\,.
\]

\medskip
The following lemmas, which establish a positiveness property of $\a_N$ on $\mathbf V_N$
and an inf-sup condition for $\b_N$, are crucial for the forthcoming analysis.
\begin{lemma}\label{quasi-coerciveness-a}
There holds
\begin{equation}\label{eq-quasi-coerciveness-a}
\a_N((\bta,\bps),(\bta,\bps))\,\ge\,\frac12\,\Big\{\,\|\bta^{\tt d}\|^2_{0,\Omega^-}
\,+\,\langle \W\,\bps,\bps\rangle_\Gamma\,\Big\} \qquad\forall\,(\bta,\bps)\,\in\,\mathbf V_N\,.
\end{equation}
\end{lemma}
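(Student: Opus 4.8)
The plan is to evaluate $\a_N$ on the diagonal of $\mathbf V_N\times\mathbf V_N$, re-express the sign-indefinite boundary term by integrating by parts in $\Omega^-$ against the interior double layer potential $\D\,\bps$, and then close the estimate with Young's inequality and Lemma \ref{W-psi-psi}.

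First I would fix $(\bta,\bps)\in\mathbf V_N$, so that $\bta=\bta^{\tt t}$ and $\bdiv\,\bta=\0$ in $\Omega^-$, and in addition $\gamma^-_\bnu(\bta)=\0$ on $\Gamma_0$ since $\bta\in\mathbb H_0(\bdiv;\Omega^-)$. Substituting into \eqref{a-N-complete} and collecting the two boundary terms carrying $\gamma^-_\bnu(\bta)$, one gets
\[
\a_N((\bta,\bps),(\bta,\bps))\,=\,\|\bta^{\tt d}\|^2_{0,\Omega^-}\,+\,\langle\W\,\bps,\bps\rangle_\Gamma\,+\,\Big\langle\Big(\K^{\tt t}-\tfrac12\,\I\Big)\gamma^-_\bnu(\bta),\bps\Big\rangle_\Gamma\,.
\]

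The crux is to identify the last term. By Lemma \ref{properties-1} we have $\D\,\bps\in\mathbf H^1(\Omega^-)$ with $\div(\D\,\bps)=0$, so Green's identity in $\Omega^-$, together with $\bdiv\,\bta=\0$ and $\bta=\bta^{\tt t}$, gives $\int_{\Omega^-}\bta:\e(\D\,\bps)=\langle\gamma^-_\bnu(\bta),\gamma^-(\D\,\bps)\rangle_{\partial\Omega^-}$; the contribution on $\Gamma_0$ vanishes because $\gamma^-_\bnu(\bta)=\0$ there, while on $\Gamma$ we insert the trace relation $\gamma^-(\D\,\bps)=(-\tfrac12\,\I+\K)\bps$ from \eqref{trace-2} and use the $\mathbf L^2(\Gamma)$-adjointness of $\K^{\tt t}$ and $\K$ to recover precisely $\langle(\K^{\tt t}-\tfrac12\,\I)\gamma^-_\bnu(\bta),\bps\rangle_\Gamma$. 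Since $\div(\D\,\bps)=0$ forces $\e(\D\,\bps)$ to be trace-free, one may also replace $\bta$ by $\bta^{\tt d}$ in the left-hand integral. Hence
\[
\a_N((\bta,\bps),(\bta,\bps))\,=\,\|\bta^{\tt d}\|^2_{0,\Omega^-}\,+\,\langle\W\,\bps,\bps\rangle_\Gamma\,+\,\int_{\Omega^-}\bta^{\tt d}:\e(\D\,\bps)\,.
\]

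To finish, I would bound the cross term by Young's inequality, $\int_{\Omega^-}\bta^{\tt d}:\e(\D\,\bps)\ge-\tfrac12\,\|\bta^{\tt d}\|^2_{0,\Omega^-}-\tfrac12\,\|\e(\D\,\bps)\|^2_{0,\Omega^-}$, and then use $\|\e(\D\,\bps)\|^2_{0,\Omega^-}\le\|\e(\D\,\bps)\|^2_{0,\R^3\backslash\Gamma}=\langle\W\,\bps,\bps\rangle_\Gamma$ provided by Lemma \ref{W-psi-psi}; collecting terms gives exactly \eqref{eq-quasi-coerciveness-a}. The only genuinely delicate point is the integration-by-parts step: one must check that $\D\,\bps$ is an admissible test function in $\mathbf H^1(\Omega^-)$, that the duality on $\partial\Omega^-$ splits over $\Gamma_0$ and $\Gamma$ so that the $\Gamma_0$ term drops, and that the divergence-free character of $\D\,\bps$ both legitimizes passing from $\bta$ to $\bta^{\tt d}$ and keeps $\langle\W\,\bps,\bps\rangle_\Gamma$ available through Lemma \ref{W-psi-psi}. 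Once the idea of testing the indefinite term against the interior double layer potential is in hand, the rest is routine.
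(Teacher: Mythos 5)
Your proof is correct and follows essentially the same route as the paper: plug into $\a_N$, rewrite the indefinite boundary term as $\langle\gamma^-_\bnu(\bta),\gamma^-(\D\,\bps)\rangle_\Gamma$ via \eqref{trace-2}, integrate by parts in $\Omega^-$ using $\bdiv\,\bta=\0$, $\bta=\bta^{\tt t}$, $\gamma^-_\bnu(\bta)=\0$ on $\Gamma_0$ and $\div(\D\,\bps)=0$, and close with Young's inequality and Lemma \ref{W-psi-psi}. The only difference is cosmetic (you start the integration by parts from $\int_{\Omega^-}\bta:\e(\D\,\bps)$ rather than from the boundary pairing), so nothing further is needed.
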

\begin{proof}
Given $(\bta,\bps)\,\in\,\mathbf V_N$ we have from \eqref{a-N-complete} and \eqref{trace-2}
\begin{equation}\label{previous}
\begin{array}{c}
\disp
\mathbf a_N((\bta,\bps),(\bta,\bps)) \,=\,\|\bta^{\tt d}\|^2_{0,\Omega^-}\,+\, \langle \W\,\bps,\bps\rangle_\Gamma
\,+\,\left\langle \gamma^-_\bnu(\bta),\left(\,-\,\frac12\,\I\,+\,\K \right)\,\bps \right\rangle_\Gamma\,,
\\
\disp
= \|\bta^{\tt d}\|^2_{0,\Omega^-}\,+\, \langle \W\,\bps,\bps\rangle_\Gamma
\,+\,\left\langle \gamma^-_\bnu(\bta),\gamma^-(\D\,\bps)\right\rangle_\Gamma\,.
\end{array}
\end{equation}
Hence, integrating by parts in $\Omega^-$ and using that $\gamma^-_\bnu(\bta)\,=\,\0$ on $\Gamma_0$, we find that
\begin{equation}\label{above}
\begin{array}{l}
\disp
\langle \gamma^-_\bnu(\bta),\gamma^-(\D\,\bps)\rangle_\Gamma\,=\,
\int_{\Omega^-}\Big\{\, \nabla \D\,\bps : \bta \,+\, \D\,\bps \cdot\bdiv\,\bta\,\Big\}\\[2ex]
\disp\quad
= \quad \int_{\Omega^-} \e\big(\D\,\bps\big) : \bta\,=\,\int_{\Omega^-} \e\big(\D\,\bps\big) : \bta^{\tt d}\,,
\end{array}
\end{equation}
where the free-divergence and symmetry properties of $\bta$, together
with the incompressibility  condition satisfied by $\D\,\bps$, have been
utilized in the last two equalities. In this way, replacing
\eqref{above} into \eqref{previous}, and then applying  Cauchy-Schwarz's
inequality and the identity \eqref{eq-W-psi-psi}, we deduce that
\[
\begin{array}{l}
\disp
a_N((\bta,\bps),(\bta,\bps)) \,=\,
\|\bta^{\tt d}\|^2_{0,\Omega^-}\,+\, \langle \W\,\bps,\bps\rangle_\Gamma
\,+\,\int_{\Omega^-} \e\big(\D\,\bps\big) : \bta^{\tt d}\\[2ex]
\disp\quad
\ge \quad 
\frac12\,\|\bta^{\tt d}\|^2_{0,\Omega^-}\,+\, \langle \W\,\bps,\bps\rangle_\Gamma\,-\,\frac12\,\|\e(\D\,\bps)\|^2_{0,\Omega^-}\\[2ex]
\disp\quad
\ge \quad 
\frac12\,\|\bta^{\tt d}\|^2_{0,\Omega^-}\,+\, \langle \W\,\bps,\bps\rangle_\Gamma\,-\,
\frac12\,\|\e(\D\,\bps)\|^2_{0,\R^3\backslash \Gamma}\\[2ex]
\disp\quad
= \quad 
\frac12\,\|\bta^{\tt d}\|^2_{0,\Omega^-}\,+\,\frac12\, \langle \W\,\bps,\bps\rangle_\Gamma\,,
\end{array}
\]
which finishes the proof.

\end{proof}

\begin{lemma}\label{inf-sup-b}
There exists $\beta > 0$ such that for any $(\bv,\bet)\,\in\,{\Y_N}$ there holds
\begin{equation}\label{eq-inf-sup-b}
\sup_{(\bta,\bps)\in {\X_N}\backslash\{\0\}}\,
\frac{\mathbf b_N((\bta,\bps),(\bv,\bet))}{\|(\bta,\bps)\|_{\X_N}}\,\ge\,\beta\,\|(\bv,\bet)\|_{\Y_N}\,.
\end{equation}
\end{lemma}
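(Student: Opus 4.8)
The plan is to establish the inf-sup condition for $\mathbf b_N$ by treating the two components of $(\bv,\bet)\in\mathbf Y_N = \mathbf L^2(\Omega^-)\times\mathbb L^2_{\tt skew}(\Omega^-)$ simultaneously, using a suitably constructed tensor $\bta\in\mathbb H_0(\bdiv;\Omega^-)$ together with the trivial choice $\bps=\0$, so that
\[
\mathbf b_N((\bta,\0),(\bv,\bet))\,=\,\int_{\Omega^-}\bv\cdot\bdiv\,\bta\,+\,\int_{\Omega^-}\bet:\bta.
\]
The standard approach, going back to the analysis of dual-mixed formulations for elasticity (cf.\ the references \cite{abd-JJAM-1984}, \cite{g-SPRINGER-2014}, \cite{s1988} cited in Section \ref{section3}), is to pick $\bta$ from an auxiliary Dirichlet problem. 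First I would solve, for given $(\bv,\bet)\in\mathbf Y_N$, the vector Poisson-type problem: let $\z\in\mathbf H^1(\Omega^-)$ with $\z=\0$ on $\Gamma_0$ solve $\Delta\z=\bv$ in $\Omega^-$ (componentwise, or rather the system $\bdiv(\nabla\z)=\bv$), and set $\bta:=\nabla\z+\bet$. Then $\bdiv\bta=\bdiv(\nabla\z)=\bv$ since $\bet$ is constant-row-divergence-free only if we are careful — instead it is cleaner to note $\bdiv\bet$ need not vanish, so I would rather take $\bta:=\nabla\z$ and handle the skew part separately, or use the classical device of choosing $\z$ to solve $\bdiv(\nabla\z) = \bv$ together with a correction. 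The cleanest route: since $\bet\in\mathbb L^2_{\tt skew}(\Omega^-)$, there is $\w\in\mathbf H^1_0(\Omega^-)$ (or a curl potential) with $\nabla\w-(\nabla\w)^{\tt t}$ controlling $\bet$; combining, one builds $\bta$ with $\bdiv\bta=\bv$, $\tfrac12(\bta-\bta^{\tt t})=$ prescribed, $\gamma^-_\bnu(\bta)=\0$ on $\Gamma_0$, and $\|\bta\|_{\bdiv;\Omega^-}\le C\|(\bv,\bet)\|_{\Y_N}$, whence
\[
\frac{\mathbf b_N((\bta,\0),(\bv,\bet))}{\|(\bta,\0)\|_{\X_N}}\,=\,\frac{\|\bv\|^2_{0,\Omega^-}+\|\bet\|^2_{0,\Omega^-}}{\|\bta\|_{\bdiv;\Omega^-}}\,\ge\,\frac1C\,\|(\bv,\bet)\|_{\Y_N}.
\]

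Concretely, I would invoke the following well-known construction (this is the heart of dual-mixed elasticity theory): given $(\bv,\bet)\in\mathbf L^2(\Omega^-)\times\mathbb L^2_{\tt skew}(\Omega^-)$, let $\z\in\mathbf H^1(\Omega^-)$ be the unique solution of the mixed boundary value problem $\bdiv\,\e(\z)=\bv$ in $\Omega^-$ with $\z=\0$ on $\Gamma_0$ and $\e(\z)\bnu=\0$ on $\Gamma$ — wait, one must be careful about compatibility; instead use pure Dirichlet data $\z=\0$ on all of $\partial\Omega^-$ — no, then $\gamma^-_\bnu$ of the resulting tensor on $\Gamma_0$ is not controlled. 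The correct classical choice is: solve $-\Delta\z=-\bv$ in $\Omega^-$, $\z=\0$ on $\Gamma_0$, $\partial_\bnu\z=\0$ on $\Gamma$, which has a solution since there is no compatibility constraint (the $\Gamma_0$ piece of boundary is nonempty), with $\|\z\|_{1,\Omega^-}\le C\|\bv\|_{0,\Omega^-}$. Then $\bta_0:=\nabla\z$ satisfies $\bdiv\,\bta_0=\bv$ and $\gamma^-_\bnu(\bta_0)=(\nabla\z)\bnu=\partial_\bnu\z=\0$ on $\Gamma_0$. This handles $\bv$. For the skew part, I would add $\bta_1:=$ a divergence-free, $\Gamma_0$-normal-trace-free tensor whose skew part is $\bet$; such a $\bta_1$ exists by solving a curl-type system (e.g.\ $\bta_1=\mathbf{curl}\,\boldsymbol\theta$ for suitable $\boldsymbol\theta$) with the bound $\|\bta_1\|_{\bdiv;\Omega^-}\le C\|\bet\|_{0,\Omega^-}$, and then \emph{symmetrizing the bookkeeping}: since adding $\bta_1$ perturbs $\int\bet:\bta$ by the full $\|\bet\|^2_{0,\Omega^-}$ plus a cross term $\int\bet:(\text{sym part of }\bta_1)=0$, and perturbs $\int\bv\cdot\bdiv\bta$ by $\int\bv\cdot\bdiv\bta_1=0$, one gets the clean lower bound. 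Finally set $\bta:=\bta_0+\bta_1\in\mathbb H_0(\bdiv;\Omega^-)$, take $\bps=\0$, and conclude.

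The main obstacle — and the point that requires genuine care rather than routine work — is the construction of the divergence-free tensor $\bta_1$ with prescribed skew part $\bet$, vanishing normal trace on $\Gamma_0$, and the right norm bound, on the \emph{annular, merely Lipschitz} domain $\Omega^-$. In the simply connected case this is the standard argument (solve a vector Poisson problem for a potential and take its curl, or invoke a right inverse of $\mathbf{div}$ acting on skew-symmetric tensors); on an annulus one must check that the topology of $\Omega^-$ (which is \emph{not} simply connected, being bounded by $\Gamma_0$ and $\Gamma$) does not obstruct the construction — it does not, because we only need the normal trace to vanish on the \emph{inner} boundary $\Gamma_0$ and are free on $\Gamma$, which gives enough room; nonetheless this should be stated carefully, perhaps by reducing to the standard result on a ball containing $\Omega^-$ via extension, or by citing the corresponding lemma from the elasticity literature. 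Everything else — the ellipticity-type estimate, the norm bounds from elliptic regularity of the auxiliary problems, and the final division — is routine, and one simply reads off $\beta=1/C$ with $C$ the product of the stability constants of the two auxiliary problems.
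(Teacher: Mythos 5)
Your overall plan---take $\bps=\0$ and exhibit a tensor in $\mathbb H_0(\bdiv;\Omega^-)$ realizing the pair $(\bv,\bet)$, i.e.\ prove surjectivity of the operator induced by $\mathbf b_N$---is the same as the paper's, but your execution has three concrete gaps. First, your auxiliary Poisson problem imposes $\z=\0$ on $\Gamma_0$ and $\partial_\bnu\z=\0$ on $\Gamma$, and you then claim $\gamma^-_\bnu(\nabla\z)=\0$ on $\Gamma_0$; with those boundary conditions the normal derivative on $\Gamma_0$ is not controlled at all. The homogeneous Neumann (natural) condition must sit on $\Gamma_0$, which forces the essential condition $\z=\0$ to sit on $\Gamma$, exactly as in the paper, whose test space is $\mathbf H^1_\Gamma(\Omega^-):=\{\w\in\mathbf H^1(\Omega^-):\ \w=\0 \mbox{ on } \Gamma\}$. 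Second, even granting the fix, your two-step construction does not yield the displayed identity $\mathbf b_N((\bta,\0),(\bv,\bet))=\|\bv\|^2_{0,\Omega^-}+\|\bet\|^2_{0,\Omega^-}$: with $\bta_0=\nabla\z$ the term $\int_{\Omega^-}\bet:\bta_0=\int_{\Omega^-}\bet:\frac12\big(\nabla\z-(\nabla\z)^{\tt t}\big)$ does not vanish because $\nabla\z$ is not symmetric; this cross term is of size $C\,\|\bet\|_{0,\Omega^-}\|\bv\|_{0,\Omega^-}$ and can destroy positivity unless you add a scaling/absorption argument, which you do not give. Third, the step you yourself flag as the ``main obstacle''---existence of a divergence-free $\bta_1$ with prescribed skew part $\bet$, vanishing normal trace on $\Gamma_0$, and $\|\bta_1\|_{\bdiv;\Omega^-}\le C\|\bet\|_{0,\Omega^-}$---is precisely the nontrivial content of the lemma and is left as an assertion; the curl-potential hint does not obviously produce a matrix field with a prescribed skew part, so the proof is not complete as written.

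The paper resolves all three points with a single auxiliary problem: by the second Korn inequality there is a unique $\z\in\mathbf H^1_\Gamma(\Omega^-)$ with $\int_{\Omega^-}\e(\z):\e(\w)=-\int_{\Omega^-}\bv\cdot\w-\int_{\Omega^-}\bet:\nabla\w$ for all $\w\in\mathbf H^1_\Gamma(\Omega^-)$, and one sets $\widehat\bta:=\e(\z)+\bet$. Since $\e(\z)$ is symmetric and $\bet$ is skew, the skew part of $\widehat\bta$ is exactly $\bet$; testing with compactly supported $\w$ gives $\bdiv\widehat\bta=\bv$, testing with general $\w\in\mathbf H^1_\Gamma(\Omega^-)$ gives $\gamma^-_\bnu(\widehat\bta)=\0$ on $\Gamma_0$ as the natural boundary condition, and Lax--Milgram/Korn give $\|\widehat\bta\|_{\bdiv;\Omega^-}\le C\|(\bv,\bet)\|_{\Y_N}$, so that $\mathbf B_N((\widehat\bta,\0))=(\bv,\bet)$ with the required bound. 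If you wish to keep your two-step layout you must (i) swap the boundary pieces in the Poisson problem, (ii) actually construct $\bta_1$ (the cleanest route being the above elasticity problem with $\bv=\0$), and (iii) absorb the cross term; at that point you will have reproduced, with extra work, the paper's one-stroke argument.
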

\begin{proof}
It reduces to show that the operator $\mathbf B_N$ is surjective. In fact, given
$(\bv,\bet)\,\in\,{\Y_N}$, we let $\z$ be the unique element in $\,\mathbf H^1_\Gamma(\Omega^-)
\,:=\,\Big\{\,\mathbf w\in \mathbf H^1(\Omega^-): \quad \mathbf w\,=\,\0 \qon \Gamma\,\Big\}\,$,
whose existence is guaranteed by the second Korn inequality, such that
\[
\int_{\Omega^-} \e(\z) : \e(\w)\,=\,-\int_{\Omega^-} \bv \cdot \w
\,- \int_{\Omega^-} \bet : \nabla \w \qquad\forall\,\w\,\in\,\mathbf H^1_\Gamma(\Omega^-)\,.
\]
Hence, defining $\,\,\widehat\bta\,:=\,\e(\z) + \bet\,\in\,\mathbb L^2(\Omega^-)$,
we deduce from the above formulation that $\,\bdiv\,\widehat\bta\,=\,\bv$ \,in\, $\Omega^-$, which shows that
$\,\widehat\bta\,\in\,\mathbb H(\bdiv;\Omega^-)$, and then that $\gamma^-_\bnu(\widehat\bta)
\,=\,\0$ \,on\, $\Gamma_0$. In this way, $\widehat \bta\,\in\,\mathbb H_0(\bdiv;\Omega^-)$
and it is easy to see that $\,\mathbf B_N((\widehat\bta,\0))\,=\,(\bv,\bet)$, which ends the proof.

\end{proof}

\medskip
Note that the fact that $\mathbf b_N((\bta,\bps),(\bv,\bet))$ does not depend on $\bps$ guarantees
that the inf-sup condition \eqref{eq-inf-sup-b} can also be rewritten as
\begin{equation}\label{eq-inf-sup-b-simple}
\sup_{\bta\in \mathbb H_0(\bdiv;\Omega^-)\backslash\{\0\}}\,
\frac{\mathbf b_N((\bta,\0),(\bv,\bet))}{\|(\bta,\0)\|_{\X_N}}\,\ge\,\beta\,\|(\bv,\bet)\|_{\Y_N}
\qquad\forall\,(\bv,\bet)\in{\Y_N}\,.
\end{equation}

\medskip
We now begin the solvability analysis of \eqref{dual-mixed-formulation-complete} by identifying
previously the solutions of the associated homogeneous problem.
\begin{lemma}\label{kernel}
The set of solutions of the homogeneous version of \eqref{dual-mixed-formulation-complete}
is given by
\[
\Big\{\,\big((\bsi,\bva),(\u,\bch)\big)\,:=\,
\big((\0,\z|_\Gamma),(\z,\nabla \z)\big): \qquad \z\,\in\,\mathbf{RM}(\Omega^-)\,\Big\}\,.
\]
\end{lemma}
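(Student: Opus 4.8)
The plan is to take a solution $\big((\bsi,\bva),(\u,\bch)\big)$ of the homogeneous version of \eqref{dual-mixed-formulation-complete} (i.e. with $\F_N = \0$ and $\G_N = \0$) and to test judiciously against the solution itself. First I would test the first homogeneous equation with $(\bta,\bps) = (\bsi,\bva)$ and the second with $(\bv,\bet) = (\u,\bch)$; subtracting these, the $\mathbf b_N$ contributions cancel and we are left with $\a_N((\bsi,\bva),(\bsi,\bva)) = 0$. The crucial point is that, from the second homogeneous equation $\mathbf b_N((\bsi,\bva),(\bv,\bet)) = 0$ for all $(\bv,\bet)\in\Y_N$, we deduce $(\bsi,\bva)\in\mathbf V_N$, so that $\bsi = \bsi^{\tt t}$ and $\bdiv\,\bsi = \0$ in $\Omega^-$ (and $\gamma^-_\bnu(\bsi) = \0$ on $\Gamma_0$, since $\bsi\in\mathbb H_0(\bdiv;\Omega^-)$). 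Now Lemma \ref{quasi-coerciveness-a} applies and gives $0 = \a_N((\bsi,\bva),(\bsi,\bva)) \ge \frac12\big(\|\bsi^{\tt d}\|^2_{0,\Omega^-} + \langle\W\,\bva,\bva\rangle_\Gamma\big)\ge 0$. Hence $\bsi^{\tt d} = \0$ in $\Omega^-$ and $\langle\W\,\bva,\bva\rangle_\Gamma = 0$; by \eqref{W-eliptico-equiv} the latter forces $\bva - \boldsymbol\pi_{_{RM}}\bva = \0$, i.e. $\bva\in\mathbf{RM}(\Gamma)$, and in fact by Lemma \ref{W-psi-psi} also $\e(\D\,\bva) = \0$ in $\R^3\backslash\Gamma$.

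Next I would upgrade $\bsi^{\tt d} = \0$ to $\bsi = \0$. Since $\bsi\in\mathbb H(\bdiv;\Omega^-)$ with $\bdiv\,\bsi = \0$ and $\bsi = \bsi^{\tt d} + \frac13(\tr\bsi)\I = \frac13(\tr\bsi)\I$, taking the divergence row-wise shows $\nabla(\tr\bsi) = \0$, so $\tr\bsi$ is a constant. To pin this constant to zero I would go back to the first homogeneous equation and test with a well-chosen $(\bta,\bps)$: taking $\bps = \0$ and $\bta = c\,\I$ for a constant $c$ (note $c\,\I\in\mathbb H_0(\bdiv;\Omega^-)$ since its normal trace is $c\,\bnu$, which is \emph{not} zero on $\Gamma_0$ — so instead I should choose $\bta$ supported away from $\Gamma_0$ or argue differently). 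The cleaner route: use the recovered relations to reconstruct $\u$. From $\bsi^{\tt d} = \e(\u) - $ wait, rather, recall the constitutive law $\bsi^{\tt d} = \nabla\u - \bch$ gave, upon the weak formulation, that $\bsi^{\tt d} = \e(\u)$ and $\bch$ is the skew part of $\nabla\u$; with $\bsi^{\tt d} = \0$ we get $\e(\u) = \0$ in $\Omega^-$, so $\u\in\mathbf{RM}(\Omega^-)$ by the (rigid-motion) kernel of the strain operator, and correspondingly $\bch = \frac12(\nabla\u - (\nabla\u)^{\tt t}) = \nabla\u$ (the symmetric part vanishing). This already matches the asserted form $(\u,\bch) = (\z,\nabla\z)$ with $\z\in\mathbf{RM}(\Omega^-)$.

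The remaining obstacles are: (i) to show $\bsi = \0$ outright, and (ii) to identify $\bva = \z|_\Gamma$ with the \emph{same} $\z$, i.e. to match the boundary rigid motion $\bva$ with the interior one. For (i), I would test the first homogeneous equation with $(\bta,\0)$ for $\bta$ ranging over all of $\mathbb H_0(\bdiv;\Omega^-)$: using $\bsi^{\tt d} = \0$, $\bva\in\mathbf{RM}(\Gamma)$ and \eqref{new4.24} (which gives $\langle(\frac12\I+\K^{\tt t})\gamma^-_\bnu(\bsi),\bps\rangle_\Gamma$-type terms vanish against rigid motions, and more to the point $\gamma^-_\bnu(\bsi) = \gamma^-_\bnu(\frac13(\tr\bsi)\I) = \frac13(\tr\bsi)\,\bnu$ which is a constant times $\bnu$), the first equation collapses to $-\langle\gamma^-_\bnu(\bta),\bva\rangle_\Gamma + (\text{terms in }\tr\bsi) = 0$ for all admissible $\bta$. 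Choosing $\bta$ with prescribed normal trace on $\Gamma$ and zero normal trace on $\Gamma_0$ (possible by surjectivity of the normal trace on $\mathbb H_0(\bdiv;\Omega^-)$ onto $\mathbf H^{-1/2}(\Gamma)$, up to the compatibility $\langle\gamma^-_\bnu(\bta),\mathbf 1\rangle$ constraints) forces $\bva = \0$ on $\Gamma$ unless balanced by the $\tr\bsi$ term — carefully tracking this should yield both $\tr\bsi = 0$ (hence $\bsi = \0$) and, combined with the interior Stokes/Green identity in $\Omega^+$ via \eqref{bie-2-with-bva} (which was built into $\a_N$), that $\bva$ is the trace of the same rigid motion. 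For (ii), the conceptual reason is that on $\Gamma$ the pair $(\bsi,\bva)$ must be consistent with a velocity field that is a rigid motion on both sides of $\Gamma$ and globally $H^1$, which pins $\bva = \z|_\Gamma$; I expect the honest bookkeeping of these boundary identifications — reconciling the $\mathbf{RM}(\Gamma)$ freedom in $\bva$ with the $\mathbf{RM}(\Omega^-)$ freedom in $\z$ and showing they are forced to coincide — to be the main technical obstacle, whereas the energy argument giving $\bsi^{\tt d} = \0$ and $\bva\in\mathbf{RM}(\Gamma)$ is routine.
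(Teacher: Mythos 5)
Your first paragraph is exactly the paper's opening: from the second equation $(\bsi,\bva)\in\mathbf V_N$, testing the first equation with $(\bsi,\bva)$ and invoking Lemma \ref{quasi-coerciveness-a} together with \eqref{W-eliptico-equiv} gives $\bsi^{\tt d}=\0$ and $\bva\in\mathbf{RM}(\Gamma)$. After that, however, the argument has genuine gaps. First, the constant: you yourself record that $\gamma^-_\bnu(\bsi)=\0$ on $\Gamma_0$ because $\bsi\in\mathbb H_0(\bdiv;\Omega^-)$ (cf. \eqref{hbdiv-0}), and once $\bsi=c\,\I$ this immediately forces $c\,\bnu=\0$ on $\Gamma_0$, hence $c=0$ and $\bsi=\0$ --- this is precisely what the paper does, and no testing with special $\bta$ is needed. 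The testing strategy you sketch instead is not only unnecessary but incorrect as stated: with $\bps=\0$ the first homogeneous equation reads $-\langle\gamma^-_\bnu(\bta),\bva\rangle_\Gamma+\mathbf b_N((\bta,\0),(\u,\bch))=0$, so it does not ``collapse'' to a relation between $\bva$ and $\tr\bsi$ alone (the $(\u,\bch)$ terms are present and do the balancing), and it certainly cannot force $\bva=\0$, since $\bva$ is in general a nonzero rigid-motion trace. Second, your ``cleaner route'' to $(\u,\bch)$ asserts $\bsi^{\tt d}=\e(\u)$ from the constitutive law, but in the weak problem $\u\in\mathbf L^2(\Omega^-)$ only; that identity would have to be re-derived distributionally from the first equation, and even then you would still have to link the resulting interior rigid motion to the boundary datum $\bva$ (your item (ii), which you leave as an ``expected obstacle''). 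The paper closes both points at once in one short step that your proposal is missing: having $\bsi=\0$ and $\bva=\z|_\Gamma$ with $\z\in\mathbf{RM}(\Omega^-)$, take $\bps=\0$ in the first equation and integrate $\langle\gamma^-_\bnu(\bta),\z\rangle_\Gamma$ by parts (using $\gamma^-_\bnu(\bta)=\0$ on $\Gamma_0$) to get $\mathbf b_N((\bta,\0),(\u-\z,\bch-\nabla\z))=0$ for all $\bta\in\mathbb H_0(\bdiv;\Omega^-)$; the inf-sup condition \eqref{eq-inf-sup-b-simple} then yields $(\u,\bch)=(\z,\nabla\z)$ with the \emph{same} $\z$, with no need for any $H^1$-regularity of $\u$ or boundary bookkeeping.

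Finally, the lemma is an equality of sets, so you must also verify the converse inclusion: that for every $\z\in\mathbf{RM}(\Omega^-)$ the quadruple $\big((\0,\z|_\Gamma),(\z,\nabla\z)\big)$ solves the homogeneous problem (using $\ker(\W)=\mathbf{RM}(\Gamma)$ from \eqref{kernels-V-and-W} and an integration by parts, noting $\nabla\z\in\mathbb L^2_{\tt skew}(\Omega^-)$). Your proposal never addresses this direction.
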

\begin{proof}
Let $\big((\bsi,\bva),(\u,\bch)\big)\,\in\,{\X_N} \times {\Y_N}$ be a solution of 
\eqref{dual-mixed-formulation-complete} with $\f\,=\,\0$. It is clear from the 
second equation that $(\bsi,\bva)\,\in\,\mathbf V_N$,
\,that is\, $\bsi\,=\,\bsi^{\tt t}$ \,and\, $\bdiv \bsi\,=\,\0$ \,in\, $\Omega^-$.
Then, taking in particular $(\bta,\bps)\,=\,(\bsi,\bva)$ in the first equation,
and then applying the inequalities \eqref{eq-quasi-coerciveness-a} (cf. Lemma 
\ref{quasi-coerciveness-a}) and \eqref{W-eliptico-equiv}, we find that
\[
0\,=\,\mathbf a_N\big((\bsi,\bva),(\bsi,\bva)\big)\,\ge\,\frac12\,
\Big\{\,\|\bsi^{\tt d}\|^2_{0,\Omega^-}\,+\,\langle \W\,\bva,\bva\rangle_\Gamma\,\Big\}
\,\ge\,\frac12\,\|\bsi^{\tt d}\|^2_{0,\Omega^-}\,+\,\frac{\tilde\alpha_2}{2}\,
\|\bva-\boldsymbol\pi_{_{RM}}\bva\|^2_{1/2,\Gamma}\,,
\]
which gives $\bsi^{\tt d}\,=\,\0$ \,in\, $\Omega^-$ \,and $\bva\,=\,\mathbf z|_{\Gamma}$, \,with $\,\mathbf z\,\in\,\mathbf{RM}(\Omega^-)$.
In turn, the conditions satisfied by $\bsi$, namely $\bdiv \bsi = \0$
\,and\, $\bsi^{\tt d} = \0$ in $\Omega^-$, together with the fact that 
$\gamma^-_\bnu(\bsi) = \0$ on $\Gamma_0$ imply that $\bsi \,=\,\0$. Next,
taking $\bps = \0$ in the first equation of our homogeneous problem, and then
integrating by parts in $\Omega^-$, we obtain that for any $\bta\in \mathbb H_0(\bdiv;\Omega^-)$ 
there holds
\[
\begin{array}{l}
\disp
0\,=\,\mathbf b_N((\bta,\0),(\u,\bch))\,-\,\langle \gamma^-_\bnu(\bta),\bva\rangle_\Gamma
\,=\,\mathbf b_N((\bta,\0),(\u,\bch))\,-\,\langle \gamma^-_\bnu(\bta),\z\rangle_\Gamma\\[2ex]
\disp\,\,\,\,\,
=\,\mathbf b_N((\bta,\0),(\u,\bch))\,-\,\int_{\Omega^-} \z\cdot\bdiv \bta
\,-\,\int_{\Omega^-} \nabla\z : \bta\,=\,
\mathbf b_N((\bta,\0),(\u-\z,\bch-\nabla \z))\,,
\end{array}
\] 
which, thanks to the inf-sup condition \eqref{eq-inf-sup-b-simple}, gives
$(\u,\bch)\,=\,(\z,\nabla\z)$. Conversely, it is easy to see, in particular
using that $\mathrm{ker}\,(\W)\,=\,\mathbf{RM}(\Gamma)$ (cf. \eqref{kernels-V-and-W}), 
that for any $\z\,\in\,\mathbf{RM}(\Omega^-)$ the element $\big((\bsi,\bva),(\u,\bch)\big)\,:=\,
\big((\0,\z|_\Gamma),(\z,\nabla \z)\big)$ solves the homogeneous version 
of \eqref{dual-mixed-formulation-complete}.

\end{proof}

\medskip
According to the above lemma and the decomposition
$\mathbf H^{1/2}(\Gamma)\,=\,\mathbf H^{1/2}_0(\Gamma)\,\oplus\,\mathbf{RM}(\Gamma)$ (cf. \eqref{decompositions}), 
and in order to guarantee the unique solvability of the coupled problem \eqref{dual-mixed-formulation-complete}, 
we now look for the solution $((\bsi,\bva),(\u,\bch))$ in the space ${\widetilde{\X}_N}\times{\Y_N}$, where
\begin{equation}\label{space-X-0}
\widetilde{\X}_N\,:=\,\mathbb H_0(\bdiv;\Omega^-)\times\mathbf H^{1/2}_0(\Gamma)\,.
\end{equation}
In turn, it is easy to see, using that $\langle \W\,\bva,\bps\rangle_\Gamma\,=\, 
\langle \W\,\bps,\bva\rangle_\Gamma$ and that $\disp \mathrm{ker}(\W)\,=\,
\mathrm{ker}\left(\frac12\,\I +\,\K\right)\,=\,
\mathbf{RM}(\Gamma)$ (cf. \eqref{kernels-V-and-W}),  that the ocurrence of the first equation of 
\eqref{dual-mixed-formulation-complete} can be equivalently established for any 
$(\bta,\bps) \in{\widetilde{\X}_N}$. As a consequence, and instead of \eqref{dual-mixed-formulation-complete},
we now seek $((\bsi,\bva),(\u,\bch))\,\in\,{\widetilde{\X}_N}\times{\Y_N}$ such that
\begin{equation}\label{dual-mixed-formulation-complete-new}
\begin{array}{rcl}
\mathbf a_N((\bsi,\bva),(\bta,\bps))\,+\,\mathbf b_N((\bta,\bps),(\u,\bch)) &=& \F_N(\bta,\bps)
\qquad\forall\,(\bta,\bps)\,\in\, {\widetilde{\X}_N}\,,\\[2ex]
\mathbf b_N((\bsi,\bva),(\bv,\bet)) &=& \G_N(\bv,\bet)
\qquad\forall\,(\bv,\bet)\,\in\, {\Y_N}\,,
\end{array}
\end{equation}

\medskip
The following two lemmas are needed to show the well-posedness of \eqref{dual-mixed-formulation-complete-new}. 
They make use of the decomposition defined by \eqref{tilde-hbdiv} and \eqref{decomp-hdiv-1},
which says in this case that each $\bta\in \mathbb H(\bdiv;\Omega^-)$ can be written 
in a unique way as $\bta = \bta_0 + d\,\I$,
with $\bta_0\in \widetilde{\mathbb H}(\bdiv;\Omega^-)$ and $d\in \R$. The associated projector will be denoted $\boldsymbol\pi_{_I}:\mathbb H(\bdiv;\Omega^-) \to P_0(\Omega^-)\,I$. 
\begin{lemma}\label{cota-tau}
There exists $c_1 > 0$, depending only on $\Omega^-$, such that
\begin{equation}\label{eq-cota-tau}
\|\bta^{\tt d}\|^2_{0,\Omega^-} \,+\,\|\bdiv \,\bta\|^2_{0,\Omega^-} 
\,\,\ge\,\,c_1\,\|\bta-\boldsymbol\pi_{_I} \bta\|^2_{0,\Omega^-}
\qquad\forall\,\bta\in \mathbb H(\bdiv;\Omega^-)\,.
\end{equation}
\end{lemma}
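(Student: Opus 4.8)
The plan is to exploit the direct-sum decomposition $\mathbb H(\bdiv;\Omega^-) = \widetilde{\mathbb H}(\bdiv;\Omega^-) \oplus P_0(\Omega^-)\,\I$ recorded in \eqref{decomp-hdiv-1}, so that it suffices to prove the estimate on $\widetilde{\mathbb H}(\bdiv;\Omega^-)$, i.e.\ for tensors $\bta_0$ with $\int_{\Omega^-}\tr\bta_0 = 0$, since $\bta - \boldsymbol\pi_{_I}\bta = \bta_0$ and both $\bta^{\tt d}$ and $\bdiv\bta$ are unchanged when the multiple of $\I$ is added. Thus the claim reduces to showing that there is $c_1 > 0$ with
\[
\|\bta_0^{\tt d}\|^2_{0,\Omega^-} + \|\bdiv\,\bta_0\|^2_{0,\Omega^-} \,\ge\, c_1\,\|\bta_0\|^2_{0,\Omega^-}
\qquad\forall\,\bta_0 \in \widetilde{\mathbb H}(\bdiv;\Omega^-).
\]
This is the well-known result that $\bta \mapsto \big(\|\bta^{\tt d}\|^2_{0,\Omega^-} + \|\bdiv\bta\|^2_{0,\Omega^-}\big)^{1/2}$ is a norm on $\widetilde{\mathbb H}(\bdiv;\Omega^-)$ equivalent to the natural one; it is standard in the analysis of dual-mixed methods for Stokes and elasticity (see e.g.\ the references \cite{abd-JJAM-1984}, \cite{g-SPRINGER-2014} cited in Section~\ref{section3}).

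The core algebraic identity is that, writing $\bta = \bta^{\tt d} + \tfrac13(\tr\bta)\,\I$, one has
$\|\bta\|^2_{0,\Omega^-} = \|\bta^{\tt d}\|^2_{0,\Omega^-} + \tfrac13\|\tr\bta\|^2_{0,\Omega^-}$,
so everything hinges on controlling $\|\tr\bta_0\|_{0,\Omega^-}$ by the right-hand side. For this I would invoke the surjectivity of the divergence operator: for any $q \in L^2(\Omega^-)$ there is $\bv \in \mathbf H^1_0(\Omega^-)$ with $\div\bv = q$ and $\|\bv\|_{1,\Omega^-} \le C\|q\|_{0,\Omega^-}$. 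Applying this with $q = \tr\bta_0$ (which has zero mean, so the argument works with $\mathbf H^1$ or one may use the zero-boundary-value version directly), integrating $\int_{\Omega^-}\tr\bta_0\,\div\bv$ by parts, and using $\int\tr\bta_0\,\div\bv = \int\bta_0:\nabla\bv = \int\bta_0^{\tt d}:\nabla\bv - \int\bdiv\bta_0\cdot\bv$ (the decomposition of $\nabla\bv$ and integration by parts on the $\bdiv$ term, with the boundary term vanishing because $\bv\in\mathbf H^1_0$), one obtains
\[
\|\tr\bta_0\|^2_{0,\Omega^-} \le \big(\|\bta_0^{\tt d}\|_{0,\Omega^-} + C\|\bdiv\bta_0\|_{0,\Omega^-}\big)\,\|\nabla\bv\|_{0,\Omega^-} \le C'\big(\|\bta_0^{\tt d}\|_{0,\Omega^-} + \|\bdiv\bta_0\|_{0,\Omega^-}\big)\|\tr\bta_0\|_{0,\Omega^-},
\]
hence $\|\tr\bta_0\|_{0,\Omega^-} \le C'\big(\|\bta_0^{\tt d}\|_{0,\Omega^-} + \|\bdiv\bta_0\|_{0,\Omega^-}\big)$. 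Squaring, combining with the Pythagorean identity above, and absorbing constants yields the desired bound on $\widetilde{\mathbb H}(\bdiv;\Omega^-)$, which transfers verbatim to all of $\mathbb H(\bdiv;\Omega^-)$ via the projector $\boldsymbol\pi_{_I}$.

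The main obstacle — really the only non-routine point — is the correct handling of the zero-mean / boundary-value bookkeeping: the tensor $\bta_0$ has $\int\tr\bta_0 = 0$, which is exactly the compatibility condition that makes the right-inverse of $\div$ available, and one must be careful that the test field $\bv$ is chosen in $\mathbf H^1_0(\Omega^-)$ (or with a controlled trace) so that the integration-by-parts boundary term $\langle\gamma^-_\bnu(\bta_0),\gamma^-(\bv)\rangle_{\partial\Omega^-}$ genuinely vanishes, since $\gamma^-_\bnu(\bta_0)$ is only in $\mathbf H^{-1/2}$. Everything else is Cauchy--Schwarz and the Pythagorean splitting of the Frobenius norm, and the constant $c_1$ depends only on $\Omega^-$ through the constant in the right-inverse of the divergence (i.e.\ the inf-sup constant for the $L^2$--$\mathbf H^1_0$ pair on $\Omega^-$).
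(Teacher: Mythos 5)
Your proposal is correct in substance, but note that the paper does not prove this lemma at all: it simply cites \cite[Lemma 3.1]{adg} and \cite[Proposition 3.1, Chapter IV]{bf-1991}, and the argument you reconstruct (reduce to the zero-mean-trace component via the decomposition \eqref{decomp-hdiv-1}, then bound $\|\tr\bta_0\|_{0,\Omega^-}$ by testing against a field $\bv\in\mathbf H^1_0(\Omega^-)$ with $\div\bv=\tr\bta_0$ and $\|\bv\|_{1,\Omega^-}\le C\|\tr\bta_0\|_{0,\Omega^-}$) is exactly the standard proof found in those references, so you are on the intended route rather than a different one.

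One small algebraic slip should be fixed: the displayed identity $\int_{\Omega^-}\tr\bta_0\,\div\bv=\int_{\Omega^-}\bta_0:\nabla\bv$ is false as written (take $\bta_0$ traceless: the left side vanishes, the right side need not). The correct bookkeeping uses $(\tr\bta_0)\,\I=3\,(\bta_0-\bta_0^{\tt d})$, i.e.
\begin{equation*}
\frac13\,\|\tr\bta_0\|^2_{0,\Omega^-}\,=\,\frac13\int_{\Omega^-}\tr\bta_0\,\div\bv
\,=\,\int_{\Omega^-}\bta_0:\nabla\bv\,-\,\int_{\Omega^-}\bta_0^{\tt d}:\nabla\bv
\,=\,-\int_{\Omega^-}\bdiv\bta_0\cdot\bv\,-\,\int_{\Omega^-}\bta_0^{\tt d}:\nabla\bv\,,
\end{equation*}
after which Cauchy--Schwarz and $\|\bv\|_{1,\Omega^-}\le C\|\tr\bta_0\|_{0,\Omega^-}$ give $\|\tr\bta_0\|_{0,\Omega^-}\le C'\big(\|\bta_0^{\tt d}\|_{0,\Omega^-}+\|\bdiv\bta_0\|_{0,\Omega^-}\big)$. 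Since you absorb all constants anyway, this only changes the numerical factors and not the conclusion; the rest of your reasoning (the reduction via $\boldsymbol\pi_{_I}$, the compatibility $\int_{\Omega^-}\tr\bta_0=0$ needed for the right inverse of the divergence on the connected Lipschitz domain $\Omega^-$, and the vanishing of the boundary term for $\bv\in\mathbf H^1_0(\Omega^-)$) is sound.
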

\begin{proof}
See \cite[Lemma 3.1]{adg} or \cite[Proposition 3.1, Chapter IV]{bf-1991}.

\end{proof}
\begin{lemma}\label{otra-cota-tau}
There exists $c_2 > 0$, depending only on $\Omega^-$, such that
\begin{equation}\label{eq-otra-cota-tau}
\|\bta-\boldsymbol\pi_{_I} \bta\|^2_{\bdiv;\Omega^-}\,\,\ge\,\,c_2\,\|\bta\|^2_{\bdiv;\Omega^-}
\qquad\forall\,\bta\,\in\,\mathbb H_0(\bdiv;\Omega^-)\,.
\end{equation}
\end{lemma}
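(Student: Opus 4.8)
The plan is to argue by contradiction, or equivalently, to establish the reverse-Poincaré-type bound directly by exploiting the constraint $\gamma^-_\bnu(\bta)=\0$ on $\Gamma_0$. The key observation is that the decomposition $\bta=\bta_0+d\,\I$ with $\bta_0=\bta-\boldsymbol\pi_{_I}\bta\in\widetilde{\mathbb H}(\bdiv;\Omega^-)$ and $d\in\R$ forces $d$ to be controlled by $\bta_0$ whenever $\bta$ lies in $\mathbb H_0(\bdiv;\Omega^-)$. Indeed, since $\bdiv(d\,\I)=\0$, we immediately get $\bdiv\bta=\bdiv\bta_0$, so the divergence terms match exactly and it suffices to show
\[
\|\bta_0\|_{0,\Omega^-}^2+\|\bdiv\bta_0\|_{0,\Omega^-}^2\,\ge\,c\,|d|^2\,,
\]
since then $\|\bta\|_{\bdiv;\Omega^-}^2\le 2\|\bta_0\|_{\bdiv;\Omega^-}^2+2|d|^2\,|\Omega^-|\le C\,\|\bta_0\|_{\bdiv;\Omega^-}^2$.

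To control $|d|$, I would test the normal-trace identity against a suitable fixed function. The natural choice is to integrate by parts: for any $\bv\in\mathbf H^1(\Omega^-)$ one has $\langle\gamma^-_\bnu(\bta),\gamma^-(\bv)\rangle_{\partial\Omega^-}=\int_{\Omega^-}\bta:\nabla\bv+\int_{\Omega^-}\bv\cdot\bdiv\bta$. Choosing $\bv$ with $\gamma^-(\bv)=\0$ on $\Gamma$ (so that only the $\Gamma_0$-part of the boundary pairing survives) and using $\gamma^-_\bnu(\bta)=\0$ on $\Gamma_0$, the left-hand side vanishes, giving $\int_{\Omega^-}\bta:\nabla\bv=-\int_{\Omega^-}\bv\cdot\bdiv\bta$. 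Writing $\bta=\bta_0+d\,\I$ and noting $\int_{\Omega^-}d\,\I:\nabla\bv=d\int_{\Omega^-}\div\bv$, I would pick $\bv$ supported away from $\Gamma$ with $\int_{\Omega^-}\div\bv=1$ (possible since $\Gamma_0$ is a genuine piece of boundary where $\bv$ need not vanish; e.g. take $\bv=\nabla\phi$ for an appropriate $\phi\in H^2(\Omega^-)$ vanishing near $\Gamma$ with $\int_{\Gamma_0}\partial_\bnu\phi\neq 0$). Then $|d|=\big|\int_{\Omega^-}\bta_0:\nabla\bv+\int_{\Omega^-}\bv\cdot\bdiv\bta_0\big|\le C\|\bta_0\|_{\bdiv;\Omega^-}$, which is exactly the estimate needed. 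Combining this with Lemma \ref{cota-tau} (which gives $\|\bta_0\|_{0,\Omega^-}^2\le c_1^{-1}(\|\bta^{\tt d}\|_{0,\Omega^-}^2+\|\bdiv\bta\|_{0,\Omega^-}^2)$, and note $\bta_0^{\tt d}=\bta^{\tt d}$) closes the argument.

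The main obstacle is the construction of the test function $\bv$: one must produce $\bv\in\mathbf H^1(\Omega^-)$ with $\gamma^-(\bv)=\0$ on $\Gamma$ and $\int_{\Omega^-}\div\bv=\int_{\Gamma_0}\gamma^-(\bv)\cdot\bnu\,\ne 0$, which is a straightforward but slightly fussy geometric lemma (it hinges on $\Gamma_0$ being a nonempty connected component of $\partial\Omega^-$ disjoint from $\Gamma$, so that prescribing a nonzero normal flux on $\Gamma_0$ while vanishing on $\Gamma$ is compatible). Alternatively, one can sidestep this by a contradiction argument: if \eqref{eq-otra-cota-tau} fails, take a sequence $\bta_n\in\mathbb H_0(\bdiv;\Omega^-)$ with $\|\bta_n\|_{\bdiv;\Omega^-}=1$ and $\|\bta_n-\boldsymbol\pi_{_I}\bta_n\|_{\bdiv;\Omega^-}\to 0$; then $\bta_n\to d\,\I$ in $\mathbb H(\bdiv;\Omega^-)$ for some $d$ with $|d|^2|\Omega^-|=1$, while the closedness of $\mathbb H_0(\bdiv;\Omega^-)$ forces $d\,\I\in\mathbb H_0(\bdiv;\Omega^-)$, i.e. $\gamma^-_\bnu(d\,\I)=d\,\bnu=\0$ on $\Gamma_0$, hence $d=0$, a contradiction. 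I would likely present the direct proof but mention the contradiction route, since the latter is cleaner and requires no explicit test-function construction.
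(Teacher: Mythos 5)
Your proposal is correct, and it is worth noting that the paper itself does not prove this lemma at all: the stated proof is a one-line citation of \cite[Lemma 4.5]{gmm2007}. What you have written is therefore a self-contained reconstruction of the missing argument, and both of your routes work. The direct route is sound: writing $\bta=\bta_0+d\,\I$ with $\bta_0=\bta-\boldsymbol\pi_{_I}\bta$, the identity $\langle\gamma^-_\bnu(\bta),\gamma^-(\bv)\rangle_{\partial\Omega^-}=\int_{\Omega^-}\bta:\nabla\bv+\int_{\Omega^-}\bv\cdot\bdiv\bta$ with a fixed $\bv\in\mathbf H^1(\Omega^-)$ vanishing on $\Gamma$ and satisfying $\int_{\Omega^-}\div\bv\neq\0$ (such $\bv$ exists, e.g.\ a cutoff near $\Gamma_0$ times $\x-\x_0$ with $\x_0\in\Omega_0$, since the flux through $\Gamma_0$ is then $\pm 3|\Omega_0|$) gives $|d|\le C\,\|\bta_0\|_{\bdiv;\Omega^-}$, and the triangle inequality finishes the proof; this uses exactly the feature that distinguishes $\mathbb H_0(\bdiv;\Omega^-)$ from $\mathbb H(\bdiv;\Omega^-)$, namely that $\I\notin\mathbb H_0(\bdiv;\Omega^-)$ because $\gamma^-_\bnu(\I)=\bnu\neq\0$ on $\Gamma_0$. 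The contradiction argument is also valid (and cheap, since $\boldsymbol\pi_{_I}$ has one-dimensional range, so no genuine compactness is needed): closedness of $\mathbb H_0(\bdiv;\Omega^-)$ forces the limit $d\,\I$ to have $d\,\bnu=\0$ in $\mathbf H^{-1/2}(\Gamma_0)$, hence $d=0$. Two cosmetic remarks: $\|d\,\I\|^2_{0,\Omega^-}=3\,d^2\,|\Omega^-|$, not $d^2|\Omega^-|$ (irrelevant to the estimate), and the final appeal to Lemma \ref{cota-tau} in your direct proof is superfluous — once $|d|\le C\|\bta_0\|_{\bdiv;\Omega^-}$ the lemma follows immediately; Lemma \ref{cota-tau} only enters later, when the two lemmas are combined to obtain the ellipticity of $\a_N$ on the kernel.
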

\begin{proof}
See \cite[Lemma 4.5]{gmm2007}.

\end{proof}

\medskip
We are now in a position to establish the main result of this section.
\begin{theorem}
Given $\f\,\in\,\mathbf L^2(\Omega^-)$, there exists a unique 
$((\bsi,\bva),(\u,\bch))\,\in\,{\widetilde{\X}_N}\times{\Y_N}$ 
solution to \eqref{dual-mixed-formulation-complete-new}. In addition, there exists $C > 0$ such that
\[
\|((\bsi,\bva),(\u,\bch))\|_{{\X}_N\times {\Y_N}}\,\,\le\,\,C\,\|\f\|_{0,\Omega^-}\,.
\]
\end{theorem}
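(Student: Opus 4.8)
The plan is to recognize \eqref{dual-mixed-formulation-complete-new} as an abstract saddle point problem and to apply the Babu\v ska--Brezzi theory for (not necessarily symmetric) bilinear forms (cf. \cite{bf-1991}), whose hypotheses are the boundedness of $\a_N$ and $\b_N$, an inf-sup condition for $\b_N$, and the coerciveness of $\a_N$ on the kernel of the operator induced by $\b_N$. Boundedness of $\b_N$ is immediate, while that of $\a_N$ follows from the trace theorem together with the continuity of $\W$ and $\K^{\tt t}$ asserted in \eqref{properties-V-K-W}. For the inf-sup of $\b_N$ we simply invoke Lemma \ref{inf-sup-b}: since $\b_N$ does not involve the second component, the element $(\widehat\bta,\0)$ built in its proof already belongs to ${\widetilde\X_N}$, so \eqref{eq-inf-sup-b} (equivalently \eqref{eq-inf-sup-b-simple}) remains valid over ${\widetilde\X_N}$, and the kernel of $\b_N$ in ${\widetilde\X_N}$ is $\mathbf V_N\cap{\widetilde\X_N}=\{(\bta,\bps)\in{\widetilde\X_N}:\ \bta=\bta^{\tt t},\ \bdiv\bta=\0\ \text{in}\ \Omega^-\}$.

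The heart of the matter is to upgrade the positivity estimate of Lemma \ref{quasi-coerciveness-a} to genuine coerciveness on $\mathbf V_N\cap{\widetilde\X_N}$. Given such a pair $(\bta,\bps)$, \eqref{eq-quasi-coerciveness-a} gives $\a_N((\bta,\bps),(\bta,\bps))\ge\frac12\big(\|\bta^{\tt d}\|^2_{0,\Omega^-}+\langle\W\bps,\bps\rangle_\Gamma\big)$; since $\bps\in\mathbf H^{1/2}_0(\Gamma)$, the ellipticity \eqref{W-eliptico} bounds $\langle\W\bps,\bps\rangle_\Gamma$ below by $\alpha_2\|\bps\|^2_{1/2,\Gamma}$. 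For the stress part one chains Lemmas \ref{cota-tau} and \ref{otra-cota-tau}: because $\bdiv\bta=\0$, Lemma \ref{cota-tau} yields $\|\bta^{\tt d}\|^2_{0,\Omega^-}\ge c_1\|\bta-\boldsymbol\pi_{_I}\bta\|^2_{0,\Omega^-}=c_1\|\bta-\boldsymbol\pi_{_I}\bta\|^2_{\bdiv;\Omega^-}$ (the divergence of $\bta-\boldsymbol\pi_{_I}\bta$ still vanishes), and since $\bta\in\mathbb H_0(\bdiv;\Omega^-)$, Lemma \ref{otra-cota-tau} gives $\|\bta-\boldsymbol\pi_{_I}\bta\|^2_{\bdiv;\Omega^-}\ge c_2\|\bta\|^2_{\bdiv;\Omega^-}$; it is precisely the homogeneous Neumann condition $\gamma^-_\bnu(\bta)=\0$ on $\Gamma_0$ that makes the last lemma applicable, by ruling out the constant tensors $d\,\I$. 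Altogether $\a_N((\bta,\bps),(\bta,\bps))\ge\frac12\min\{c_1c_2,\alpha_2\}\,\|(\bta,\bps)\|^2_{\X_N}$ on $\mathbf V_N\cap{\widetilde\X_N}$.

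With the three hypotheses in hand, the abstract theory yields a unique solution $((\bsi,\bva),(\u,\bch))\in{\widetilde\X_N}\times{\Y_N}$ of \eqref{dual-mixed-formulation-complete-new}, together with an a priori bound $\|((\bsi,\bva),(\u,\bch))\|_{\X_N\times\Y_N}\le C\big(\|\F_N\|+\|\G_N\|\big)$. Since $\F_N=0$ and $|\G_N(\bv,\bet)|=\big|\int_{\Omega^-}\f\cdot\bv\big|\le\|\f\|_{0,\Omega^-}\,\|(\bv,\bet)\|_{\Y_N}$, this reduces to the asserted estimate. The step I expect to be the main obstacle is the kernel coerciveness of the previous paragraph: one has to exploit simultaneously the three structural facts $\bdiv\bta=\0$, $\gamma^-_\bnu(\bta)=\0$ on $\Gamma_0$, and $\bps\in\mathbf H^{1/2}_0(\Gamma)$ in order to convert the merely positive lower bound of Lemma \ref{quasi-coerciveness-a} into control of the full $\widetilde\X_N$-norm; everything else is routine.
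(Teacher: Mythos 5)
Your proposal is correct and follows essentially the same route as the paper: verification of the Babu\v ska--Brezzi hypotheses, with the inf-sup for $\b_N$ taken over from Lemma \ref{inf-sup-b} (valid on $\widetilde\X_N$ since the test pair there is $(\widehat\bta,\0)$), and the kernel ellipticity of $\a_N$ obtained by chaining Lemma \ref{quasi-coerciveness-a} with Lemmas \ref{cota-tau} and \ref{otra-cota-tau} and the ellipticity \eqref{W-eliptico} of $\W$ on $\mathbf H^{1/2}_0(\Gamma)$. The final a priori bound via $\F_N=0$ and $\|\G_N\|\le\|\f\|_{0,\Omega^-}$ matches the paper's conclusion.
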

\begin{proof}
It reduces to verify the hypotheses of the classical Babu\v ska-Brezzi
theory.  The boundedness of $\a_N$ and $\b_N$ was already noticed at the
beginning of this section. Also, we observe that Lemma \ref{inf-sup-b}
establishes the required inf-sup condition for $\b_N$. Next, because of the
replacement of the space ${\X_N}$ by ${\widetilde{\X}_N}$ (cf.
\eqref{space-X-0}), the kernel of the operator induced by $\b_N : \widetilde\X_N \times \Y_N \to \R$ becomes now
\[
\widetilde{\mathbf V}_N\,:=\,\Big\{\,(\bta,\bps)\,\in\,\mathbb H_0(\bdiv;\Omega^-)\times\mathbf H^{1/2}_0(\Gamma): 
\quad \bta\,=\,\bta^{\tt t} \qan \bdiv\,\bta \,=\,\0 \qin \Omega^-\,\Big\}\,.
\]
Hence, applying \eqref{eq-quasi-coerciveness-a} (cf. Lemmas \ref{quasi-coerciveness-a}),
\eqref{eq-cota-tau} (cf. Lemma \ref{cota-tau}), \eqref{eq-otra-cota-tau} (cf. Lemma \ref{otra-cota-tau}),
and \eqref{W-eliptico} (cf. Lemma \ref{properties-2}),
we deduce that for any $(\bta,\bps)\,\in\,\widetilde\V_N$ there holds
\[
\begin{array}{l}
\disp
\a_N((\bta,\bps),(\bta,\bps))\,\ge\,
\frac12\,\|\bta^{\tt d}\|^2_{0,\Omega^-}\,+\,\frac12\,\langle \W\,\bps,\bps\rangle_\Gamma\,\ge\,
\frac{c_1}{2}\,\|\bta-\boldsymbol\pi_{_I}\bta\|^2_{0,\Omega^-}\,+\,\frac12\,\langle \W\,\bps,\bps\rangle_\Gamma\\[2ex]
\disp\quad
=\quad \frac{c_1}{2}\,\|\bta-\boldsymbol\pi_{_I}\bta\|^2_{\bdiv;\Omega^-}\,+\,\frac12\,\langle \W\,\bps,\bps\rangle_\Gamma\,\ge\,
\frac{c_1\,c_2}{2}\,\|\bta\|^2_{\bdiv;\Omega^-}\,+\,\frac{\alpha_2}{2}\,\|\bps\|^2_{1/2,\Gamma}\,,
\end{array}
\]
which proves that $\a_N$ is $\widetilde\V_N$-elliptic. In this way, 
the proof is completed by applying the corresponding result from the 
above mentioned theory (see, e.g. \cite[Theorem 1.1, Chapter II]{bf-1991}).

\end{proof}

\medskip
Notice that the result provided by the previous theorem constitutes the natural
extension of the continuous analysis developed in \cite{mss-MC-2011}, which in turn
adapts and modifies the main ideas from \cite{s-SINUM-2009}, to the present mixed
formulation of the three-dimensional exterior Stokes  problem. Furthermore, it is important
to remark at this point, as shown in the proof of Lemma
\ref{quasi-coerciveness-a},  that the $\widetilde\V_N$-ellipticity of $\a_N$, which is
certainly needed for the well-posedness of 
\eqref{dual-mixed-formulation-complete-new}, does require that the component $\bta$
of each pair  $(\bta,\bps)$ in $\widetilde\V_N\,$ be free-divergence and symmetric. In
particular, recall that the symmetry of $\bta$ is employed to replace
$\,\disp\int_{\Omega^-} \nabla \D\,\bps:\bta$ \,by\, $\disp\,\int_{\Omega^-}
\e(\D\,\bps):\bta$ \,in\, equation \eqref{above}, which constitutes a crucial
identity for the remaining part of the proof. Analogously, 
for the analysis of an  associated Galerkin scheme, one would need to
show that $\a_N$ is elliptic at the discrete kernel of $\b_N$, which is given by
\[
\begin{array}{c}
\disp
\widetilde\V_{N,h}\,:=\,\Bigg\{\,(\bta_h,\bps_h)\in\widetilde{\X}_{N,h}\,:=\,
\mathbb H^\bsi_{h,0} \times \mathbf H^\bva_{h,0}: \quad
\int_{\Omega^-} \bv_h \cdot\bdiv \bta_h\,=\,0 \quad \forall\,\bv_h\,\in\, \mathbf L^\u_h\\[2ex]
\disp\qan  \int_{\Omega^-} \bta_h : \bet_h\,=\,0 \quad \forall\,\bet_h\,\in\,\mathbb L^\bch_h    \,\Bigg\}\,,
\end{array}
\]
where $\mathbb H^\bsi_{h,0}$, $\mathbf H^\bva_{h,0}$, $\mathbf L^\u_h$, and $\mathbb
L^\bch_h$ are finite dimensional subspaces of $\mathbb H_0(\bdiv;\Omega^-)$, $\mathbf
H^{1/2}_0(\Gamma)$, $\mathbf L^2(\Omega^-)$, and $\mathbb L^2_{\tt skew}(\Omega^-)$,
respectively. Nevertheless, while it is possible to choose these subspaces so that
the discrete inf-sup condition for $\b_N$ is satisfied and the first equation
defining $\widetilde\V_{N,h}$ yields the components $\bta_h$ of the pairs $(\bta_h,\bps_h)\in
\widetilde\V_{N,h}$ to be free-divergence, no subspaces implying additionally the symmetry of
these components from the second equation defining $\widetilde\V_{N,h}$ are known (at least, up
to the authors' knowledge).  In order to overcome this difficulty, one could
consider Galerkin schemes for the simplified continuous formulation that arises from
\eqref{dual-mixed-formulation-complete-new} after eliminating the vorticity unknown
$\bch$, which means that one looks, from the beginning, for a symmetric stress tensor $\bsi$. The
recent availability of new stable mixed finite element methods for linear elasticity
with strong symmetry allows for the choice of concrete finite element subspaces 
towards this purpose (see, e.g. \cite{afw-acta-2006}, \cite{ac-JSC-2005}). However,
due to the high number of local degrees of freedom involved, this procedure
is still a bit prohibitive. Alternatively, instead of proving the $\widetilde\V_{N,h}$-ellipticity
of $\a_N$, one could try to show that this bilinear form satisfies the discrete inf-sup condition
on $\widetilde\V_{N,h}$, hoping that the symmetry property in question is not needed
along the way. However, this idea is rather an open question that needs to be 
further investigated. In the present paper we suggest a different approach
which makes no use of any strong symmetry property of the discrete tensors. 
More precisely, we show below in Section \ref{section41} that, under a suitable assumption 
on the mesh sizes involved, $\mathbf a_N$ does become uniformly strongly coercive 
on the discrete kernels of $\mathbf b_N$.

\medskip
On the other hand, another procedure that certainly avoids the need of any 
symmetry condition, neither for the continuous nor for the discrete kernels
of $\b_N$, is based on the incorporation of both integral equations \eqref{bie-1-with-bva} 
and \eqref{bie-2-with-bva} into the respective variational formulation.
This coupling method, known as the Costabel \& Han procedure, which has been denoted 
C \& H in Section \ref{section1}, is analyzed with Dirichlet and Neumann boundary 
conditions on $\Gamma_0$ in the forthcoming sections.

\subsection{C \& H coupling with Dirichlet boundary conditions on $\Gamma_0$}\label{section332}
We now follow the Costabel \& Han coupling method (see \cite{c1987}, \cite{h1990}) and incorporate the
boundary integral equations \eqref{bie-1-with-bva} and \eqref{bie-2-with-bva} into the dual-mixed variational 
formulation in $\Omega^-$ given by \eqref{dual-mixed-formulation-Dirichlet}, which assumes the Dirichlet
boundary condition $\,\gamma^-(\u)\,=\,\mathbf g_{_D}\,\in\,\mathbf H^{1/2}(\Gamma_0)$ \,on\, $\Gamma_0$.
More precisely, we replace $\bva$ in the first equation of \eqref{dual-mixed-formulation-Dirichlet} 
by the right hand side of \eqref{bie-1-with-bva}, and simultaneously
add \eqref{bie-2-with-bva} tested against $\bps\,\in\,\mathbf H^{1/2}(\Gamma)$ to the same equation, 
thus yielding a redefinition of the bilinear form $\a_D$ (cf. \eqref{a-D}). In this way, our coupled variational 
formulation reads as follows: Find $((\bsi,\bva),(\u,\bch))\,\in\,{\X_D}\times{\Y_D}$ such that
\begin{equation}\label{dual-mixed-formulation-Dirichlet-complete}
\begin{array}{rcl}
\mathbf a_D((\bsi,\bva),(\bta,\bps))\,+\,\mathbf b_D((\bta,\bps),(\u,\bch)) &=& \F_D(\bta,\bps)
\qquad\forall\,(\bta,\bps)\in \X_D\,,\\[2ex]
\mathbf b_D((\bsi,\bva),(\bv,\bet)) &=& \G_D(\bv,\bet)
\qquad\forall\,(\bv,\bet)\in \Y_D\,,
\end{array}
\end{equation}
where
\[
\mathbf X_D\,:=\,\mathbb H(\bdiv;\Omega^-)\times\mathbf H^{1/2}(\Gamma)\,,
\quad \mathbf Y_D\,:=\, \mathbf L^2(\Omega^-)\times \mathbb L^2_{\tt skew}(\Omega^-) \,,
\]
$\a_D : \X_D \times \X_D \to \R$ \,and\, $\b_D : \X_D \times \Y_D \to \R$ are the bounded bilinear forms
defined by
\begin{equation}\label{a-D-complete}
\begin{array}{c}
\disp
\mathbf a_D((\bsi,\bva),(\bta,\bps)) := \int_{\Omega^-} \bsi^{\tt d} : \bta^{\tt d}
\,+\,\langle \W\,\bva,\bps\rangle_\Gamma\,+\,\left\langle \left(\frac12\,\I\,+\,\K^{\tt t} \right)\,
\gamma^-_\bnu(\bsi),\bps \right\rangle_\Gamma \\[2ex]
\disp
+ \quad \langle \gamma^-_\bnu(\bta),\V \gamma^-_\bnu(\bsi)\rangle_\Gamma \,-\, \left\langle \gamma^-_\bnu(\bta) , \left(\frac12\,\I\,+\,\K \right)\bva \right\rangle_\Gamma  
\end{array}
\end{equation}
and
\begin{equation}\label{b-D-complete}
\mathbf b_D((\bta,\bps),(\bv,\bet))\,:=\,
\int_{\Omega^-} \bv \cdot \bdiv\,\bta\,+\,\int_{\Omega^-} \bet : \bta \,,
\end{equation}
and $\F_D : \X_D \to \R$ and $\G_D : \Y_D \to \R$ are the bounded linear functionals given by
\begin{equation}\label{F-D-complete}
\F_D(\bta,\bps)\,:=\,-\,\langle \gamma^-_\bnu(\bta),\g_{_D}\rangle_{\Gamma_0}\,
\qquad
\mbox{and}
\qquad
\G_D(\bv,\bet)\,:=\,-\,\int_{\Omega^-}\f\cdot\bv \,.
\end{equation}

\medskip
We first let $\V_D$ be the kernel of the bounded linear operator induced by $\mathbf b_D$ 
(cf. \eqref{b-D-complete}), that is 
\[
\mathbf V_D\,:=\,\Big\{\,(\bta,\bps)\,\in\,{\X_D}: \quad \bta\,=\,\bta^{\tt t} \qan \bdiv\,\bta \,=\,\0 
\qin \Omega^-\,\Big\}\,,
\]
and identify the solutions of the homogeneous problem associated with \eqref{dual-mixed-formulation-Dirichlet-complete}.
\begin{lemma}\label{kernel-Dirichlet}
The set of solutions of the homogeneous version of \eqref{dual-mixed-formulation-Dirichlet-complete}
is given by
\[
\Big\{\,\big((\bsi,\bva),(\u,\bch)\big)\,:=\,
\big((\0,\z),(\0,\0)\big): \qquad \z\,\in\,\mathbf{RM}(\Gamma)\,\Big\}\,.
\]
\end{lemma}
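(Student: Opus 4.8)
The plan is to follow the pattern of Lemma~\ref{kernel}, but now exploiting that the Costabel \& Han bilinear form $\a_D$ is \emph{directly} positive semidefinite on the kernel $\mathbf V_D$ of $\b_D$, so that no auxiliary rigid motion has to be invoked to borrow the $\V$- or $\W$-ellipticity. First I would take a solution $\big((\bsi,\bva),(\u,\bch)\big)\in\X_D\times\Y_D$ of the homogeneous version of \eqref{dual-mixed-formulation-Dirichlet-complete}, i.e. with $\f=\0$ and $\g_{_D}=\0$. The second equation immediately forces $(\bsi,\bva)\in\mathbf V_D$, that is $\bsi=\bsi^{\tt t}$ and $\bdiv\,\bsi=\0$ in $\Omega^-$; since $\bch$ is skew-symmetric this also yields $\b_D((\bsi,\bva),(\u,\bch))=0$. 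Then, testing the first equation with $(\bta,\bps)=(\bsi,\bva)$ and using the adjointness identity $\langle(\tfrac12\I+\K^{\tt t})\gamma^-_\bnu(\bsi),\bva\rangle_\Gamma=\langle\gamma^-_\bnu(\bsi),(\tfrac12\I+\K)\bva\rangle_\Gamma$ to cancel the two cross terms in \eqref{a-D-complete}, I would arrive at the clean identity
\[
\|\bsi^{\tt d}\|^2_{0,\Omega^-}\,+\,\langle\W\,\bva,\bva\rangle_\Gamma\,+\,\langle\gamma^-_\bnu(\bsi),\V\,\gamma^-_\bnu(\bsi)\rangle_\Gamma\,=\,0\,,
\]
a sum of three nonnegative quantities. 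Hence $\bsi^{\tt d}=\0$ in $\Omega^-$, and \eqref{W-eliptico-equiv}--\eqref{V-eliptico-equiv} give $\bva\in\mathbf{RM}(\Gamma)$ and $\gamma^-_\bnu(\bsi)\in P_0(\Gamma)\,\bnu$.

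Next I would upgrade this to $\bsi=\0$ and $(\u,\bch)=(\0,\0)$. Since $\bsi^{\tt d}=\0$ and $\bdiv\,\bsi=\0$, and $\Omega^-$ is connected, $\bsi=q\,\I$ for some constant $q$, so $\gamma^-_\bnu(\bsi)=q\,\bnu$. By the kernel characterizations \eqref{kernels-V-and-W} we have $q\,\bnu\in\ker(\V)$ and $\bva\in\ker(\W)=\ker(\tfrac12\I+\K)$, so in the first equation of \eqref{dual-mixed-formulation-Dirichlet-complete} the terms containing $\V$, $\W$ and $\tfrac12\I+\K$ disappear and, recalling that $\b_D((\bta,\bps),(\u,\bch))$ does not depend on $\bps$, what is left for all $(\bta,\bps)\in\X_D$ is $\langle(\tfrac12\I+\K^{\tt t})\gamma^-_\bnu(\bsi),\bps\rangle_\Gamma+\b_D((\bta,\0),(\u,\bch))=0$. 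Choosing $\bta=\0$ with $\bps$ arbitrary gives $(\tfrac12\I+\K^{\tt t})(q\,\bnu)=\0$; since also $q\,\bnu\in\ker(\tfrac12\I-\K^{\tt t})$ by \eqref{kernels-V-and-W} and $(\tfrac12\I+\K^{\tt t})+(\tfrac12\I-\K^{\tt t})=\I$, this forces $q\,\bnu=\0$, i.e. $\bsi=\0$. Choosing instead $\bps=\0$ with $\bta$ arbitrary gives $\b_D((\bta,\0),(\u,\bch))=0$ for every $\bta\in\mathbb H(\bdiv;\Omega^-)$, and the surjectivity of $\bta\mapsto(\bdiv\,\bta,\tfrac12(\bta-\bta^{\tt t}))$ onto $\Y_D$ --- precisely the construction used in the proof of Lemma~\ref{inf-sup-b} --- then yields $(\u,\bch)=(\0,\0)$.

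For the reverse inclusion I would simply verify that, for any $\z\in\mathbf{RM}(\Gamma)$, the tuple $\big((\0,\z),(\0,\0)\big)$ solves the homogeneous problem: the second equation holds trivially, and in the first one every term of $\a_D((\0,\z),(\bta,\bps))$ vanishes because $\W\,\z=\0$ and $(\tfrac12\I+\K)\z=\0$ by \eqref{kernels-V-and-W}, while $\F_D\equiv\0$ in the homogeneous case. The step where I would be especially careful is the cancellation of the two $\K$/$\K^{\tt t}$ cross terms of $\a_D$ on the diagonal: this is what turns the quadratic form into a sum of nonnegative pieces and is the real heart of the direct Costabel \& Han argument, everything else being routine manipulation with the kernels of $\V$, $\W$ and $\tfrac12\I\pm\K^{\tt t}$ and the surjectivity already established for the Neumann case. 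I would also keep in mind that, unlike in the Neumann setting, no zero-mean constraint on $\tr\bsi$ is imposed here, so the possibility $\bsi=q\,\I$ must be excluded explicitly, as done above.
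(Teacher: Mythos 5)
Your argument is correct and follows essentially the same route as the paper's proof: second equation forces $(\bsi,\bva)\in\mathbf V_D$, testing with $(\bsi,\bva)$ and cancelling the cross terms via the adjointness of $\K$ and $\K^{\tt t}$ yields the sum of nonnegative terms, whence $\bsi=q\,\I$ and $\bva\in\mathbf{RM}(\Gamma)$, after which the kernel characterizations \eqref{kernels-V-and-W}, the inf-sup (surjectivity) argument of Lemma \ref{inf-sup-b}, and the nondegeneracy of the remaining $\bnu$-term give $\bsi=\0$ and $(\u,\bch)=(\0,\0)$, with the converse checked directly. The only cosmetic differences are the order of the last two steps and that you extract $q=0$ by adding $(\frac12\I+\K^{\tt t})(q\bnu)=\0$ to $(\frac12\I-\K^{\tt t})(q\bnu)=\0$, whereas the paper first simplifies $(\frac12\I+\K^{\tt t})\bnu=\bnu$ and then uses $q\,\langle\bnu,\bps\rangle_\Gamma=0$ for all $\bps$; these are equivalent.
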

\begin{proof}
Let $\big((\bsi,\bva),(\u,\bch)\big)\,\in\,{\X_D} \times {\Y_D}$ be a solution of 
\eqref{dual-mixed-formulation-Dirichlet-complete} with $\g_{_D} = \0$ on $\Gamma_0$
and $\f\,=\,\0$ in $\Omega^-$. It is clear from the second equation that $(\bsi,\bva)\,\in\,\mathbf V_D$,
\,that is\, $\bsi\,=\,\bsi^{\tt t}$ \,and\, $\bdiv \bsi\,=\,\0$ \,in\, $\Omega^-$.
Then, taking in particular $(\bta,\bps)\,=\,(\bsi,\bva)$ in the first equation,
recalling that $\K^{\tt t}$ is the adjoint of $\K$,
and then applying the inequalities \eqref{eq-cota-tau} (cf. Lemma 
\ref{cota-tau}), \eqref{W-eliptico-equiv}, and \eqref{V-eliptico-equiv}, we find that
\begin{equation}\label{eq-a-D-0}
\begin{array}{rl}
\disp
0 &\,=\,\mathbf a_D\big((\bsi,\bva),(\bsi,\bva)\big)\,=\,
\|\bsi^{\tt d}\|^2_{0,\Omega^-}\,+\,\langle \W\,\bva,\bva\rangle_\Gamma\,+\,
\langle\gamma^-_\bnu(\bsi),\V \gamma^-_\bnu(\bsi)\rangle_\Gamma\\[2ex]
&
\disp
\ge \quad c_1\|\bsi-\boldsymbol\pi_{_I}\bsi\|^2_{0,\Omega^-}\,
+\,\tilde\alpha_2\,\|\bva-\boldsymbol\pi_{_{RM}}\bva\|^2_{1/2,\Gamma}\,+\,
\tilde\alpha_1\,\|\gamma^-_\bnu(\bsi)-\boldsymbol\pi_{_\nu} \gamma^-_\bnu(\bsi)\|^2_{-1/2,\Gamma}\,.
\end{array}
\end{equation}
Therefore $\bsi\,=\,c\,\I$ \,and\, $\bva\,=\,\mathbf z\in \mathbf{RM}(\Gamma)$.
As a consequence, and using the characterization of the kernels of $\V$ and $\W$ 
given by \eqref{kernels-V-and-W}, we find that the first equation of the homogeneous 
\eqref{dual-mixed-formulation-Dirichlet-complete} becomes 
\[
c\,\langle \bnu,\bps\rangle_\Gamma\,+\,\b_D((\bta,\bps),(\u,\bch))\,=\,0\qquad\forall\,(\bta,\bps)\,\in\,\X_D\,.
\]
Then, taking $\bps\,=\,\0$ in the above equation, and using the inf-sup condition \eqref{eq-inf-sup-b-simple},
which is possible in this case thanks to the inclusion $\mathbb H_0(\bdiv;\Omega^-)\,\subseteq\,\mathbb H(\bdiv;\Omega^-)$
and the fact that the expressions defining $\b_N$ and $\b_D$ coincide, we deduce that $(\u,\bch)\,=\,(\0,\0)$.
In this way, we obtain that for any $\bps\,\in\,\mathbf H^{1/2}(\Gamma)$ there holds 
$\,c\,\langle \bnu,\bps\rangle_\Gamma\,=\,0$, which necessarily implies that $c\,=\,0$, and thus $\,\bsi\,=\,\0$.
Conversely, it is not difficult to see, using again the characterization of $\mathrm{ker}\,(\W)$ 
(cf. \eqref{kernels-V-and-W}), that for any $\z\in \mathbf{RM}(\Gamma)$, 
$\,\big((\bsi,\bva),(\u,\bch)\big)\,:=\,\big((\0,\z),(\0,\0)\big)$ solves the 
homogeneous version of \eqref{dual-mixed-formulation-Dirichlet-complete}.

\end{proof}

\medskip
Similarly as for the analysis in Section \ref{section331},
and in order to guarantee the unique solvability of the coupled problem \eqref{dual-mixed-formulation-Dirichlet-complete}, 
we now look for the solution $((\bsi,\bva),(\u,\bch))$ in the space ${\widetilde{\X}_D}\times{\Y_D}$, where
\begin{equation}\label{tilde-X-D}
\widetilde{\X}_D\,:=\,\mathbb H(\bdiv;\Omega^-)\times\mathbf H^{1/2}_0(\Gamma)\,,
\end{equation}
which yields the kernel of the operator defined by $\b_D : \widetilde\X_D \times \Y_D \to \R$ to become
\[
\widetilde\V_D\,:=\,\Big\{\,(\bta,\bps)\,\in\,\widetilde \X_D : 
\quad \bta\,=\,\bta^{\tt t} \qan \bdiv\,\bta \,=\,\0 \qin \Omega^-\,\Big\}\,.
\]
In turn, thanks again to the characterization of the kernels (cf. \eqref{kernels-V-and-W}) and the 
symmetry-type property of $\W$, we deduce that
it suffices to require the first equation of \eqref{dual-mixed-formulation-Dirichlet-complete} 
for any $(\bta,\bps) \in{\widetilde{\X}_D}$. Therefore, instead of \eqref{dual-mixed-formulation-Dirichlet-complete},
we now look for $((\bsi,\bva),(\u,\bch))\,\in\,{\widetilde{\X}_D}\times{\Y_D}$ such that
\begin{equation}\label{dual-mixed-formulation-Dirichlet-complete-new}
\begin{array}{rcl}
\mathbf a_D((\bsi,\bva),(\bta,\bps))\,+\,\mathbf b_D((\bta,\bps),(\u,\bch)) &=& \F_D(\bta,\bps)
\qquad\forall\,(\bta,\bps)\,\in\, {\widetilde{\X}_D}\,,\\[2ex]
\mathbf b_D((\bsi,\bva),(\bv,\bet)) &=& \G_D(\bv,\bet)
\qquad\forall\,(\bv,\bet)\,\in\, {\Y_D}\,,
\end{array}
\end{equation}

\medskip
Note, according to the definition of $\a_D$ (cf. \eqref{a-D-complete}) and the identity
$\V(\bnu)\,=\,\0$ (cf. \eqref{kernels-V-and-W}), that for any $c\in\R$ there holds
$\,\a_D((c\,\I,\0),(c\,\I,\0))\,=\,0$, which proves that $\a_D$ is not $\widetilde\V_D$-elliptic.
However, the following lemma establishes the weak-coerciveness of $\a_D$ on this kernel. 
\begin{lemma}\label{weak-coercive-a-D}
The bilinear form $\a_D:\widetilde\V_D\times \widetilde\V_D \to \R$ defines an invertible operator 
$\mathbf A_D:\widetilde \V_D\to \widetilde\V_D$.
\end{lemma}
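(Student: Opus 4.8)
The plan is to show that the bilinear form $\a_D$ restricted to $\widetilde\V_D\times\widetilde\V_D$ satisfies the two inf-sup conditions of the Babu\v ska--Ne\v cas theory, i.e. that the induced operator $\mathbf A_D:\widetilde\V_D\to\widetilde\V_D$ is bounded below and has dense range (equivalently, that both $\mathbf A_D$ and $\mathbf A_D^\ast$ are injective with closed range). Since the excerpt has already noted that $\a_D$ is not elliptic on $\widetilde\V_D$ — the one-dimensional obstruction being the line $\{(c\,\I,\0):c\in\R\}$, on which $\a_D$ vanishes identically because $\V(\bnu)=\0$ — the natural route is a \emph{Fredholm-type} argument adapted to the explicit structure: decompose each $\bta\in\widetilde\V_D$ via the splitting $\bta=(\bta-\boldsymbol\pi_{_I}\bta)+\boldsymbol\pi_{_I}\bta\in\widetilde{\mathbb H}(\bdiv;\Omega^-)\oplus P_0(\Omega^-)\I$, and each $\bps\in\mathbf H^{1/2}_0(\Gamma)$ stays in the "good" complement where $\W$ is elliptic. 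On the product of the good parts one expects a Gårding/ellipticity estimate from Lemma~\ref{cota-tau}, the $\widetilde\V_D$-ellipticity ingredients used in the proof of Lemma~\ref{kernel-Dirichlet}, and the ellipticity of $\W$ on $\mathbf H^{1/2}_0(\Gamma)$ and of $\V$ on $\mathbf H^{-1/2}_0(\Gamma)$; the only degree of freedom left uncontrolled is the scalar $d$ in $\boldsymbol\pi_{_I}\bta=d\,\I$.

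Concretely, I would first record the key algebraic fact: for $(\bta,\bps)\in\widetilde\V_D$, since $\bta$ is symmetric and divergence-free in $\Omega^-$, the integration-by-parts identity from the proof of Lemma~\ref{W-psi-psi}/Lemma~\ref{quasi-coerciveness-a} gives $\langle\gamma^-_\bnu(\bta),(\tfrac12\I+\K)\bps\rangle_\Gamma=\langle\gamma^-_\bnu(\bta),\gamma^+(\D\bps)\rangle_\Gamma$, so that the cross term in $\a_D$ can be rewritten and absorbed. This is exactly the place where symmetry and zero divergence are essential, and it mirrors \eqref{above}. With this, evaluate the diagonal $\a_D((\bta,\bps),(\bta,\bps))$ and, using Cauchy--Schwarz together with \eqref{eq-W-psi-psi}, obtain a lower bound of the form $\tfrac12\|\bta^{\tt d}\|^2_{0,\Omega^-}+\tfrac12\langle\W\bps,\bps\rangle_\Gamma+\langle\gamma^-_\bnu(\bta),\V\gamma^-_\bnu(\bta)\rangle_\Gamma\ge c\big(\|\bta-\boldsymbol\pi_{_I}\bta\|^2_{\bdiv;\Omega^-}+\|\bps\|^2_{1/2,\Gamma}+\|\gamma^-_\bnu(\bta)-\boldsymbol\pi_{_\nu}\gamma^-_\bnu(\bta)\|^2_{-1/2,\Gamma}\big)$, exactly as in \eqref{eq-a-D-0}. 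This controls everything except $\boldsymbol\pi_{_I}\bta=d\,\I$.

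To close the gap I would construct, for each $(\bta,\bps)\in\widetilde\V_D$, an explicit test direction that sees the scalar $d$. The candidate is $(\bze,\mathbf 0)$ with $\bze$ a fixed symmetric divergence-free tensor in $\widetilde\V_D$ normalized so that $\langle\gamma^-_\bnu(\bze),\bnu\rangle_\Gamma\neq 0$; note $\langle\gamma^-_\bnu(d\,\I),\text{(anything with }\bnu\text{-component)}\rangle$ is what detects $d$, and by the divergence theorem $\langle\gamma^-_\bnu(\I),\mathbf r\rangle_\Gamma=\langle\bnu,\mathbf r\rangle_\Gamma$ on the outer part — combined with \eqref{new4.24} and the fact that $\partial\Omega^-=\Gamma_0\cup\Gamma$, one gets a nonzero pairing. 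Evaluating $\a_D((\bta,\bps),(\bze,\mathbf 0))$ isolates a term proportional to $d$ plus terms already bounded by the good part, which yields a supplementary inf-sup estimate for $d$. Adding a small multiple of this to the diagonal estimate gives the full lower bound $\|\mathbf A_D(\bta,\bps)\|\ge\beta\|(\bta,\bps)\|_{\widetilde\V_D}$. For the adjoint condition one argues symmetrically: $\a_D$ is "almost symmetric" — its only asymmetry is the $\K$ vs.\ $\K^{\tt t}$ swap in the cross terms, which the identity above converts into the \emph{same} expression on $\widetilde\V_D$ — so in fact $\a_D$ is \emph{symmetric} on $\widetilde\V_D\times\widetilde\V_D$ (this should be checked carefully, using $\langle\K^{\tt t}\brh,\bps\rangle=\langle\brh,\K\bps\rangle$ and that $\W$ is selfadjoint), and then injectivity of $\mathbf A_D$ already established via Lemma~\ref{kernel-Dirichlet}'s argument upgrades, by the lower bound, to invertibility.

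\textbf{Main obstacle.} The delicate point is the control of the scalar component $d=\boldsymbol\pi_{_I}\bta$: the bilinear form genuinely degenerates on $P_0(\Omega^-)\I$ within $\widetilde\V_D$, so one must \emph{leave that subspace} when testing, and verify that the auxiliary tensor $\bze$ really produces a nonvanishing pairing $\langle\gamma^-_\bnu(\bze),\bnu\rangle_\Gamma$ while still lying in $\widetilde\V_D$ (symmetric, divergence-free, mean-zero trace) — here one leans on the decomposition \eqref{decomp-hdiv-1}, on \eqref{new4.24}, and on the Lipschitz geometry with the \emph{two} boundary components $\Gamma_0$ and $\Gamma$. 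Establishing the symmetry of $\a_D$ on $\widetilde\V_D$ (so that the range condition is automatic from the lower bound) is the other subtle bookkeeping step, since it relies on converting the $\K$-term via integration by parts using precisely the constraints defining $\widetilde\V_D$.
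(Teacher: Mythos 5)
Your diagonal (G\r arding-type) estimate coincides with the paper's first step, but the device you propose for recovering the uncontrolled scalar $d$ in $\boldsymbol\pi_{_I}\bta=d\,\I$ does not work, and this is the heart of the lemma. Testing with a pair of the form $(\bze,\0)$ cannot detect $d$: in $\a_D((\bta,\bps),(\bze,\0))$ the given element enters only through $\bta^{\tt d}$, which annihilates $d\,\I$, and through $\V\,\gamma^-_\bnu(\bta)$, where $\V(d\,\bnu)=\0$ by \eqref{kernels-V-and-W}; the remaining two terms of \eqref{a-D-complete} are paired against the test's boundary component, which you have set to $\0$. Putting the test in the other slot fails for the same reason, since then $d$ could only be seen through $\langle d\,\bnu,\V\,\gamma^-_\bnu(\bze)\rangle_\Gamma=d\,\langle \V\,\bnu,\gamma^-_\bnu(\bze)\rangle_\Gamma=0$ ($\V$ selfadjoint). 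The only term of $\a_D$ that sees the $P_0(\Omega^-)\,\I$ component is $\langle(\frac12\,\I+\K^{\tt t})\gamma^-_\bnu(\cdot),\bps\rangle_\Gamma$, because $(\frac12\,\I+\K^{\tt t})\bnu=\bnu$; hence the auxiliary test must carry a nonzero boundary density $\bps$ with $\langle\bnu,\bps\rangle_\Gamma\neq 0$, which is exactly what the paper does: in the proof of Lemma \ref{weak-coercive-a-D} the homogeneous solution is tested against $(\0,\bps)$ to conclude $c=0$ via \eqref{new4.24} and \eqref{decompositions}, and in Lemma \ref{weak-coercive-a-D-direct} the perturbation is made in the second component, $(\bsi,\bva+\delta\,c(\bsi)\widetilde\bps)$, not in the tensor one. (In addition, your normalization $\langle\gamma^-_\bnu(\bze),\bnu\rangle_\Gamma\neq 0$ is not even well defined on a merely Lipschitz $\Gamma$, since $\bnu\notin\mathbf H^{1/2}(\Gamma)$ in general.)

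A second inaccuracy is the claimed symmetry of $\a_D$ on $\widetilde\V_D\times\widetilde\V_D$: swapping the arguments in \eqref{a-D-complete} reverses the sign of the two coupling terms, i.e. $\langle(\frac12\,\I+\K^{\tt t})\gamma^-_\bnu(\bsi),\bps\rangle_\Gamma-\langle\gamma^-_\bnu(\bta),(\frac12\,\I+\K)\bva\rangle_\Gamma$ is skew under the swap, and the constraints defining $\widetilde\V_D$ (symmetry and zero divergence, with no condition on $\Gamma_0$ in this Dirichlet case) do not remove this skew part. So you cannot obtain the adjoint inf-sup condition by symmetry; this gap is milder, since the skew part vanishes on the diagonal and the same G\r arding bound holds for the transposed form, but as written your surjectivity argument is incomplete. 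The paper sidesteps both difficulties at once: the inequality $\a_D((\bsi,\bva),(\bsi,\bva))+\|\boldsymbol\pi_{_I}\bsi\|^2_{0,\Omega^-}\ge C\{\|\bsi\|^2_{\bdiv;\Omega^-}+\|\bva\|^2_{1/2,\Gamma}\}$ exhibits $\mathbf A_D$ as a finite-rank (hence compact) perturbation of an invertible operator, so it is Fredholm of index zero and only injectivity must be verified --- which is precisely where the test $(\0,\bps)$ and \eqref{new4.24} enter.
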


\begin{proof}
Using that $\K^{\tt t}$ is the adjoint of $\K$ and the inequalities \eqref{eq-cota-tau}, \eqref{W-eliptico}, and \eqref{V-eliptico-equiv},
it is easy to show that
\begin{equation}
\mathbf a_D\big((\bsi,\bva),(\bsi,\bva)\big)+\| \boldsymbol\pi_{_I}\bsi\|_{0,\Omega^-}^2 \,\ge\,
C\,\Big\{\|\bsi\|^2_{\bdiv;\Omega^-}\,+\,\|\bva\|^2_{1/2,\Gamma}\Big\}
\qquad\forall\,(\bsi,\bva)\,\in\,\widetilde\V_D.
\end{equation}
Therefore, the operator $\mathbf A_D$ is Fredholm of index zero. If $\mathbf A_D((\bsi,\bva))=0$, then (by the same arguments)
\begin{equation}
0=\mathbf a_D\big((\bsi,\bva),(\bsi,\bva)\big)\ge C\,
\Big\{ \|\bsi-\boldsymbol\pi_{_I}\bsi\|^2_{\bdiv;\Omega^-}\,+\,\|\bva\|^2_{1/2,\Gamma}\Big\},
\end{equation}
which implies that $\bva=0$ and $\bsi=c\I$ for some $c\in \R$. Therefore $\bsi^{\tt d}=0$, 
$\gamma^-_\bnu(\bsi)=c\bnu$ and, using \eqref{kernels-V-and-W}
\[
0=\mathbf a_D\big((\bsi,\bva),(\0,\bps)\big)=\,c\left\langle \left(\frac12\,\I\,+\,\K^{\tt t} \right)\,
\bnu,\bps \right\rangle_\Gamma =\langle\bnu,\bps\rangle_\Gamma \qquad \forall \bps\in \H^{1/2}_0(\Gamma).
\]
This identity, \eqref{new4.24}, and \eqref{decompositions} imply that $c=0$. We have thus proved that $\mathbf A_D$ is injective, and therefore invertible.
\end{proof}

\medskip
The well-posedness of \eqref{dual-mixed-formulation-Dirichlet-complete-new} can now be established.
\begin{theorem}
Given $\g_{_D}\,\in\,\mathbf H^{1/2}(\Gamma_0)$ and $\f\,\in\,\mathbf L^2(\Omega^-)$, there exists a unique 
$((\bsi,\bva),(\u,\bch))\,\in\,{\widetilde{\X}_D}\times{\Y_D}$ 
solution to \eqref{dual-mixed-formulation-Dirichlet-complete-new}. In addition, there exists $C > 0$ such that
\[
\|((\bsi,\bva),(\u,\bch))\|_{{\X}_D\times {\Y_D}}\,\,\le\,\,
C\,\Big\{\,\|\g_{_D}\|_{1/2,\Gamma_0}\,+\,\|\f\|_{0,\Omega^-}\,\Big\}\,.
\]
\end{theorem}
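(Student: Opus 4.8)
\medskip\noindent
The plan is to deduce the unique solvability of \eqref{dual-mixed-formulation-Dirichlet-complete-new} from the generalized Babu\v ska--Brezzi theory (see, e.g., \cite{g-SPRINGER-2014}), in the variant that only demands the bilinear form associated with the kernel of the constraint to induce an \emph{isomorphism}, rather than to be coercive --- which is exactly what is needed here, since it has already been observed that $\mathbf a_D$ fails to be $\widetilde\V_D$-elliptic, as $\mathbf a_D((c\,\I,\0),(c\,\I,\0))=0$ for every $c\in\R$. Accordingly, it suffices to verify: (i) boundedness of $\mathbf a_D$ on $\widetilde\X_D\times\widetilde\X_D$ and of $\mathbf b_D$ on $\widetilde\X_D\times\Y_D$; (ii) the inf-sup condition for $\mathbf b_D$ on $\widetilde\X_D\times\Y_D$; and (iii) that the operator $\mathbf A_D:\widetilde\V_D\to\widetilde\V_D$ induced by $\mathbf a_D|_{\widetilde\V_D\times\widetilde\V_D}$ is a bounded bijection.

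\medskip\noindent
Item (i) was already recorded when $\mathbf a_D$ and $\mathbf b_D$ were introduced; it follows from the continuity of the normal trace $\gamma^-_\bnu$ on $\partial\Omega^-$ together with the boundedness of $\V$, $\K$, $\K^{\tt t}$ and $\W$ provided by Lemma \ref{properties-2}. For (ii), note that $\mathbf b_D$ is given by the very same expression as $\mathbf b_N$, so the operator $\mathbf B_D$ it induces is again $(\bta,\bps)\mapsto\big(\bdiv\,\bta,\tfrac12(\bta-\bta^{\tt t})\big)$; since $\mathbb H_0(\bdiv;\Omega^-)\subseteq\mathbb H(\bdiv;\Omega^-)$ and $\0\in\mathbf H^{1/2}_0(\Gamma)$, for each $(\bv,\bet)\in\Y_D$ the element $(\widehat\bta,\0)$ built in the proof of Lemma \ref{inf-sup-b} lies in $\widetilde\X_D$ and satisfies $\mathbf B_D((\widehat\bta,\0))=(\bv,\bet)$ with $\|\widehat\bta\|_{\bdiv;\Omega^-}\le C\,\|(\bv,\bet)\|_{\Y_D}$, whence the required estimate, in the form \eqref{eq-inf-sup-b-simple} with $\widetilde\X_D$ in place of $\X_N$, holds. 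Finally, (iii) is precisely the content of Lemma \ref{weak-coercive-a-D}: it is shown there that $\mathbf A_D$ is injective, while the G\aa rding-type inequality obtained along the way makes $\mathbf A_D$ Fredholm of index zero and hence surjective; being a bounded bijection between Hilbert spaces, its inverse is bounded by the open mapping theorem, which yields both one-sided inf-sup inequalities for $\mathbf a_D$ on $\widetilde\V_D$ that the abstract result requires.

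\medskip\noindent
Once (i)--(iii) are in place, the generalized Babu\v ska--Brezzi theorem provides a unique $((\bsi,\bva),(\u,\bch))\in\widetilde\X_D\times\Y_D$ solving \eqref{dual-mixed-formulation-Dirichlet-complete-new}, together with a continuous-dependence bound controlled by $\|\F_D\|$ and $\|\G_D\|$. The stated a priori estimate then follows by bounding these functionals: by \eqref{F-D-complete} and the continuity of $\gamma^-_\bnu$ on $\Gamma_0$ one has $|\F_D(\bta,\bps)|\le\|\gamma^-_\bnu(\bta)\|_{-1/2,\Gamma_0}\,\|\g_{_D}\|_{1/2,\Gamma_0}\le C\,\|\g_{_D}\|_{1/2,\Gamma_0}\,\|(\bta,\bps)\|_{\X_D}$, while $|\G_D(\bv,\bet)|\le\|\f\|_{0,\Omega^-}\,\|\bv\|_{0,\Omega^-}$; combining these with the stability constant of the abstract result --- and noting that the norms of $\X_D\times\Y_D$ and $\widetilde\X_D\times\Y_D$ agree on the latter subspace --- yields the asserted inequality.

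\medskip\noindent
As this outline indicates, the substantive work has been relocated into Lemmas \ref{inf-sup-b} and, above all, \ref{weak-coercive-a-D}; the only delicate point at this stage is to invoke the \emph{weakly coercive} form of the saddle-point theory rather than its classical kernel-elliptic version. Had Lemma \ref{weak-coercive-a-D} not been available, the genuine obstacle would have been exactly that Fredholm-plus-injectivity argument, and in particular the step that rules out the degenerate mode $(c\,\I,\0)$ by means of \eqref{new4.24} and the stable decomposition \eqref{decompositions} of $\mathbf H^{1/2}(\Gamma)$.
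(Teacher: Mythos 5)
Your proposal is correct and follows essentially the same route as the paper: boundedness of $\a_D$ and $\b_D$, the inf-sup condition for $\b_D$ inherited from \eqref{eq-inf-sup-b-simple} since the expressions for $\b_D$ and $\b_N$ coincide and $\mathbb H_0(\bdiv;\Omega^-)\subseteq\mathbb H(\bdiv;\Omega^-)$, and the invertibility of $\mathbf A_D$ on $\widetilde\V_D$ from Lemma \ref{weak-coercive-a-D}, all fed into the (weakly coercive) Babu\v ska--Brezzi theorem of \cite[Theorem 1.1, Chapter II]{bf-1991}. You merely make explicit the role of Lemma \ref{weak-coercive-a-D} and the bounds on $\F_D$ and $\G_D$, which the paper leaves implicit.
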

\begin{proof}
It is clear from the beginning of the section that $\a_D$ and $\b_D$ are bounded bilinear forms.
In addition, the continuous inf-sup condition for $\b_D : \widetilde\X_D \times \Y_D \to \R$
follows straightforwardly from \eqref{eq-inf-sup-b-simple} by noting that the expressions
defining $\b_D$ (cf. \eqref{b-D-complete}) and $\b_N$ (cf. \eqref{b-N-complete}) 
coincide and that certainly $\mathbb H_0(\bdiv;\Omega^-)
\,\subseteq\,\mathbb H(\bdiv;\Omega^-)$. Consequently, the proof is completed by applying the 
corresponding result from the Babu\v ska-Brezzi theory 
(see, e.g. \cite[Theorem 1.1, Chapter II]{bf-1991}).

\end{proof}

\medskip
The proof of weak coerciveness of $\a_D$ (cf. Lemma \ref{weak-coercive-a-D}) uses a compactness argument starting from a G\r arding inequality. At the discrete level this would impose a certain restriction on the approximation properties of the space to be fine enough. Note that an even more indirect argument (by contradiction) was used in \cite[Lemma 4.3]{bcs1996} for the analysis of the 
coupling of mixed-FEM and BEM as applied to the elasticity problem. However, a mesh-dependent norm 
had to be employed there instead of the usual norm of $\mathbb H(\bdiv;\Omega^-)$, and it is not clear 
from the proof whether the constants involved depend or not on $h$. In the next Lemma we are going to give a refined version of the weak-coercivity that can be inherited at the discrete level.

\begin{lemma}\label{weak-coercive-a-D-direct}
Let $\widetilde\bps$ be an arbitrary but fixed element in $\mathbf H^{1/2}(\Gamma)$ such that
$\langle \bnu,\widetilde\bps\rangle_\Gamma \,>\,0$. Consider the functional $c:\mathbb H(\bdiv,\Omega^-)\to \R$ given by
\[
c(\bsi):=\frac1{|\Omega^-|} \int_{\Omega^-} \tr\bsi.
\]
Then there exist positive $C$ and $\delta$, depending on $\widetilde\bps$, such that
\[
\a_D\big((\bsi,\bva),(\bsi,\bva+\delta c(\bsi) \widetilde\bps)\big)\ge C \|(\bsi,\bva)\|_{\X_D}^2 \qquad
\forall (\bsi,\bva)\in \widetilde\V_D.
\]
\end{lemma}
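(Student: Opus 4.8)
The plan is to compute the left-hand side explicitly as a perturbation of $\a_D\big((\bsi,\bva),(\bsi,\bva)\big)$, to extract from the perturbation a strictly positive multiple of $c(\bsi)^2$, and then to absorb the remaining cross terms by Young's inequality. Throughout, $(\bsi,\bva)\in\widetilde\V_D$, so that $\bsi=\bsi^{\tt t}$, $\bdiv\,\bsi=\0$ in $\Omega^-$, $\bsi^{\tt d}=\bsi-\tfrac13\tr(\bsi)\,\I$, and $\bva\in\mathbf H^{1/2}_0(\Gamma)$. First I would record two elementary identities obtained directly from \eqref{a-D-complete} and from the fact that $\K^{\tt t}$ is the adjoint of $\K$: on the one hand, as already noted in \eqref{eq-a-D-0},
\[
\a_D\big((\bsi,\bva),(\bsi,\bva)\big)\,=\,\|\bsi^{\tt d}\|^2_{0,\Omega^-}\,+\,\langle\W\,\bva,\bva\rangle_\Gamma\,+\,\langle\gamma^-_\bnu(\bsi),\V\,\gamma^-_\bnu(\bsi)\rangle_\Gamma\,,
\]
and on the other hand, taking $(\bta,\bps)=(\0,\widetilde\bps)$ in the second slot (so $\bta^{\tt d}=\0$ and $\gamma^-_\bnu(\bta)=\0$),
\[
\a_D\big((\bsi,\bva),(\0,\widetilde\bps)\big)\,=\,\langle\W\,\bva,\widetilde\bps\rangle_\Gamma\,+\,\Big\langle\Big(\tfrac12\,\I+\K^{\tt t}\Big)\gamma^-_\bnu(\bsi),\widetilde\bps\Big\rangle_\Gamma\,.
\]
By bilinearity, $\a_D\big((\bsi,\bva),(\bsi,\bva+\delta c(\bsi)\widetilde\bps)\big)$ is the sum of the first expression and $\delta c(\bsi)$ times the second.

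The key point is the algebraic structure of the last inner product. Using the decomposition \eqref{tilde-hbdiv}--\eqref{decomp-hdiv-1} I would write $\bsi=\bsi_0+d\,\I$ with $\bsi_0\in\widetilde{\mathbb H}(\bdiv;\Omega^-)$ and $d\in\R$; since $\tr\,\I=3$ this gives $d=\tfrac13\,c(\bsi)$, and moreover $\boldsymbol\pi_{_I}\bsi_0=\0$, $\bdiv\,\bsi_0=\0$, $\bsi_0^{\tt d}=\bsi^{\tt d}$, and $\gamma^-_\bnu(\bsi)=\gamma^-_\bnu(\bsi_0)+d\,\bnu$ on $\Gamma$. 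Because $\ker\big(\tfrac12\,\I-\K^{\tt t}\big)=P_0(\Gamma)\,\bnu$ (cf. \eqref{kernels-V-and-W}) we have $\big(\tfrac12\,\I+\K^{\tt t}\big)\bnu=\bnu$, whence
\[
\Big\langle\Big(\tfrac12\,\I+\K^{\tt t}\Big)\gamma^-_\bnu(\bsi),\widetilde\bps\Big\rangle_\Gamma\,=\,\Big\langle\Big(\tfrac12\,\I+\K^{\tt t}\Big)\gamma^-_\bnu(\bsi_0),\widetilde\bps\Big\rangle_\Gamma\,+\,d\,\langle\bnu,\widetilde\bps\rangle_\Gamma\,.
\]
Consequently the perturbation term $\delta c(\bsi)\,\a_D\big((\bsi,\bva),(\0,\widetilde\bps)\big)$ equals $3\,\delta\,d^2\,\langle\bnu,\widetilde\bps\rangle_\Gamma$ --- a strictly positive multiple of $d^2$, by the standing hypothesis $\langle\bnu,\widetilde\bps\rangle_\Gamma>0$ --- plus the two cross terms $3\,\delta\,d\,\langle\W\,\bva,\widetilde\bps\rangle_\Gamma$ and $3\,\delta\,d\,\langle(\tfrac12\,\I+\K^{\tt t})\gamma^-_\bnu(\bsi_0),\widetilde\bps\rangle_\Gamma$.

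To close the estimate I would combine: (i) Lemma \ref{cota-tau} applied to $\bsi_0$, which gives $\|\bsi^{\tt d}\|^2_{0,\Omega^-}\ge c_1\,\|\bsi_0\|^2_{\bdiv;\Omega^-}$ (recall $\boldsymbol\pi_{_I}\bsi_0=\0$ and $\bdiv\,\bsi_0=\0$); (ii) \eqref{W-eliptico}, which applies directly since $\bva\in\mathbf H^{1/2}_0(\Gamma)$, giving $\langle\W\,\bva,\bva\rangle_\Gamma\ge\alpha_2\,\|\bva\|^2_{1/2,\Gamma}$; (iii) the positivity $\langle\gamma^-_\bnu(\bsi),\V\,\gamma^-_\bnu(\bsi)\rangle_\Gamma\ge0$; and (iv) boundedness of $\W$, of $\tfrac12\,\I+\K^{\tt t}$, and of the normal trace ($\|\gamma^-_\bnu(\bsi_0)\|_{-1/2,\Gamma}\le C\,\|\bsi_0\|_{\bdiv;\Omega^-}$), which bound the two cross terms by $\delta\,C\,|d|\,\big(\|\bsi_0\|_{\bdiv;\Omega^-}+\|\bva\|_{1/2,\Gamma}\big)\,\|\widetilde\bps\|_{1/2,\Gamma}$. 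A Young inequality then absorbs $\tfrac{c_1}{2}\|\bsi_0\|^2_{\bdiv;\Omega^-}$ and $\tfrac{\alpha_2}{2}\|\bva\|^2_{1/2,\Gamma}$ and leaves the coefficient $\big(3\,\delta\,\langle\bnu,\widetilde\bps\rangle_\Gamma-\delta^2 C'\big)\,d^2$, with $C'$ depending only on $\widetilde\bps$ and on $c_1$, $\alpha_2$ and the operator norms appearing above; choosing $\delta$ small enough (depending on $\widetilde\bps$) keeps this coefficient bounded below by $\delta\,\langle\bnu,\widetilde\bps\rangle_\Gamma>0$. Finally, from $\|\bsi\|^2_{\bdiv;\Omega^-}=\|\bsi_0+d\,\I\|^2_{0,\Omega^-}\le 2\,\|\bsi_0\|^2_{\bdiv;\Omega^-}+6\,|\Omega^-|\,d^2$ one converts the resulting lower bound into $C\,\big(\|\bsi\|^2_{\bdiv;\Omega^-}+\|\bva\|^2_{1/2,\Gamma}\big)=C\,\|(\bsi,\bva)\|^2_{\X_D}$, which is the assertion.

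The only genuinely delicate point I expect is the identity $\big(\tfrac12\,\I+\K^{\tt t}\big)\bnu=\bnu$: it is precisely what makes the perturbation $\delta c(\bsi)\,\widetilde\bps$ detect the constant component $d\,\I$ of $\bsi$, which is the part of $\bsi$ not controlled by $\|\bsi^{\tt d}\|_{0,\Omega^-}$ nor by the semi-ellipticity of $\V$, together with the bookkeeping needed to guarantee that the $d^2$ term survives with a positive coefficient once $\delta$ is taken small. Everything else is a routine combination of the auxiliary lemmas of this section with Young's inequality, and --- crucially for the intended discrete application --- it does not require $\bta$ to be symmetric in the continuous kernel, only that $\bsi$ be divergence-free and symmetric, which is exactly the structure available at the discrete level once the discrete inf-sup condition for $\b_D$ is imposed.
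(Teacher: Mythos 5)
Your proposal is correct and follows essentially the paper's own route: a T-coercivity perturbation argument in which the extra test component $\delta\,c(\bsi)\,\widetilde\bps$ detects the constant (trace) part of $\bsi$ through the identity $\big(\tfrac12\,\I+\K^{\tt t}\big)\bnu=\bnu$ (a consequence of \eqref{kernels-V-and-W}), the cross terms are absorbed by boundedness plus Young's inequality, and the remaining diagonal part is controlled by the same G\r arding-type bound — Lemma \ref{cota-tau}, \eqref{W-eliptico}, and the nonnegativity of $\V$ — that the paper invokes as \eqref{eq:5.25}. The only differences are cosmetic: you re-derive that bound instead of citing the proof of Lemma \ref{weak-coercive-a-D}, and you track the constant correctly as $\boldsymbol\pi_{_I}\bsi=\tfrac13\,c(\bsi)\,\I$, where the paper writes $c(\bsi)\,\I$.
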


\begin{proof}
Because of \eqref{new4.24} we can assume that $\widetilde\bps\in \H^{1/2}_0(\Gamma)$ (just write $\widetilde\bps-\boldsymbol\pi_{_{RM}}
\widetilde\bps$ instead of $\widetilde\bps$).
Let us next notice that $\boldsymbol\pi_{_I}\bsi=c(\bsi)\I$ and therefore
\[
\a_D\big(\boldsymbol\pi_{_I}\bsi,\0),(\0,c(\bsi)\widetilde\bps)\big)=c(\bsi)^2 \langle\bnu,\widetilde\bps\rangle_\Gamma.
\]
This inequality, combined with the bound
\[
|\a_D\big((\bsi-\boldsymbol\pi_{_I}\bsi,\bva),(0,c(\bsi)\widetilde\bps)\big)|
\le
\frac12 c(\bsi)^2 \langle\bnu,\widetilde\bps\rangle_\Gamma
+ 
\frac12 \frac{\|\a_D\|^2\|\widetilde\bps\|_{1/2,\Gamma}^2}{ \langle\bnu,\widetilde\bps\rangle_\Gamma}
\| \bsi-\boldsymbol\pi_{_I}\bsi\|_{\X_D}^2
\]
prove
\begin{equation}\label{eq:5.24}
\a_D\big((\bsi,\bva),(0,c(\bsi)\widetilde\bps)\big)\ge 
\frac12 c(\bsi)^2 \langle\bnu,\widetilde\bps\rangle_\Gamma
- \frac12 \frac{\|\a_D\|^2\|\widetilde\bps\|_{1/2,\Gamma}^2}{ \langle\bnu,\widetilde\bps\rangle_\Gamma}
\| \bsi-\boldsymbol\pi_{_I}\bsi\|_{\X_D}^2.
\end{equation}
At the same time, we know (see the proof of Lemma  \ref{weak-coercive-a-D}) that there exists $\widetilde C>0$ such that
\begin{equation}\label{eq:5.25}
\a_D\big((\bsi,\bva),(\bsi,\bva)\big) \ge \widetilde C \| (\bsi-\boldsymbol\pi_{_I}\bsi,\bva)\|_{\X_D}^2 \qquad
\forall (\bsi,\bva)\in \widetilde\V_D.
\end{equation}
Taking
\[
\delta:=\frac{\widetilde C \langle\bnu,\widetilde\bps\rangle_\Gamma}{\|\a_D\|^2\| \widetilde\bps\|_{1/2,\Gamma}^2}
\]
and combining \eqref{eq:5.24}-\eqref{eq:5.25}, we easily prove that
\begin{equation}
\a_D\big((\bsi,\bva),(\bsi,\bva+\delta c(\bsi)\widetilde\bps)\big)
\ge 
\frac{\widetilde C \langle\bnu,\widetilde\bps\rangle_\Gamma^2}{2 \|\a_D\|^2 \|\widetilde\bps\|_{1/2,\Gamma}^2}
c(\bsi)^2 +\frac{\widetilde C}2 \|(\bsi-\boldsymbol\pi_{_I} \bsi,\bva)\|_{\X_D}^2 \,.
\end{equation}
This finishes the proof.
\end{proof}

\medskip
We note that the result of Lemma \ref{weak-coercive-a-D-direct} fits into what is called a T-coercivity result. In this case, the bounded isomorphism $T(\bsi,\bva):=(\bsi,\bva+c(\bsi) \delta\widetilde\bps)$ applied to the second component of the bilinear form makes it coercive. This gives an alternative proof of Lemma \ref{weak-coercive-a-D} providing additionally an estimate of the norm of the inverse of the operator $\mathbf A_D$, which depends exclusively on $\langle\bnu,\widetilde\bps\rangle_\Gamma$, $\|\widetilde\bps-\boldsymbol\pi_{_{RM}}\widetilde\bps\|_{1/2,\Gamma}$, the norm of $\a_D$, and the coercivity constant of \eqref{eq:5.25}.

\subsection{C \& H coupling with non-homogeneous Neumann boundary conditions on $\Gamma_0$}\label{section333}
Similarly as in the previous section, we now apply again the Costabel \& Han coupling method (see \cite{c1987}, \cite{h1990}) 
and incorporate the boundary integral equations \eqref{bie-1-with-bva} and \eqref{bie-2-with-bva} into the dual-mixed variational 
formulation in $\Omega^-$ given by \eqref{dual-mixed-formulation-Neumann}, which considers the 
non-homogeneous Neumann boundary condition 
$\,\gamma_\bnu^-(\bsi)\,=\,\mathbf g_{_N}\,\in\,\mathbf H^{-1/2}(\Gamma_0)$ \,on\, $\Gamma_0$.
In this way, our coupled variational formulation reads as follows: 
Find $((\bsi,\bva),(\u,\bch,\bla))\,\in\,{\X_N}\times{\Y_N}$ such that
\begin{equation}\label{dual-mixed-formulation-Neumann-complete}
\begin{array}{rcl}
\mathbf a_N((\bsi,\bva),(\bta,\bps))\,+\,\mathbf b_N((\bta,\bps),(\u,\bch,\bla)) &=& \F_N(\bta,\bps)
\qquad\forall\,(\bta,\bps)\in \X_N\,,\\[2ex]
\mathbf b_N((\bsi,\bva),(\bv,\bet,\bxi)) &=& \G_N(\bv,\bet,\bxi)
\qquad\forall\,(\bv,\bet,\bxi)\in \Y_N\,,
\end{array}
\end{equation}
where
\[
\mathbf X_N\,:=\,\mathbb H(\bdiv;\Omega^-)\times\mathbf H^{1/2}(\Gamma)\,,
\quad \mathbf Y_N\,:=\, \mathbf L^2(\Omega^-)\times \mathbb L^2_{\tt skew}(\Omega^-)
\times \mathbf H^{1/2}(\Gamma_0) \,,
\]
$\a_N : \X_N \times \X_N \to \R$ \,and\, $\b_N : \X_N \times \Y_N \to \R$ are the bounded bilinear forms
defined by
\begin{equation}\label{a-N-complete-N}
\begin{array}{c}
\disp
\mathbf a_N((\bsi,\bva),(\bta,\bps)) := \int_{\Omega^-} \bsi^{\tt d} : \bta^{\tt d}
\,+\,\langle \W\,\bva,\bps\rangle_\Gamma\,+\,\left\langle \left(\frac12\,\I\,+\,\K^{\tt t} \right)\,
\gamma^-_\bnu(\bsi),\bps \right\rangle_\Gamma \\[2ex]
\disp
+ \quad \langle \gamma^-_\bnu(\bta),\V \gamma^-_\bnu(\bsi)\rangle_\Gamma \,-\, \left\langle \gamma^-_\bnu(\bta) , \left(\frac12\,\I\,+\,\K \right)\bva \right\rangle_\Gamma  
\end{array}
\end{equation}
and
\begin{equation}\label{b-N-complete-N}
\mathbf b_N((\bta,\bps),(\bv,\bet,\bxi))\,:=\,
\int_{\Omega^-} \bv \cdot \bdiv\,\bta\,+\,\int_{\Omega^-} \bet : \bta \,+\, 
\langle \gamma^-_\bnu(\bta),\bxi\rangle_{\Gamma_0}\,,
\end{equation}
and $\F_N : \X_N \to \R$ and $\G_N : \Y_N \to \R$ are the bounded linear functionals given by
\begin{equation}\label{F-N-complete-N}
\F_N(\bta,\bps)\,:=\,0 \,,
\qquad
\mbox{and}
\qquad
\G_N(\bv,\bet,\bxi)\,:=\,-\,\int_{\Omega^-}\f\cdot\bv
\,+\, \langle \g_{_N},\bxi\rangle_{\Gamma_0} \,.
\end{equation}

\medskip
We now observe from \eqref{b-N-complete-N} that $\mathbf B_N : \X_N \to \Y_N$, 
the bounded linear operator induced by $\b_N$, is given by  
\[
\mathbf B_N((\bta,\bps))\,:=\,\big(
\bdiv\bta,\frac12(\bta - \bta^{\tt t}),\mathcal R_0\,\gamma_\bnu^-(\bta)\big) \qquad
\forall\,(\bta,\bps)\,\in\,\X_N\,,
\]
where $\,\mathcal R_0 : \mathbf H^{-1/2}(\Gamma_0) \to \mathbf H^{1/2}(\Gamma_0)$
is the respective Riesz operator. Then, we begin the analysis of solvability of
\eqref{dual-mixed-formulation-Neumann-complete} by proving next that $\b_N$ satisfies
the continuous inf-sup condition, which is equivalent to the surjectivity of $\mathbf B_N$.
\begin{lemma}\label{inf-sup-b-N-complete}
There exists $\beta > 0$ such that for any $(\bv,\bet,\bxi)\,\in\,{\Y_N}$ there holds
\begin{equation}\label{eq-inf-sup-b-N-complete}
\sup_{(\bta,\bps)\in {\X_N}\backslash\{\0\}}\,
\frac{\mathbf b_N((\bta,\bps),(\bv,\bet,\bxi))}{\|(\bta,\bps)\|_{\X_N}}\,\ge\,\beta\,\|(\bv,\bet,\bxi)\|_{\Y_N}\,.
\end{equation}
\end{lemma}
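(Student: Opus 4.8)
The plan is to prove the equivalent statement already announced before the lemma, namely that the bounded operator $\mathbf B_N:\X_N\to\Y_N$ induced by $\mathbf b_N$ is surjective, by adapting the construction used in the proof of Lemma \ref{inf-sup-b} so that it also reproduces the third (boundary) component of the datum. Given $(\bv,\bet,\bxi)\in\Y_N$, I would seek a preimage of the form $(\widehat\bta,\0)$, building $\widehat\bta$ from a single auxiliary elasticity-type problem whose data encode all three pieces $\bv$, $\bet$ and $\bxi$ at once.

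First I would use the Riesz isomorphism $\mathcal R_0:\mathbf H^{-1/2}(\Gamma_0)\to\mathbf H^{1/2}(\Gamma_0)$ to write $\bxi=\mathcal R_0\,\boldsymbol\zeta$ with $\boldsymbol\zeta\in\mathbf H^{-1/2}(\Gamma_0)$, so that $\langle\boldsymbol\zeta,\bxi\rangle_{\Gamma_0}=\|\bxi\|_{1/2,\Gamma_0}^2$ and $\|\boldsymbol\zeta\|_{-1/2,\Gamma_0}=\|\bxi\|_{1/2,\Gamma_0}$. Then, with $\mathbf H^1_\Gamma(\Omega^-)$ as introduced in the proof of Lemma \ref{inf-sup-b}, the second Korn inequality together with the Lax--Milgram lemma provide a unique $\z\in\mathbf H^1_\Gamma(\Omega^-)$ such that
\[
\int_{\Omega^-}\e(\z):\e(\w)\,=\,-\int_{\Omega^-}\bv\cdot\w\,-\,\int_{\Omega^-}\bet:\nabla\w\,+\,\langle\boldsymbol\zeta,\gamma^-(\w)\rangle_{\Gamma_0}\qquad\forall\,\w\in\mathbf H^1_\Gamma(\Omega^-),
\]
and satisfying $\|\z\|_{1,\Omega^-}\le C\big(\|\bv\|_{0,\Omega^-}+\|\bet\|_{0,\Omega^-}+\|\bxi\|_{1/2,\Gamma_0}\big)$. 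I would then set $\widehat\bta:=\e(\z)+\bet\in\mathbb L^2(\Omega^-)$.

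The verification proceeds in three steps. Restricting the above identity to test functions vanishing on all of $\partial\Omega^-$ and integrating by parts gives $\bdiv\,\widehat\bta=\bv$ in $\Omega^-$, whence $\widehat\bta\in\mathbb H(\bdiv;\Omega^-)$ with $\|\bdiv\,\widehat\bta\|_{0,\Omega^-}=\|\bv\|_{0,\Omega^-}$; since $\e(\z)$ is symmetric and $\bet$ skew-symmetric, $\frac12(\widehat\bta-\widehat\bta^{\tt t})=\bet$; and testing with a general $\w\in\mathbf H^1_\Gamma(\Omega^-)$, integrating by parts, using $\bdiv\,\widehat\bta=\bv$, $\gamma^-(\w)=\0$ on $\Gamma$, and the surjectivity of the trace onto $\mathbf H^{1/2}(\Gamma_0)$, one obtains $\gamma^-_\bnu(\widehat\bta)=\boldsymbol\zeta$ on $\Gamma_0$. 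Consequently $\mathbf B_N((\widehat\bta,\0))=(\bv,\bet,\bxi)$, the a priori estimate yields $\|(\widehat\bta,\0)\|_{\X_N}\le C\,\|(\bv,\bet,\bxi)\|_{\Y_N}$, and, because $\int_{\Omega^-}\bet:\widehat\bta=\int_{\Omega^-}\bet:\bet$, we get $\mathbf b_N((\widehat\bta,\0),(\bv,\bet,\bxi))=\|\bv\|^2_{0,\Omega^-}+\|\bet\|^2_{0,\Omega^-}+\|\bxi\|^2_{1/2,\Gamma_0}=\|(\bv,\bet,\bxi)\|_{\Y_N}^2$. Using $(\widehat\bta,\0)$ as a test pair in the supremum of \eqref{eq-inf-sup-b-N-complete} then proves the claim with $\beta:=1/C$.

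The only genuinely delicate point I anticipate is the last check: the integration-by-parts identity that isolates $\gamma^-_\bnu(\widehat\bta)$ on $\Gamma_0$ requires careful bookkeeping of the duality pairing over the two connected components of $\partial\Omega^-=\Gamma_0\cup\Gamma$, exploiting that the trace of the test function is unconstrained on $\Gamma_0$ but vanishes on $\Gamma$ — precisely the mechanism already used in Lemma \ref{inf-sup-b}. Everything else is the standard Korn/Lax--Milgram machinery together with the Riesz identification of the $\bxi$-datum.
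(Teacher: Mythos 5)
Your proposal is correct and follows essentially the same route as the paper: the paper also solves the auxiliary elasticity problem in $\mathbf H^1_\Gamma(\Omega^-)$ with the extra boundary term $\langle \mathcal R_0^{-1}(\bxi),\gamma^-(\w)\rangle_{\Gamma_0}$ (your $\boldsymbol\zeta$), sets $\widehat\bta:=\e(\z)+\bet$, and verifies $\mathbf B_N((\widehat\bta,\0))=(\bv,\bet,\bxi)$, exactly mirroring Lemma \ref{inf-sup-b}. Your additional explicit evaluation of $\mathbf b_N((\widehat\bta,\0),(\bv,\bet,\bxi))$ and the Riesz norm identities are just a more detailed write-up of the same argument.
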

\begin{proof}
We proceed as in the proof of Lemma \ref{inf-sup-b}. In fact, given
$(\bv,\bet,\bxi)\,\in\,{\Y_N}$, we let $\z$ be the unique element in $\,\mathbf H^1_\Gamma(\Omega^-)
\,:=\,\Big\{\,\mathbf w\in \mathbf H^1(\Omega^-): \quad \mathbf w\,=\,\0 \qon \Gamma\,\Big\}\,$,
whose existence is guaranteed by the second Korn inequality, such that
\[
\int_{\Omega^-} \e(\z) : \e(\w)\,=\,-\int_{\Omega^-} \bv \cdot \w
\,- \int_{\Omega^-} \bet : \nabla \w \,+\,\langle \mathcal R^{-1}_0(\bxi),\gamma^-(\w)\rangle_{\Gamma_0}
\qquad\forall\,\w\,\in\,\mathbf H^1_\Gamma(\Omega^-)\,.
\]
Hence, defining $\,\,\widehat\bta\,:=\,\e(\z) + \bet\,\in\,\mathbb L^2(\Omega^-)$,
we deduce from the above formulation that $\,\bdiv\,\widehat\bta\,=\,\bv$ \,in\, $\Omega^-$, which shows that
$\,\widehat\bta\,\in\,\mathbb H(\bdiv;\Omega^-)$, and then that $\gamma^-_\bnu(\widehat\bta)
\,=\,\mathcal R^{-1}_0(\bxi)$ \,on\, $\Gamma_0$. In this way, it is easy to see that 
$\,\mathbf B_N((\widehat\bta,\0))\,=\,(\bv,\bet,\bxi)$, which ends the proof.

\end{proof}

\medskip
In what follows we let $\V_N$ be the kernel of $\mathbf B_N$, that is
\[
\V_N\,:=\,\Big\{ \,(\bta,\bps)\in \X_N: \quad \bta = \bta^{\tt t} \qan \bdiv \bta \,=\,\0\qin
\Omega^-\,, \qan \gamma^-_\bnu(\bta)\,=\,\0 \qon \Gamma_0\,\Big\}\,,
\]
and establish a positiveness property of $\a_N$ on $\V_N$.
\begin{lemma}\label{quasi-coerciveness-a-N}
There exists $\widetilde\alpha > 0$ such that
\begin{equation}\label{eq-quasi-coerciveness-a-N}
\a_N((\bta,\bps),(\bta,\bps))\,\ge\,\widetilde\alpha\,\Big\{\,
\|\bta\|^2_{\bdiv;\Omega^-}\,+\,\|\bps-\boldsymbol\pi_{_{RM}}\bps\|^2_{1/2,\Gamma}\,\Big\}
\qquad\forall\,(\bta,\bps)\,\in\,\V_N.,
\end{equation}
\end{lemma}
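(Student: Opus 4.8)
The plan is to test $\a_N$ on the diagonal and to exploit a cancellation that reduces $\a_N((\bta,\bps),(\bta,\bps))$ to a sum of three nonnegative contributions, exactly as in \eqref{eq-a-D-0} in the proof of Lemma \ref{kernel-Dirichlet} (note that the expression \eqref{a-N-complete-N} defining $\a_N$ here coincides with the one for $\a_D$). Concretely, setting $(\bsi,\bva)=(\bta,\bps)$ in \eqref{a-N-complete-N} and using that $\K^{\tt t}$ is the adjoint of $\K$, so that $\big\langle(\tfrac12\I+\K^{\tt t})\gamma^-_\bnu(\bta),\bps\big\rangle_\Gamma=\big\langle\gamma^-_\bnu(\bta),(\tfrac12\I+\K)\bps\big\rangle_\Gamma$, the two coupling terms cancel and one is left with
\[
\a_N((\bta,\bps),(\bta,\bps))\,=\,\|\bta^{\tt d}\|^2_{0,\Omega^-}\,+\,\langle\W\,\bps,\bps\rangle_\Gamma\,+\,\langle\gamma^-_\bnu(\bta),\V\,\gamma^-_\bnu(\bta)\rangle_\Gamma\,.
\]
Since $\V$ is positive semidefinite (cf. \eqref{V-eliptico-equiv}), the last term may simply be discarded, leaving $\a_N((\bta,\bps),(\bta,\bps))\ge\|\bta^{\tt d}\|^2_{0,\Omega^-}+\langle\W\,\bps,\bps\rangle_\Gamma$.

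Next I would bound each of the two surviving terms from below. For the boundary term, \eqref{W-eliptico-equiv} directly gives $\langle\W\,\bps,\bps\rangle_\Gamma\ge\tilde\alpha_2\,\|\bps-\boldsymbol\pi_{_{RM}}\bps\|^2_{1/2,\Gamma}$. For the volume term I would use that membership of $(\bta,\bps)$ in $\V_N$ forces $\bdiv\bta=\0$ in $\Omega^-$ and $\gamma^-_\bnu(\bta)=\0$ on $\Gamma_0$, hence $\bta\in\mathbb H_0(\bdiv;\Omega^-)$. Then Lemma \ref{cota-tau} together with $\bdiv\bta=\0$ yields $\|\bta^{\tt d}\|^2_{0,\Omega^-}\ge c_1\|\bta-\boldsymbol\pi_{_I}\bta\|^2_{0,\Omega^-}$; since $\boldsymbol\pi_{_I}\bta$ is a constant multiple of $\I$ and hence divergence-free, $\|\bta-\boldsymbol\pi_{_I}\bta\|^2_{0,\Omega^-}=\|\bta-\boldsymbol\pi_{_I}\bta\|^2_{\bdiv;\Omega^-}$, and Lemma \ref{otra-cota-tau} then gives $\|\bta-\boldsymbol\pi_{_I}\bta\|^2_{\bdiv;\Omega^-}\ge c_2\,\|\bta\|^2_{\bdiv;\Omega^-}$. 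Chaining these estimates produces $\|\bta^{\tt d}\|^2_{0,\Omega^-}\ge c_1c_2\,\|\bta\|^2_{\bdiv;\Omega^-}$.

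Putting the two bounds together and choosing $\widetilde\alpha:=\min\{c_1c_2,\tilde\alpha_2\}$ gives \eqref{eq-quasi-coerciveness-a-N}. There is no genuine obstacle here: the argument is essentially the same chain of inequalities already used in the well-posedness theorem for the J \& N coupling, the only new ingredient being the cancellation of the $\K/\K^{\tt t}$ terms. The step requiring the most care is simply to recognize that the constraint $\gamma^-_\bnu(\bta)=\0$ on $\Gamma_0$ built into $\V_N$ is precisely what places $\bta$ in $\mathbb H_0(\bdiv;\Omega^-)$ and thus makes Lemma \ref{otra-cota-tau} applicable.
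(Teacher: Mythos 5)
Your proposal is correct and follows essentially the same route as the paper's proof: the $\K/\K^{\tt t}$ cancellation on the diagonal, discarding the nonnegative $\V$-term, the ellipticity \eqref{W-eliptico-equiv} for the boundary part, and the chain of Lemmas \ref{cota-tau} and \ref{otra-cota-tau} (applicable because $\gamma^-_\bnu(\bta)=\0$ on $\Gamma_0$ places $\bta$ in $\mathbb H_0(\bdiv;\Omega^-)$) for the volume part. The only cosmetic difference is that the paper keeps the $\V$-term one step longer via \eqref{V-eliptico-equiv} before dropping it.
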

\begin{proof}
Let $(\bta,\bps)\in \V_N$. Then, recalling that $\K^{\tt t}$ is the adjoint of $\K$, noting that 
$\bta\in\mathbb H_0(\bdiv;\Omega^-)$, and then applying the inequalities \eqref{eq-cota-tau} (cf. Lemma 
\ref{cota-tau}), \eqref{eq-otra-cota-tau} (cf. Lemma \ref{otra-cota-tau}),
\eqref{W-eliptico-equiv}, and \eqref{V-eliptico-equiv}, we find that
\begin{equation}\label{eq-a-N-0}
\begin{array}{l}
\disp
\mathbf a_N\big((\bta,\bps),(\bta,\bps)\big)\,=\,
\|\bta^{\tt d}\|^2_{0,\Omega^-}\,+\,\langle \W\,\bps,\bps\rangle_\Gamma\,+\,
\langle\gamma^-_\bnu(\bta),\V \gamma^-_\bnu(\bta)\rangle_\Gamma\\[2ex]
\disp\qquad
\ge \quad c_1\|\bta-\boldsymbol\pi_{_I}\bta\|^2_{\bdiv;\Omega^-}\,+\,\tilde\alpha_2\,\|\bps-\boldsymbol\pi_{_{RM}}\bps\|^2_{1/2,\Gamma}\,+\,
\tilde\alpha_1\,\|\gamma^-_\bnu(\bta)-\boldsymbol\pi_{_\nu}\gamma^-_\bnu(\bta)\|^2_{-1/2,\Gamma}\\[2ex]
\disp\qquad
\ge \quad c_1\,c_2\,\|\bta\|^2_{\bdiv;\Omega^-}\,+\,\tilde\alpha_2\,\|\bps-\boldsymbol\pi_{_{RM}}\|^2_{1/2,\Gamma}\,,
\end{array}
\end{equation}
which yields the required inequality \eqref{eq-quasi-coerciveness-a-N}.

\end{proof}

\begin{lemma}\label{kernel-Neumann}
The set of solutions of the homogeneous version of \eqref{dual-mixed-formulation-Neumann-complete}
is given by
\[
\Big\{\,\big((\bsi,\bva),(\u,\bch,\bla)\big)\,:=\,
\big((\0,\z),(\0,\0,\0)\big): \qquad \z\,\in\,\mathbf{RM}(\Gamma)\,\Big\}\,.
\]
\end{lemma}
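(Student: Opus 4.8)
The argument will follow exactly the pattern of the proofs of Lemmas \ref{kernel} and \ref{kernel-Dirichlet}, now built on the positiveness estimate of Lemma \ref{quasi-coerciveness-a-N} and the inf-sup condition of Lemma \ref{inf-sup-b-N-complete}. Let $\big((\bsi,\bva),(\u,\bch,\bla)\big)\in\X_N\times\Y_N$ solve the homogeneous version of \eqref{dual-mixed-formulation-Neumann-complete}, i.e.\ with $\f=\0$ and $\g_{_N}=\0$. The first step is to test the second equation with arbitrary $\bv\in\mathbf L^2(\Omega^-)$, with arbitrary skew-symmetric $\bet$, and with arbitrary $\bxi\in\mathbf H^{1/2}(\Gamma_0)$; recalling the definition \eqref{b-N-complete-N} of $\b_N$ this yields $\bdiv\bsi=\0$ and $\bsi=\bsi^{\tt t}$ in $\Omega^-$ and $\gamma^-_\bnu(\bsi)=\0$ on $\Gamma_0$, that is, $(\bsi,\bva)\in\V_N$. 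In particular $\b_N((\bsi,\bva),(\u,\bch,\bla))=0$, so taking $(\bta,\bps)=(\bsi,\bva)$ in the first equation gives $\a_N((\bsi,\bva),(\bsi,\bva))=0$, and Lemma \ref{quasi-coerciveness-a-N} then forces $\|\bsi\|_{\bdiv;\Omega^-}=0$ and $\|\bva-\boldsymbol\pi_{_{RM}}\bva\|_{1/2,\Gamma}=0$, hence $\bsi=\0$ and $\bva=\z$ for some $\z\in\mathbf{RM}(\Gamma)$.

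Next I would take $\bps=\0$ in the first equation. Substituting $\bsi=\0$ and $\bva=\z$ into \eqref{a-N-complete-N}, every term except the last one vanishes identically, and since $\z\in\mathbf{RM}(\Gamma)=\ker\big(\tfrac12\I+\K\big)$ by \eqref{kernels-V-and-W}, the remaining term $-\big\langle\gamma^-_\bnu(\bta),(\tfrac12\I+\K)\z\big\rangle_\Gamma$ also vanishes. Thus $\a_N((\bsi,\bva),(\bta,\0))=0$ for every $\bta\in\mathbb H(\bdiv;\Omega^-)$, and the first equation reduces to $\b_N((\bta,\0),(\u,\bch,\bla))=0$ for all such $\bta$. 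Because $\b_N$ is independent of its $\bps$-argument, this suffices to apply the inf-sup condition of Lemma \ref{inf-sup-b-N-complete}, which then gives $\|(\u,\bch,\bla)\|_{\Y_N}=0$, i.e.\ $(\u,\bch,\bla)=(\0,\0,\0)$.

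For the converse inclusion it remains to check directly that, for any $\z\in\mathbf{RM}(\Gamma)$, the element $\big((\0,\z),(\0,\0,\0)\big)$ solves the homogeneous problem: the second equation holds trivially since $\bsi=\0$ and $\gamma^-_\bnu(\0)=\0$, while in the first equation $\b_N((\bta,\bps),(\0,\0,\0))=0$ and, using once more that $\z\in\mathbf{RM}(\Gamma)=\ker(\W)=\ker\big(\tfrac12\I+\K\big)$, one has $\a_N((\0,\z),(\bta,\bps))=\langle\W\z,\bps\rangle_\Gamma-\big\langle\gamma^-_\bnu(\bta),(\tfrac12\I+\K)\z\big\rangle_\Gamma=0$. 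This establishes the claimed characterization. The only steps requiring some care are the bookkeeping of the several boundary terms in $\a_N$ — all of which collapse once $\bsi=\0$ and $\bva\in\mathbf{RM}(\Gamma)$, thanks to the kernel identities \eqref{kernels-V-and-W} — and the remark that the inf-sup test for $\b_N$ may be carried out with $\bps=\0$; I do not expect any genuine analytical obstacle beyond invoking Lemmas \ref{quasi-coerciveness-a-N} and \ref{inf-sup-b-N-complete}.
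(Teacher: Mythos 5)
Your proof is correct and follows essentially the same route as the paper: deduce $(\bsi,\bva)\in\V_N$ from the second equation, apply Lemma \ref{quasi-coerciveness-a-N} with $(\bta,\bps)=(\bsi,\bva)$ to get $\bsi=\0$ and $\bva\in\mathbf{RM}(\Gamma)$, invoke the inf-sup condition of Lemma \ref{inf-sup-b-N-complete} to obtain $(\u,\bch,\bla)=(\0,\0,\0)$, and verify the converse via the kernel identities \eqref{kernels-V-and-W}. No issues.
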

\begin{proof}
It follows similarly to the proof of Lemma \ref{kernel-Dirichlet}. Indeed, 
let $\big((\bsi,\bva),(\u,\bch,\bla)\big)\,\in\,{\X_N} \times {\Y_N}$ be a solution of 
\eqref{dual-mixed-formulation-Dirichlet-complete} with $\g_{_N} = \0$ on $\Gamma_0$
and $\f\,=\,\0$ in $\Omega^-$. It is clear from the second equation that $(\bsi,\bva)\,\in\,\mathbf V_N$.
Then, taking in particular $(\bta,\bps)\,=\,(\bsi,\bva)$ in the first equation,
and using the inequality \eqref{eq-quasi-coerciveness-a-N} (cf. Lemma \ref{quasi-coerciveness-a-N}), 
we find that 
\begin{equation}\label{eq-a-N-0-otra}
0\,=\,\mathbf a_N\big((\bsi,\bva),(\bsi,\bva)\big)\,\ge\,
\widetilde\alpha\,\Big\{\,\|\bsi\|^2_{\bdiv;\Omega^-}\,+\,\|\bva-\boldsymbol\pi_{_{RM}}\bva\|^2_{1/2,\Gamma}\,\Big\}\,,
\end{equation}
from where it follows that $\,\bsi\,=\,\0$ \,in\, $\Omega^-$ 
\,and\,  $\bva\,=\,\mathbf z$ for $\z\in \mathbf{RM}(\Gamma)$.
As a consequence, and using the characterization of the kernel $\W$ 
given by \eqref{kernels-V-and-W}, we find that the first equation of the homogeneous 
\eqref{dual-mixed-formulation-Neumann-complete} becomes 
\[
\b_N((\bta,\bps),(\u,\bch,\bla))\,=\,0\qquad\forall\,(\bta,\bps)\,\in\,\X_N\,,
\]
which, thanks to the inf-sup condition \eqref{eq-inf-sup-b-N-complete} (cf. Lemma \ref{inf-sup-b-N-complete}), yields
$(\u,\bch,\bla)\,=\,(\0,\0,\0)$.
Conversely, it is not difficult to see, using again the characterization of $\mathrm{ker}\,(\W)$ 
(cf. \eqref{kernels-V-and-W}), that for any $\z\in \mathbf{RM}(\Gamma)$, 
$\,\big((\bsi,\bva),(\u,\bch,\bla)\big)\,:=\,\big((\0,\z),(\0,\0,\0)\big)$ solves the 
homogeneous version of \eqref{dual-mixed-formulation-Neumann-complete}.

\end{proof}

\medskip
Therefore, similarly as for the analysis in Sections \ref{section331} and \ref{section332},
and in order to guarantee the unique solvability of the coupled problem 
\eqref{dual-mixed-formulation-Neumann-complete}, we now look for the solution
$((\bsi,\bva),(\u,\bch,\bla))$ in the space $\widetilde\X_N \times \Y_N$, where
\begin{equation}\label{tilde-X-N}
\widetilde{\X}_N\,:=\,\mathbb H(\bdiv;\Omega^-)\times\mathbf H^{1/2}_0(\Gamma)\,,
\end{equation}
which yields the kernel of the operator defined by $\b_N : \widetilde\X_N \times \Y_N \to \R$ to become
\[
\widetilde\V_N\,:=\,\Big\{ \,(\bta,\bps)\in \widetilde\X_N: \quad \bta = \bta^{\tt t} \qan \bdiv \bta \,=\,\0\qin
\Omega^-\,, \qan \gamma^-_\bnu(\bta)\,=\,\0 \qon \Gamma_0\,\Big\}\,.
\]
In addition, applying again the characterization of the kernels (cf. \eqref{kernels-V-and-W}) and the 
symmetry-type property of $\W$, we find that
it suffices to require the first equation of \eqref{dual-mixed-formulation-Neumann-complete} 
for any $(\bta,\bps) \in{\widetilde{\X}_N}$. As a consequence, instead of 
\eqref{dual-mixed-formulation-Neumann-complete}, we now look for 
$((\bsi,\bva),(\u,\bch,\bla))\,\in\,{\widetilde\X_N}\times{\Y_N}$ such that
\begin{equation}\label{dual-mixed-formulation-Neumann-complete-new}
\begin{array}{rcl}
\mathbf a_N((\bsi,\bva),(\bta,\bps))\,+\,\mathbf b_N((\bta,\bps),(\u,\bch,\bla)) &=& \F_N(\bta,\bps)
\qquad\forall\,(\bta,\bps)\in \widetilde\X_N\,,\\[2ex]
\mathbf b_N((\bsi,\bva),(\bv,\bet,\bxi)) &=& \G_N(\bv,\bet,\bxi)
\qquad\forall\,(\bv,\bet,\bxi)\in \Y_N\,,
\end{array}
\end{equation}

\medskip
Next, it follows from Lemma \ref{quasi-coerciveness-a-N} and the above 
characterization of $\widetilde\V_N$ that the bilinear form $\mathbf a_N$ is strongly coercive on
$\widetilde\V_N$. In addition, since actually $\mathbf b_N$ does not depend on the component
$\bps\in \mathbf H^{1/2}(\Gamma)$, it is quite clear from Lemma \ref{inf-sup-b-N-complete} 
that $\mathbf b_N$ satisfies the continuous inf-sup condition on $\widetilde \X_N \times \Y_N$ as well.
Hence, the well-posedness of \eqref{dual-mixed-formulation-Neumann-complete-new} is readily
established as follows.
\begin{theorem}\label{CH-nhNbc}
Given $\mathbf g_{_N}\,\in\,\mathbf H^{-1/2}(\Gamma_0)$ and $\mathbf f\,\in\,\mathbf L^2(\Omega^-)$, there
exists a unique $((\bsi,\bva),(\u,\bch,\bla))\,\in\,{\widetilde\X_N}\times{\Y_N}$ solution to
\eqref{dual-mixed-formulation-Neumann-complete-new}. In addition, there exists $C > 0$ such that
\[
\|((\bsi,\bva),(\u,\bch,\bla))\|_{\X_N \times \Y_N}\,\le\,C\,
\Big\{\,\|\mathbf g_{_N}\|_{-1/2,\Gamma_0}\,+\,\|\mathbf f\|_{0,\Omega^-}\,\Big\}\,.
\]
\end{theorem}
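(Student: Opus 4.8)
The plan is to check the three hypotheses of the classical Babu\v ska--Brezzi theory for the reduced problem \eqref{dual-mixed-formulation-Neumann-complete-new}, posed on $\widetilde\X_N\times\Y_N$ with $\widetilde\X_N$ given by \eqref{tilde-X-N}, and then to invoke \cite[Theorem 1.1, Chapter II]{bf-1991}. First I would dispose of boundedness: this was already observed at the start of the subsection, since every term in \eqref{a-N-complete-N} and \eqref{b-N-complete-N} is continuous on the relevant product space by Lemma \ref{properties-2} (for $\V$, $\K$, $\K^{\tt t}$ and $\W$) together with the continuity of $\gamma^-_\bnu$. Next, for the inf--sup condition for $\b_N:\widetilde\X_N\times\Y_N\to\R$, I would simply note that Lemma \ref{inf-sup-b-N-complete} already establishes \eqref{eq-inf-sup-b-N-complete} on the larger space $\X_N$, and that the element $(\widehat\bta,\0)$ built in that proof lies in $\widetilde\X_N$ as well; since $\b_N$ does not depend on its $\bps$-argument, the same bound passes verbatim to the supremum over $\widetilde\X_N\setminus\{\0\}$.

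The only step that still calls for a short argument is the ellipticity of $\a_N$ on the kernel of $\b_N$. By the kernel characterizations \eqref{kernels-V-and-W} and the self-adjointness of $\W$, this kernel is
\[
\widetilde\V_N=\Big\{(\bta,\bps)\in\widetilde\X_N:\ \bta=\bta^{\tt t},\ \bdiv\bta=\0\ \hbox{in}\ \Omega^-,\ \gamma^-_\bnu(\bta)=\0\ \hbox{on}\ \Gamma_0\Big\}.
\]
For $(\bta,\bps)\in\widetilde\V_N$ one has $\bps\in\mathbf H^{1/2}_0(\Gamma)$, hence $\boldsymbol\pi_{_{RM}}\bps=\0$, so Lemma \ref{quasi-coerciveness-a-N} directly gives
\[
\a_N((\bta,\bps),(\bta,\bps))\ \ge\ \widetilde\alpha\,\big\{\|\bta\|^2_{\bdiv;\Omega^-}+\|\bps\|^2_{1/2,\Gamma}\big\}\ =\ \widetilde\alpha\,\|(\bta,\bps)\|^2_{\widetilde\X_N}.
\]
Thus $\a_N$ is \emph{strongly} coercive on the whole kernel $\widetilde\V_N$, unlike $\a_D$ in Lemma \ref{weak-coercive-a-D}, which was only weakly coercive; this gain is exactly what is bought by imposing $\gamma^-_\bnu(\bsi)=\g_{_N}$ weakly via the Lagrange multiplier $\bla$.

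With these three properties verified, the Babu\v ska--Brezzi theorem yields a unique $((\bsi,\bva),(\u,\bch,\bla))\in\widetilde\X_N\times\Y_N$ solving \eqref{dual-mixed-formulation-Neumann-complete-new}, together with a stability estimate of the form $\|((\bsi,\bva),(\u,\bch,\bla))\|_{\widetilde\X_N\times\Y_N}\,\le\,C\,\big(\|\F_N\|+\|\G_N\|\big)$ in the respective dual norms; since $\F_N=0$ and, from \eqref{F-N-complete-N}, $\|\G_N\|\le C\,\big(\|\f\|_{0,\Omega^-}+\|\g_{_N}\|_{-1/2,\Gamma_0}\big)$, the asserted bound follows, with the $\X_N\times\Y_N$-norm agreeing with the $\widetilde\X_N\times\Y_N$-norm on the solution because $\bva\in\mathbf H^{1/2}_0(\Gamma)$. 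I do not expect a genuine obstacle here: the whole proof is bookkeeping, and the single point to watch is that restricting from $\X_N$ to the reduced space $\widetilde\X_N$ damages neither the inf--sup condition (as $\b_N$ ignores $\bps$) nor the kernel description used for coercivity (by \eqref{kernels-V-and-W}).
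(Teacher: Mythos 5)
Your proposal is correct and follows essentially the same route as the paper: strong coercivity of $\a_N$ on the kernel $\widetilde\V_N$ obtained from Lemma \ref{quasi-coerciveness-a-N} (with $\boldsymbol\pi_{_{RM}}\bps=\0$ for $\bps\in\mathbf H^{1/2}_0(\Gamma)$), the inf--sup condition on $\widetilde\X_N\times\Y_N$ inherited from Lemma \ref{inf-sup-b-N-complete} because $\b_N$ ignores the $\bps$-component, and then the Babu\v ska--Brezzi theorem \cite[Theorem 1.1, Chapter II]{bf-1991}. The bookkeeping for the right-hand sides and norms is also exactly what the paper does.
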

\begin{proof}
According to the previous discussion, the proof follows by applying 
once again the usual result from the Babu\v ska-Brezzi theory 
(see, e.g. \cite[Theorem 1.1, Chapter II]{bf-1991}).

\end{proof}
\subsection{C \& H coupling with homogeneous Neumann boundary conditions on $\Gamma_0$}\label{section334}
In what follows we proceed similarly as in the previous section and apply the Costabel \& Han coupling method 
to the case of homogeneous Neumann boundary conditions on $\Gamma_0$. This means that we now
incorporate the boundary integral equations \eqref{bie-1-with-bva} and \eqref{bie-2-with-bva} 
into the dual-mixed variational formulation in $\Omega^-$ given by \eqref{dual-mixed-formulation-Neumann-0}. 
In this way, as in Sections \ref{section313} and \ref{section331}, there is no need of introducing 
the additional unknown $\lambda \in \mathbf H^{1/2}(\Gamma_0)$, 
and hence our coupled variational formulation simply reads as follows: 
Find $((\bsi,\bva),(\u,\bch))\,\in\,{\X_N}\times{\Y_N}$ such that
\begin{equation}\label{dual-mixed-formulation-Neumann-complete-0}
\begin{array}{rcl}
\mathbf a_N((\bsi,\bva),(\bta,\bps))\,+\,\mathbf b_N((\bta,\bps),(\u,\bch)) &=& \F_N(\bta,\bps)
\qquad\forall\,(\bta,\bps)\in \X_N\,,\\[2ex]
\mathbf b_N((\bsi,\bva),(\bv,\bet)) &=& \G_N(\bv,\bet)
\qquad\forall\,(\bv,\bet)\in \Y_N\,,
\end{array}
\end{equation}
where
\[
\mathbf X_N\,:=\,\mathbb H_0(\bdiv;\Omega^-)\times\mathbf H^{1/2}(\Gamma)\,,
\quad \mathbf Y_N\,:=\, \mathbf L^2(\Omega^-)\times \mathbb L^2_{\tt skew}(\Omega^-) \,,
\]
$\a_N : \X_N \times \X_N \to \R$ \,and\, $\b_N : \X_N \times \Y_N \to \R$ are the bounded bilinear forms
defined by
\begin{equation}\label{a-N-complete-N-0}
\begin{array}{c}
\disp
\mathbf a_N((\bsi,\bva),(\bta,\bps)) := \int_{\Omega^-} \bsi^{\tt d} : \bta^{\tt d}
\,+\,\langle \W\,\bva,\bps\rangle_\Gamma\,+\,\left\langle \left(\frac12\,\I\,+\,\K^{\tt t} \right)\,
\gamma^-_\bnu(\bsi),\bps \right\rangle_\Gamma \\[2ex]
\disp
+ \quad \langle \gamma^-_\bnu(\bta),\V \gamma^-_\bnu(\bsi)\rangle_\Gamma \,-\, \left\langle \gamma^-_\bnu(\bta) , \left(\frac12\,\I\,+\,\K \right)\bva \right\rangle_\Gamma  
\end{array}
\end{equation}
and
\begin{equation}\label{b-N-complete-N-0}
\mathbf b_N((\bta,\bps),(\bv,\bet))\,:=\,
\int_{\Omega^-} \bv \cdot \bdiv\,\bta\,+\,\int_{\Omega^-} \bet : \bta \,,
\end{equation}
and $\F_N : \X_N \to \R$ and $\G_N : \Y_N \to \R$ are the bounded linear functionals given by
\begin{equation}\label{F-N-complete-N-0}
\F_N(\bta,\bps)\,:=\,0 \,,
\qquad
\mbox{and}
\qquad
\G_N(\bv,\bet,\bxi)\,:=\,-\,\int_{\Omega^-}\f\cdot\bv \,.
\end{equation}

\medskip
Concerning the solvability analysis of \eqref{dual-mixed-formulation-Neumann-complete-0}, 
we first observe that the continuous inf-sup condition for $\mathbf b_N$ was already
proved by Lemma \ref{inf-sup-b}. In addition, it is clear that
the kernel of $\mathbf b_N$ is given by
\[
\V_N\,:=\,\Big\{ \,(\bta,\bps)\in \X_N: \quad \bta = \bta^{\tt t} \qan \bdiv \bta \,=\,\0\qin
\Omega^-\,\Big\}\,,
\]
and that $\mathbf a_N$ satisfies the same positiveness property from Lemma \ref{quasi-coerciveness-a-N},
that is
\begin{equation}\label{quasi-coerciveness-a-N-new}
\a_N((\bta,\bps),(\bta,\bps))\,\ge\,\widetilde\alpha\,\Big\{\,
\|\bta\|^2_{\bdiv;\Omega^-}\,+\,\|\bps-\boldsymbol\pi_{_{RM}}\bps\|^2_{1/2,\Gamma}\,\Big\}
\qquad\forall\,(\bta,\bps)\,\in\,\V_N\,.
\end{equation}
Moreover, basically the same proof of Lemma \ref{kernel-Neumann} shows that the
set of solutions of the homogeneous version of \eqref{dual-mixed-formulation-Neumann-complete-0}
is given by
\[
\Big\{\,\big((\bsi,\bva),(\u,\bch)\big)\,:=\,
\big((\0,\z),(\0,\0)\big): \qquad \z\,\in\,\mathbf{RM}(\Gamma)\,\Big\}\,.
\]
Consequently, following a similar analysis to the one from the previous section,
and in order to guarantee unique solvability of the resulting problem,
\eqref{dual-mixed-formulation-Neumann-complete-0} is reformulated as: 
Find $((\bsi,\bva),(\u,\bch))\,\in\,{\widetilde\X_N}\times{\Y_N}$ such that
\begin{equation}\label{dual-mixed-formulation-Neumann-complete-0-new}
\begin{array}{rcl}
\mathbf a_N((\bsi,\bva),(\bta,\bps))\,+\,\mathbf b_N((\bta,\bps),(\u,\bch)) &=& \F_N(\bta,\bps)
\qquad\forall\,(\bta,\bps)\in \widetilde\X_N\,,\\[2ex]
\mathbf b_N((\bsi,\bva),(\bv,\bet)) &=& \G_N(\bv,\bet)
\qquad\forall\,(\bv,\bet)\in \Y_N\,,
\end{array}
\end{equation}
where 
\[
\widetilde{\X}_N\,:=\,\mathbb H_0(\bdiv;\Omega^-)\times\mathbf H^{1/2}_0(\Gamma)\,.
\]
Then, the kernel of $\mathbf b_N : \widetilde\X_N \times \Y_N \to \R$ becomes now
\[
\widetilde \V_N\,:=\,\Big\{ \,(\bta,\bps)\in \widetilde
\X_N: \quad \bta = \bta^{\tt t} \qan \bdiv \bta \,=\,\0\qin
\Omega^-\,\Big\}\,,
\]
whence \eqref{quasi-coerciveness-a-N-new} yields the strong coerciveness of $\mathbf a_N$ on 
$\widetilde \V_N$. In turn, since $\mathbf b_N$ does not depend on the component
$\bps\in \mathbf H^{1/2}_0(\Gamma)$, it is also clear from Lemma \ref{inf-sup-b} 
that $\mathbf b_N$ satisfies the continuous inf-sup condition on $\widetilde \X_N \times \Y_N$ as well.
These remarks and \cite[Theorem 1.1, Chapter II]{bf-1991}
imply the well-posedness of \eqref{dual-mixed-formulation-Neumann-complete-0-new}.
%
%
\begin{theorem}\label{CH-hNbc}
Given $\mathbf f\,\in\,\mathbf L^2(\Omega^-)$, there
exists a unique $((\bsi,\bva),(\u,\bch))\,\in\,{\widetilde\X_N}\times{\Y_N}$ solution to
\eqref{dual-mixed-formulation-Neumann-complete-0-new}. In addition, there exists $C > 0$ such that
\[
\|((\bsi,\bva),(\u,\bch))\|_{\X_N \times \Y_N}\,\le\,C\,\|\mathbf f\|_{0,\Omega^-}\,.
\]
\end{theorem}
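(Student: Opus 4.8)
The plan is to apply the classical Babu\v ska--Brezzi theory (e.g. \cite[Theorem 1.1, Chapter II]{bf-1991}) to the saddle-point problem \eqref{dual-mixed-formulation-Neumann-complete-0-new} posed on $\widetilde\X_N\times\Y_N$, so that the only task is to verify its four hypotheses: boundedness of $\mathbf a_N$ and $\mathbf b_N$, the continuous inf-sup condition for $\mathbf b_N$ on $\widetilde\X_N\times\Y_N$, and the ellipticity of $\mathbf a_N$ on the kernel $\widetilde\V_N$. The boundedness of both forms has already been observed when they were introduced; for $\mathbf a_N$ it is a direct consequence of the Cauchy--Schwarz inequality together with the mapping properties of $\gamma^-_\bnu$ and of $\V,\K,\K^{\tt t},\W$ collected in Lemma \ref{properties-2}.

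For the inf-sup condition I would note that the expression defining $\mathbf b_N$ in \eqref{b-N-complete-N-0} coincides with the one in \eqref{b-N-complete} and does not involve the component $\bps$. Hence, since the supremum in Lemma \ref{inf-sup-b} is already taken over $\bta\in\mathbb H_0(\bdiv;\Omega^-)$ (with $\bps=\0$) and $\mathbf H^{1/2}_0(\Gamma)\subseteq\mathbf H^{1/2}(\Gamma)$, that same estimate yields immediately that $\mathbf b_N:\widetilde\X_N\times\Y_N\to\R$ satisfies the continuous inf-sup condition with the very same constant $\beta>0$.

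The key point is the ellipticity of $\mathbf a_N$ on $\widetilde\V_N$. Given $(\bta,\bps)\in\widetilde\V_N$, the tensor $\bta$ is symmetric, divergence-free in $\Omega^-$ and, belonging to $\mathbb H_0(\bdiv;\Omega^-)$, has vanishing normal trace on $\Gamma_0$; thus \eqref{quasi-coerciveness-a-N-new} (established in Lemma \ref{quasi-coerciveness-a-N} from Lemmas \ref{cota-tau}, \ref{otra-cota-tau} and \ref{properties-2}) applies and gives
\[
\mathbf a_N((\bta,\bps),(\bta,\bps))\,\ge\,\widetilde\alpha\,\Big\{\,\|\bta\|^2_{\bdiv;\Omega^-}\,+\,\|\bps-\boldsymbol\pi_{_{RM}}\bps\|^2_{1/2,\Gamma}\,\Big\}.
\]
Now, because on $\widetilde\X_N$ the component $\bps$ lies in $\mathbf H^{1/2}_0(\Gamma)$, one has $\boldsymbol\pi_{_{RM}}\bps=\0$, so the right-hand side equals $\widetilde\alpha\,(\|\bta\|^2_{\bdiv;\Omega^-}+\|\bps\|^2_{1/2,\Gamma})=\widetilde\alpha\,\|(\bta,\bps)\|^2_{\widetilde\X_N}$; hence $\mathbf a_N$ is genuinely $\widetilde\V_N$-elliptic. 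This is precisely where the passage from $\X_N$ to $\widetilde\X_N$ pays off: the quasi-coerciveness of $\mathbf a_N$ (positive only modulo $\ker\W=\mathbf{RM}(\Gamma)$) becomes true coercivity once the rigid-motion part of $\bps$ is removed from the space, which is legitimate by the symmetry-type property of $\W$ and the characterization of its kernel, as already discussed before the statement. With the four hypotheses in hand, Babu\v ska--Brezzi theory yields existence, uniqueness and an a priori bound by $\|\F_N\|+\|\G_N\|$; since $\F_N=0$ and $\G_N(\bv,\bet)=-\int_{\Omega^-}\f\cdot\bv$ satisfies $\|\G_N\|\le\|\f\|_{0,\Omega^-}$, the asserted estimate follows with $C$ depending only on the boundedness constants of $\mathbf a_N,\mathbf b_N$, on $\beta$ and on $\widetilde\alpha$. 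I do not expect any real obstacle here: all the substantive work — the coerciveness estimate and the identification of the homogeneous kernel — has already been carried out, and the only thing to be careful about is that the ellipticity really is full on $\widetilde\V_N$, which is guaranteed exactly by the choice $\mathbf H^{1/2}_0(\Gamma)$.
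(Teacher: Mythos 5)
Your proposal is correct and follows essentially the same route as the paper: the inf-sup condition for $\mathbf b_N$ on $\widetilde\X_N\times\Y_N$ is inherited from Lemma \ref{inf-sup-b} because $\mathbf b_N$ is independent of $\bps$, the coercivity of $\mathbf a_N$ on $\widetilde\V_N$ follows from \eqref{quasi-coerciveness-a-N-new} combined with $\boldsymbol\pi_{_{RM}}\bps=\0$ for $\bps\in\mathbf H^{1/2}_0(\Gamma)$, and then \cite[Theorem 1.1, Chapter II]{bf-1991} gives the stated well-posedness and bound with $\F_N=0$ and $\|\G_N\|\le\|\f\|_{0,\Omega^-}$. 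No gaps.
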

\section{Galerkin schemes of the coupled formulations}\label{section6}
In this section we study the well-posedness of the Galerkin schemes associated with
each one of the coupled variational formulations analyzed in Section \ref{section5}.
\subsection{J \& N coupling with homogeneous Neumann on $\Gamma_0$}\label{section41}
We first let $\mathbb H^\bsi_{h,0}$, $\mathbf H^\bva_{\tilde h}$, $\mathbf L^\u_h$, and $\mathbb
L^\bch_h$ be finite dimensional subspaces of $\mathbb H_0(\bdiv;\Omega^-)$, $\mathbf
H^{1/2}(\Gamma)$, $\mathbf L^2(\Omega^-)$, and $\mathbb L^2_{\tt skew}(\Omega^-)$,
respectively, and define
\begin{equation}\label{eq-61-0}
\mathbf H^\bva_{\tilde h,0}\,:=\,\Big\{ \bps\in \mathbf H^\bva_{\tilde h}: \quad 
\langle \mathbf r, \bps\rangle_{1/2,\Gamma}
\,=\,0 \quad \forall\,\mathbf r \in \mathbf{RM}(\Gamma)\Big\}\,,
\end{equation}
which is clearly a subspace of $\mathbf H^{1/2}_0(\Gamma)$.
Note that, because of reasons that will become clear below, we take a different
meshsize for defining $\mathbf H^\bva_{\tilde h}$ (and hence $\mathbf H^\bva_{\tilde h,0}$). 
Then we introduce the product spaces
\[
\widetilde \X_{N,h}\,:=\, \mathbb H^\bsi_{h,0} \,\times\,\mathbf H^\bva_{\tilde h,0}
\qan \Y_{N,h}\,:=\,\mathbf L^\u_h \,\times\, \mathbb L^\bch_h\,,
\]
and define the Galerkin scheme associated with \eqref{dual-mixed-formulation-complete-new} 
as: Find $((\bsi_h,\bva_{\tilde h}),(\u_h,\bch_h))\,\in\,
{\widetilde{\X}_{N,h}}\times{\Y_{N,h}}$ such that
\begin{equation}\label{dual-mixed-formulation-complete-new-h}
\begin{array}{rcl}
\mathbf a_N((\bsi_h,\bva_{\tilde h}),(\bta,\bps))\,+\,\mathbf b_N((\bta,\bps),(\u_h,\bch_h)) &=& \F_N(\bta,\bps)
\qquad\forall\,(\bta,\bps)\,\in\, {\widetilde{\X}_{N,h}}\,,\\[2ex]
\mathbf b_N((\bsi_h,\bva_{\tilde h}),(\bv,\bet)) &=& \G_N(\bv,\bet)
\qquad\forall\,(\bv,\bet)\,\in\, {\Y_{N,h}}\,,
\end{array}
\end{equation}
where $\a_N$, $\b_N$, $\F_N$, and $\G_N$ are the bilinear forms and functionals defined in 
Section \ref{section331}.

\medskip
In order to prove the unique solvability, stability, and convergence 
of \eqref{dual-mixed-formulation-complete-new-h}, we have in mind the discrete
Babu\v ska-Brezzi theory and consider in what follows the following assumptions:

\medskip\noindent
{\bf (H.1)} the bilinear form $\mathbf b_N$ satisfies the discrete inf-sup condition
uniformly on $\widetilde \X_{N,h} \,\times \Y_{N,h}$, that is there exists $\tilde \beta > 0$,
independent of $h$ and $\tilde h$, such that 
\begin{equation}\label{discrete-inf-sup-b-N}
\sup_{(\bta_h,\bps_{\tilde h})\in\widetilde\X_{N,h} \backslash \{\0\}}
\frac{\mathbf b_N((\bta_h,\bps_{\tilde h}),(\bv,\bet))}{\|(\bta_h,\bps_{\tilde h})\|_{\X_N}}\,\ge\,
\tilde\beta\,\|(\bv,\bet)\|_{\Y_N}
\qquad\forall \,(\bv,\bet)\in \Y_{N,h}\,.
\end{equation}

\medskip\noindent
{\bf (H.2)} $\bdiv\,\mathbb H^\bsi_{h,0}\,\subseteq\,\mathbf L^\u_h$.

\medskip\noindent
{\bf (H.3)} there exists $\epsilon \in (0,1/2)$ such that 
$\mathbf H^\bva_{\tilde h,0}\,\subseteq\,\mathbf H^{1/2+\epsilon}(\Gamma)$ 
for each $\tilde h > 0$.

\medskip\noindent
{\bf (H.4)} the finite element subspace $\mathbf H^\bva_{\tilde h,0}$ satisfies the inverse
inequality, that is there exists $C > 0$, independent of $\tilde h$, such that
\begin{equation}\label{inverse-tilde-h}
\|\bva_{\tilde h}\|_{1/2+\delta,\Gamma}\,\le\,C\,\tilde h^{-\delta}\,
\|\bva_{\tilde h}\|_{1/2,\Gamma} \qquad\forall\,\bva_{\tilde h}\in \mathbf H^\bva_{\tilde h,0}\,,
\quad \forall\,\delta\,\in\,[0,\epsilon]\,.
\end{equation}

\medskip\noindent
{\bf (H.5)} the orthogonal projector $\Pi^\bch_h : \mathbb L^2_{\tt skew}(\Omega^-)
\to \mathbb L^\bch_h$ satisfies the approximation property
\begin{equation}\label{approximation-L-bch}
\|\bet - \Pi^\bch_h(\bet)\|_{0,\Omega^-}\,\le\,C\,h^\delta\,\|\bet\|_{\delta,\Omega^-}
\qquad\forall\,\bet\,\in\,\mathbb L^2_{\tt skew}(\Omega^-) \,\cap\,\mathbb H^\delta(\Omega^-)\,,
\quad \forall\,\delta\in [0,1]\,. 
\end{equation}

\medskip
We notice that, being the bilinear form $\mathbf b_N$ independent of the
$\bps$-component, its discrete inf-sup condition (cf. \eqref{discrete-inf-sup-b-N} in
{\bf (H.1)}) involves only the subspaces $\mathbb H^\bsi_{h,0}$, $\mathbf L^\u_h$, and 
$\mathbb L^\bch_h$. In addition, since the discrete kernel of $\mathbf b_N$ becomes
\begin{equation}\label{tilde-V-N-h}
\begin{array}{c}
\disp
\widetilde\V_{N,h}\,:=\,\Bigg\{\,(\bta_h,\bps_{\tilde h})\in\widetilde{\X}_{N,h}\,:=\,
\mathbb H^\bsi_{h,0} \times \mathbf H^\bva_{\tilde h,0}: \quad
\int_{\Omega^-} \bv_h \cdot\bdiv \bta_h\,=\,0 \quad \forall\,\bv_h\,\in\, \mathbf L^\u_h\\[2ex]
\disp\qan  \int_{\Omega^-} \bta_h : \bet_h\,=\,0 \quad \forall\,\bet_h\,\in\,\mathbb L^\bch_h    \,\Bigg\}\,,
\end{array}
\end{equation}
it is straightforward to see that hypothesis {\bf (H.2)} yields
\begin{equation}\label{free-divergence}
\bdiv \bta_h\,=\,\0 \qin \Omega^- \qquad \forall \, (\bta_h,\bps_{\tilde h}) \,\in\, \widetilde\V_{N,h}\,.
\end{equation}

\medskip
The statement \eqref{free-divergence} and hypotheses {\bf (H.3)} - {\bf (H.5)} are utilized
next to prove that $\mathbf a_N$ is uniformly strongly coercive on $\widetilde\V_{N,h}$.
The fact that $\D : \mathbf H^{1/2+\delta}(\Gamma) \to \mathbf H^{1+\delta}(\Omega^-)$ is a bounded 
linear operator for each $\delta \in [0,1/2)$, which
follows from similar arguments to those mentioned in the proof of Lemma  \ref{properties-1},
and which certainly extends the corresponding result for $\D$, will also be employed.
\begin{lemma}\label{a-N-coercive-discrete}
There exist $\,\,\tilde\alpha,\,c_0 > 0$, independent of $h$ and $\tilde h$, such that whenever 
$h\,\le\,c_0\,\tilde h$, there holds
\begin{equation}\label{eq-a-N-coercive-discrete}
\mathbf a_N\big((\bta_h,\bps_{\tilde h}),(\bta_h,\bps_{\tilde h})\big)\,\,\ge\,\,\tilde\alpha\,
\|(\bta_h,\bps_{\tilde h})\|^2_{\X_N} \qquad \forall\,(\bta_h,\bps_{\tilde h})\,\in\,
\widetilde\V_{N,h}\,.
\end{equation}
\end{lemma}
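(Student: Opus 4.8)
The plan is to follow the proof of the continuous positiveness result of Lemma~\ref{quasi-coerciveness-a} as closely as possible, the essential new difficulty being that the tensors $\bta_h$ occurring in $\widetilde\V_{N,h}$ are no longer symmetric. First I would fix $(\bta_h,\bps_{\tilde h})\in\widetilde\V_{N,h}$: by \textbf{(H.2)} (cf. \eqref{free-divergence}) we have $\bdiv\,\bta_h=\0$ in $\Omega^-$, while $\bta_h\in\mathbb H^\bsi_{h,0}\subseteq\mathbb H_0(\bdiv;\Omega^-)$ gives $\gamma^-_\bnu(\bta_h)=\0$ on $\Gamma_0$, and by \textbf{(H.3)} we have $\bps_{\tilde h}\in\mathbf H^{1/2+\epsilon}(\Gamma)$, hence $\D\,\bps_{\tilde h}\in\mathbf H^{1+\epsilon}(\Omega^-)$ by the boundedness of $\D$ recalled above. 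Arguing exactly as for \eqref{previous}, that is using \eqref{trace-2} and that $\K^{\tt t}$ is the adjoint of $\K$, one obtains
\[
\mathbf a_N\big((\bta_h,\bps_{\tilde h}),(\bta_h,\bps_{\tilde h})\big)\,=\,\|\bta_h^{\tt d}\|^2_{0,\Omega^-}\,+\,\langle\W\,\bps_{\tilde h},\bps_{\tilde h}\rangle_\Gamma\,+\,\langle\gamma^-_\bnu(\bta_h),\gamma^-(\D\,\bps_{\tilde h})\rangle_\Gamma\,.
\]

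Next I would integrate by parts in $\Omega^-$ (legitimate since $\D\,\bps_{\tilde h}\in\mathbf H^1(\Omega^-)$), use $\bdiv\,\bta_h=\0$ and $\gamma^-_\bnu(\bta_h)=\0$ on $\Gamma_0$, and split $\bta_h$ into its symmetric and skew-symmetric parts. Since $\e(\D\,\bps_{\tilde h})$ is symmetric and, because $\div\,\D\,\bps_{\tilde h}=0$, trace-free, this yields
\[
\langle\gamma^-_\bnu(\bta_h),\gamma^-(\D\,\bps_{\tilde h})\rangle_\Gamma\,=\,\int_{\Omega^-}\e(\D\,\bps_{\tilde h}):\bta_h^{\tt d}\,+\,\int_{\Omega^-}\bch_{\tilde h}:\tfrac12\big(\bta_h-\bta_h^{\tt t}\big)\,,
\]
where $\bch_{\tilde h}:=\tfrac12\big(\nabla\D\,\bps_{\tilde h}-(\nabla\D\,\bps_{\tilde h})^{\tt t}\big)\in\mathbb L^2_{\tt skew}(\Omega^-)$. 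The first summand is treated verbatim as in Lemma~\ref{quasi-coerciveness-a} by Cauchy--Schwarz, Young's inequality, and the identity \eqref{eq-W-psi-psi}, contributing the lower bound $\tfrac12\|\bta_h^{\tt d}\|^2_{0,\Omega^-}+\tfrac12\langle\W\,\bps_{\tilde h},\bps_{\tilde h}\rangle_\Gamma$; the genuinely new term is the second one.

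The decisive step is the estimate of $\int_{\Omega^-}\bch_{\tilde h}:\tfrac12(\bta_h-\bta_h^{\tt t})$. The key is that the definition of $\widetilde\V_{N,h}$ forces the skew-symmetric part of $\bta_h$ to be $\mathbb L^2(\Omega^-)$-orthogonal to $\mathbb L^\bch_h$, so that one may subtract $\Pi^\bch_h\bch_{\tilde h}$ at no cost; Cauchy--Schwarz, the approximation property \textbf{(H.5)} with $\delta=\epsilon$, the boundedness $\D:\mathbf H^{1/2+\epsilon}(\Gamma)\to\mathbf H^{1+\epsilon}(\Omega^-)$, and the inverse inequality \textbf{(H.4)} with $\delta=\epsilon$ then give
\[
\left|\int_{\Omega^-}\bch_{\tilde h}:\tfrac12(\bta_h-\bta_h^{\tt t})\right|\,\le\,\big\|(\I-\Pi^\bch_h)\bch_{\tilde h}\big\|_{0,\Omega^-}\,\|\bta_h\|_{0,\Omega^-}\,\le\,C\,(h/\tilde h)^{\epsilon}\,\|\bps_{\tilde h}\|_{1/2,\Gamma}\,\|\bta_h\|_{0,\Omega^-}\,,
\]
with $C$ independent of $h$ and $\tilde h$; this is exactly where the two distinct meshsizes and the assumption $h\le c_0\tilde h$ come into play.

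To conclude, following the proof of the continuous theorem I would use Lemmas~\ref{cota-tau} and~\ref{otra-cota-tau} together with $\bdiv\,\bta_h=\0$ and $\gamma^-_\bnu(\bta_h)=\0$ on $\Gamma_0$ to get $\|\bta_h\|^2_{0,\Omega^-}=\|\bta_h\|^2_{\bdiv;\Omega^-}\le(c_1c_2)^{-1}\|\bta_h^{\tt d}\|^2_{0,\Omega^-}$, and \eqref{W-eliptico} (applicable since $\bps_{\tilde h}\in\mathbf H^{1/2}_0(\Gamma)$) to get $\langle\W\,\bps_{\tilde h},\bps_{\tilde h}\rangle_\Gamma\ge\alpha_2\|\bps_{\tilde h}\|^2_{1/2,\Gamma}$. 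Inserting these bounds, applying Young's inequality to the perturbation term so that the portion absorbed into $\|\bta_h^{\tt d}\|^2_{0,\Omega^-}$ leaves a strictly positive coefficient, and then choosing $c_0$ small enough that the factor $(h/\tilde h)^{2\epsilon}$ makes the portion absorbed into $\|\bps_{\tilde h}\|^2_{1/2,\Gamma}$ leave a strictly positive coefficient as well, one arrives at
\[
\mathbf a_N\big((\bta_h,\bps_{\tilde h}),(\bta_h,\bps_{\tilde h})\big)\,\ge\,\tfrac{c_1c_2}{4}\,\|\bta_h\|^2_{\bdiv;\Omega^-}\,+\,\tfrac{\alpha_2}{4}\,\|\bps_{\tilde h}\|^2_{1/2,\Gamma}\,\ge\,\tilde\alpha\,\|(\bta_h,\bps_{\tilde h})\|^2_{\X_N}
\]
for suitable $\tilde\alpha,\,c_0>0$ independent of $h$ and $\tilde h$, which is \eqref{eq-a-N-coercive-discrete}. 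The main obstacle is precisely the asymmetry of the discrete stresses: the clean cancellation available in \eqref{above} at the continuous level disappears, and it must be replaced by the consistency-type bound above, which is small enough to be absorbed only under the scale separation $h\le c_0\tilde h$; arranging the two Young parameters so that both $\|\bta_h\|_{\bdiv;\Omega^-}$ and $\|\bps_{\tilde h}\|_{1/2,\Gamma}$ survive with constants independent of $h$ and $\tilde h$ is the delicate bookkeeping.
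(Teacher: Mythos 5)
Your argument is correct and is essentially the paper's own proof: the same splitting of $\nabla(\D\,\bps_{\tilde h})$ into its symmetric and skew parts, the same use of the orthogonality $\int_{\Omega^-}\bta_h:\bet_h=0$ to subtract $\Pi^\bch_h$ of the skew part, and the same chain {\bf (H.5)}--{\bf (H.3)}--boundedness of $\D$--{\bf (H.4)} producing the factor $(h/\tilde h)^\epsilon$ that is absorbed under $h\le c_0\tilde h$. The only (harmless) differences are that you estimate the perturbation against $\|\bta_h\|_{0,\Omega^-}$ rather than $\|\bta_h^{\tt d}\|_{0,\Omega^-}$, compensating via Lemmas \ref{cota-tau} and \ref{otra-cota-tau}, and that you spell out the final passage from $\|\bta_h^{\tt d}\|_{0,\Omega^-}$ to the full $\|\bta_h\|_{\bdiv;\Omega^-}$ norm, which the paper leaves implicit.
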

\begin{proof}
Given $(\bta_h,\bps_{\tilde h})\,\in\, \widetilde\V_{N,h}$, we first proceed as in the proof of
Lemma \ref{quasi-coerciveness-a} and obtain
\[
\mathbf a_N\big((\bta_h,\bps_{\tilde h}),(\bta_h,\bps_{\tilde h})\big)\,=\,
\|\bta^{\tt d}_h\|^2_{0,\Omega^-}\,+\,\langle \W\bps_{\tilde h},\bps_{\tilde h}\rangle_\Gamma
\,+\,\int_{\Omega^-} \nabla\big(\D\,\bps_{\tilde h}\big) : \bta^{\tt d}_h\,.
\]
Note that, while $\bta_h$ is divergence free (according to \eqref{free-divergence}), its lack
of strong symmetry stops us of replacing 
$\,\disp\int_{\Omega^-} \nabla\big(\D\,\bps_{\tilde h}\big) : \bta^{\tt d}_h$
 \,\,exactly \,by\,\, $\disp\int_{\Omega^-} \mathbf e\big(\D\,\bps_{\tilde h}\big) : \bta^{\tt d}_h$, 
as we did in that proof. However, what we can certainly do in the present discrete case is to write
\[
\nabla\big(\D\,\bps_{\tilde h}\big)\,\,=\,\,\mathbf e\big(\D\,\bps_{\tilde h}\big)
\,\,+\,\,\bet\big(\D\,\bps_{\tilde h}\big)\,,
\]
where
\[
\bet\big(\D\,\bps_{\tilde h}\big)\,=\,\frac12\,\Big\{\,\nabla\big(\D\,\bps_{\tilde h}\big)
\,+\,\Big(\nabla\big(\D\,\bps_{\tilde h}\big)\Big)^{\tt t}\,\Big\}\,.
\]
In this way, we have that
\[
\mathbf a_N\big((\bta_h,\bps_{\tilde h}),(\bta_h,\bps_{\tilde h})\big)\,=\,
\|\bta^{\tt d}_h\|^2_{0,\Omega^-}\,+\,\langle \W\bps_{\tilde h},\bps_{\tilde h}\rangle_\Gamma
\,+\,\int_{\Omega^-} \mathbf e\big(\D\,\bps_{\tilde h}\big) : \bta^{\tt d}_h\,+\,
\int_{\Omega^-} \bet \big(\D\,\bps_{\tilde h}\big) : \bta^{\tt d}_h\,,
\]
which, following the last part of the proof of Lemma \ref{quasi-coerciveness-a}, yields
\begin{equation}\label{eq-1-a-Ncd}
\mathbf a_N\big((\bta_h,\bps_{\tilde h}),(\bta_h,\bps_{\tilde h})\big)\,\ge\,
\frac12\,\Big\{\|\bta^{\tt d}_h\|^2_{0,\Omega^-}\,+\,\langle \W\bps_{\tilde h},\bps_{\tilde h}\rangle_\Gamma\,\Big\}
\,\,-\,\,\Big|\int_{\Omega^-} \bet \big(\D\,\bps_{\tilde h}\big) : \bta^{\tt d}_h\,\Big|\,.
\end{equation}
Next, using from \eqref{tilde-V-N-h} 
that $\disp \int_{\Omega^-} \bta^{\tt d}_h : \bet_h\,=\,\int_{\Omega^-} \bta_h : \bet_h\,=\,0 
\quad \forall\,\bet_h\,\in\,\mathbb L^\bch_h$, we find that
\[
\int_{\Omega^-} \bet \big(\D\,\bps_{\tilde h}\big) : \bta^{\tt d}_h\,=\,
\int_{\Omega^-} \Big\{\, \bet \big(\D\,\bps_{\tilde h}\big)\,-\,
\Pi^\bch_h\big(\bet \big(\D\,\bps_{\tilde h}\big)\big) \,\Big\} : \bta^{\tt d}_h\,,
\]
from which, applying the approximation property of $\mathbb L^\bch_h$ (cf. \eqref{approximation-L-bch}
in {\bf (H.5)}), {\bf (H.3)}, the boundedness of $\D : \mathbf H^{1/2+\epsilon}(\Gamma) 
\to \mathbf H^{1+\epsilon}(\Omega^-)$, and the inverse inequality satisfied by
$\mathbf H^\bva_{\tilde h,0}$ (cf. \eqref{inverse-tilde-h} in {\bf (H.4)}), 
we deduce that
\begin{equation}\label{eq-2-a-Ncd}
\begin{array}{l}
\disp
\Big|\int_{\Omega^-} \bet \big(\D\,\bps_{\tilde h}\big) : \bta^{\tt d}_h\,\Big|\,\le\,
\big\|\bet \big(\D\,\bps_{\tilde h}\big)\,-\,
\Pi^\bch_h\big(\bet \big(\D\,\bps_{\tilde h}\big)\big)\big\|_{0,\Omega^-}
\,\,\|\bta^{\tt d}_h\|_{0,\Omega^-}\\[2ex]
\disp\qquad\quad
\le \,\, C\,h^\epsilon\,\big\|\bet \big(\D\,\bps_{\tilde h}\big)\big\|_{\epsilon,\Omega^-}\,\,
\|\bta^{\tt d}_h\|_{0,\Omega^-}\,\le\,
C\,h^\epsilon\,\big\|\nabla \D\,\bps_{\tilde h}\big\|_{\epsilon,\Omega^-}\,\,
\|\bta^{\tt d}_h\|_{0,\Omega^-}\\[2ex]
\disp\qquad\quad
\le \,\,C\,h^\epsilon\,\big\|\D\,\bps_{\tilde h}\big\|_{1+\epsilon,\Omega^-}\,\,
\|\bta^{\tt d}_h\|_{0,\Omega^-}\,\le\,C\,h^\epsilon\,\big\|\bps_{\tilde h}\big\|_{1/2+\epsilon,\Gamma}\,\,
\|\bta^{\tt d}_h\|_{0,\Omega^-}\\[2ex]
\disp\qquad\quad
\le \,\, C\,\left\{\frac{h}{\tilde h}\right\}^\epsilon\,\big\|\bps_{\tilde h}\big\|_{1/2,\Gamma}\,\,
\|\bta^{\tt d}_h\|_{0,\Omega^-} \,\le\,C\,\left\{\frac{h}{\tilde h}\right\}^\epsilon\,
\left\{\,\frac12\,\big\|\bps_{\tilde h}\big\|^2_{1/2,\Gamma}\,+\,
\frac12\,\|\bta^{\tt d}_h\|^2_{0,\Omega^-}\,\right\}\,.
\end{array}
\end{equation}
Therefore, using that $\langle \W\,\bps,\bps\rangle_\Gamma 
\,\ge\,\alpha_2\,\|\bps\|^2_{1/2,\Gamma}\quad\forall\,\bps\,\in\,\mathbf H^{1/2}_0(\Gamma)$
(cf. \eqref{W-eliptico}), it follows from \eqref{eq-1-a-Ncd} and \eqref{eq-2-a-Ncd} that 
\[
\mathbf a_N\big((\bta_h,\bps_{\tilde h}),(\bta_h,\bps_{\tilde h})\big)\,\ge\,
\frac12\,\left\{1\,-\, C\,\left\{\frac{h}{\tilde h}\right\}^\epsilon \right\}\,\|\bta^{\tt d}_h\|^2_{0,\Omega^-}
\,+\,\frac12\,\left\{\alpha_2\,-\, C\,\left\{\frac{h}{\tilde h}\right\}^\epsilon \right\}\,
\|\bps_{\tilde h}\|^2_{1/2,\Gamma}\,,
\]
which yields the existence of a sufficiently small constant $c_0 > 0$ such that
for each $h \,\le\,c_0\,\tilde h$, $\mathbf a_N$ is uniformly strongly coercive
on $\widetilde\V_{N,h}$.

\end{proof}

\medskip
The well-posedness and convergence of \eqref{dual-mixed-formulation-complete-new-h} can now be established.
\begin{theorem}
Assume that the hypotheses {\bf (H.1)} up to {\bf (H.5)} are satisfied, and let $c_0$ be the
positive constant provided by Lemma {\rm \ref{a-N-coercive-discrete}}. Then, for
each $h \,\le\,c_0\,\tilde h$ there exists a unique $((\bsi_h,\bva_{\tilde h}),(\u_h,\bch_h))$
$\,\in\,{\widetilde{\X}_{N,h}}\times{\Y_{N,h}}$ solution of \eqref{dual-mixed-formulation-complete-new-h}.
In addition, there exist $C_1, \,C_2 \,>\,0$, independent of $h$ and $\tilde h$, such that
\[
\|((\bsi_h,\bva_{\tilde h}),(\u_h,\bch_h))\|_{\X_N \times \Y_N}\,\le\,C_1\,\|\mathbf f\|_{0,\Omega^-}\,,
\]
and
\begin{equation}\label{convergence}
\begin{array}{l}
\disp
\|((\bsi,\bva),(\u,\bch))\,-\,((\bsi_h,\bva_{\tilde h}),(\u_h,\bch_h))\|_{\X_N \times \Y_N}\\[2ex]
\disp\qquad
\,\le\,C_2\,\inf_{((\bta_h,\bps_{\tilde h}),(\bv_h,\bet_h))\in {\widetilde{\X}_{N,h}}\times{\Y_{N,h}}}
\|((\bsi,\bva),(\u,\bch))\,-\,((\bta_h,\bps_{\tilde h}),(\bv_h,\bet_h))\|_{\X_N \times \Y_N}\,.
\end{array}
\end{equation}

\end{theorem}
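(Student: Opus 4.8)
The plan is to invoke the discrete Babu\v ska-Brezzi theory, in exactly the same form that was used for the continuous problem \eqref{dual-mixed-formulation-complete-new}. First I would observe that the scheme is conforming: $\widetilde\X_{N,h}=\mathbb H^\bsi_{h,0}\times\mathbf H^\bva_{\tilde h,0}\subseteq\mathbb H_0(\bdiv;\Omega^-)\times\mathbf H^{1/2}_0(\Gamma)=\widetilde\X_N$ and $\Y_{N,h}=\mathbf L^\u_h\times\mathbb L^\bch_h\subseteq\Y_N$, so the abstract framework applies without modification. There are then three hypotheses to verify. Boundedness of $\a_N$ and $\b_N$ is immediate, since these are the very same bounded bilinear forms introduced in Section \ref{section331}, with continuity constants independent of $h$ and $\tilde h$. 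The uniform discrete inf-sup condition for $\b_N$ is precisely hypothesis {\bf (H.1)}, which provides a constant $\tilde\beta>0$ independent of the mesh sizes. Finally, the uniform coercivity of $\a_N$ on the discrete kernel $\widetilde\V_{N,h}$ (cf. \eqref{tilde-V-N-h}), with a constant $\tilde\alpha>0$ independent of $h$ and $\tilde h$, is exactly Lemma \ref{a-N-coercive-discrete}, which holds under the mesh restriction $h\le c_0\tilde h$ and into which hypotheses {\bf (H.2)}--{\bf (H.5)} and the coupling of the two discretization parameters enter; this is really the only non-routine ingredient, and it has already been established.

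With these three facts at hand, the discrete Babu\v ska-Brezzi theorem (cf. \cite[Theorem 1.1, Chapter II]{bf-1991}) gives, for every $h\le c_0\tilde h$, the existence of a unique solution $((\bsi_h,\bva_{\tilde h}),(\u_h,\bch_h))\in\widetilde\X_{N,h}\times\Y_{N,h}$ of \eqref{dual-mixed-formulation-complete-new-h}, together with a stability estimate whose right-hand side is controlled by the norms of $\F_N$ and $\G_N$ and whose constant depends only on $\|\a_N\|$, $\|\b_N\|$, $\tilde\alpha$ and $\tilde\beta$, hence is independent of $h$ and $\tilde h$. Since $\F_N\equiv 0$ and $|\G_N(\bv,\bet)|\le\|\f\|_{0,\Omega^-}\,\|(\bv,\bet)\|_{\Y_N}$, this immediately yields the a priori bound $\|((\bsi_h,\bva_{\tilde h}),(\u_h,\bch_h))\|_{\X_N\times\Y_N}\le C_1\,\|\f\|_{0,\Omega^-}$ with $C_1>0$ independent of $h$ and $\tilde h$.

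For the convergence estimate \eqref{convergence}, I would appeal to the quasi-optimality (C\'ea-type) bound that accompanies the discrete Babu\v ska-Brezzi theory for conforming Galerkin approximations of saddle-point problems, comparing $((\bsi_h,\bva_{\tilde h}),(\u_h,\bch_h))$ with the unique solution $((\bsi,\bva),(\u,\bch))$ of \eqref{dual-mixed-formulation-complete-new} guaranteed by the earlier theorem. Since $\widetilde\X_{N,h}\times\Y_{N,h}$ is conforming and the constants $\|\a_N\|$, $\|\b_N\|$, $\tilde\alpha$, $\tilde\beta$ are all uniform in $h$ and $\tilde h$, the abstract estimate produces a constant $C_2>0$, independent of both discretization parameters, bounding the Galerkin error by the best approximation error, which is exactly \eqref{convergence}. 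No genuine obstacle arises in this final part: all the substantive work was carried out in Lemma \ref{a-N-coercive-discrete}, and what remains is a direct application of the abstract theory, the only point deserving care being to track that every constant produced is independent of $h$ and $\tilde h$.
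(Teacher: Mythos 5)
Your proposal is correct and is exactly the argument the paper intends: the theorem is stated there without a written proof precisely because it is the routine application of the discrete Babu\v ska-Brezzi theory, with the uniform discrete inf-sup condition supplied by {\bf (H.1)} and the uniform coercivity of $\a_N$ on $\widetilde\V_{N,h}$ supplied by Lemma \ref{a-N-coercive-discrete} under the restriction $h\le c_0\tilde h$. You correctly identify that the conformity of the subspaces and the mesh-independence of all constants are the only points to track, so nothing is missing.
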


\subsection{C \& H coupling with non-homogeneous Dirichlet on $\Gamma_0$}\label{section62}
We now let $\mathbb H^\bsi_h$, $\mathbf H^\bva_h$, $\mathbf L^\u_h$, and $\mathbb
L^\bch_h$ be finite dimensional subspaces of $\mathbb H(\bdiv;\Omega^-)$, $\mathbf
H^{1/2}(\Gamma)$, $\mathbf L^2(\Omega^-)$, and $\mathbb L^2_{\tt skew}(\Omega^-)$,
respectively, and define
\begin{equation}\label{eq-62-0}
\mathbf H^\bva_{h,0}\,:=\,\Big\{ \bps\in \mathbf H^\bva_h: \quad 
\langle \mathbf r, \bps\rangle_{\Gamma}
\,=\,0 \quad \forall\,\mathbf r \in \mathbf{RM}(\Gamma)\Big\}\,=
\H^\bva_h\cap \mathbf H^{1/2}_0(\Gamma).
\end{equation}
Note that, differently from the previous section, in this case we do not need to take any different
meshsize for $\mathbf H^\bva_{h,0}$. Then we introduce the product spaces
\[
\widetilde \X_{D,h}\,:=\, \mathbb H^\bsi_h \,\times\,\mathbf H^\bva_{h,0}
\qan \Y_{D,h}\,:=\,\mathbf L^\u_h \,\times\, \mathbb L^\bch_h\,,
\]
and define the Galerkin scheme associated with \eqref{dual-mixed-formulation-Dirichlet-complete-new} as:
Find $((\bsi_h,\bva_h),(\u_h,\bch_h))\,\in\,
{\widetilde{\X}_{D,h}}\times{\Y_{D,h}}$ such that
\begin{equation}\label{dual-mixed-formulation-Dirichlet-complete-new-h}
\begin{array}{rcl}
\mathbf a_D((\bsi_h,\bva_),(\bta,\bps))\,+\,\mathbf b_D((\bta,\bps),(\u_h,\bch_h)) &=& \F_D(\bta,\bps)
\qquad\forall\,(\bta,\bps)\,\in\, {\widetilde{\X}_{D,h}}\,,\\[2ex]
\mathbf b_D((\bsi_h,\bva_h),(\bv,\bet)) &=& \G_D(\bv,\bet)
\qquad\forall\,(\bv,\bet)\,\in\, {\Y_{D,h}}\,,
\end{array}
\end{equation}
where $\a_D$, $\b_D$, $\F_D$, and $\G_D$ are the bilinear forms and functionals defined in 
Section \ref{section332}.

\medskip
Next, we apply again the discrete Babu\v ska-Brezzi theory to prove the unique solvability, stability, 
and convergence of \eqref{dual-mixed-formulation-Dirichlet-complete-new-h}. To this end,
in what follows we assume the following hypotheses:

\medskip\noindent
$\widetilde{\bf (H.1)}$ the bilinear form $\mathbf b_D$ satisfies the discrete inf-sup condition
uniformly on $\widetilde \X_{D,h} \,\times \Y_{D,h}$, that is there exists $\tilde \beta > 0$,
independent of $h$, such that 
\begin{equation}\label{discrete-inf-sup-b-D}
\sup_{(\bta_h,\bps_h)\in\widetilde\X_{D,h} \backslash \{\0\}}
\frac{\mathbf b_D((\bta_h,\bps_h),(\bv,\bet))}{\|(\bta_h,\bps_h)\|_{\X_D}}\,\ge\,
\tilde\beta\,\|(\bv,\bet)\|_{\Y_D}
\qquad\forall \,(\bv,\bet)\in \Y_{D,h}\,.
\end{equation}

\medskip\noindent
$\widetilde{\bf (H.2)}$ $\bdiv\,\mathbb H^\bsi_h\,\subseteq\,\mathbf L^\u_h$.

\medskip\noindent
$\widetilde{\bf (H.3)}$ there holds $\,P_0(\Omega^-)\,\I \,\subseteq\, \mathbb H^\bsi_h$ \,and\,
$\mathbf{RM}(\Gamma) \,\subseteq\, \mathbf H^\bva_h$.

\medskip\noindent
$\widetilde{\bf (H.4)}$ there exists $\,\widetilde\bps \in \mathbf H^{1/2}(\Gamma)$ such that
$\,\langle \bnu,\widetilde\bps\rangle_\Gamma \,>\,0$ \,and\, $\widetilde\bps\in \mathbf H^\bva_h$
\,for each $h > 0$.

\medskip
Similarly as in the previous section, the bilinear form $\mathbf b_D$ is independent of the $\bps$-component,
and hence its discrete inf-sup condition (cf. \eqref{discrete-inf-sup-b-D} in
$\widetilde{\bf (H.1)}$) involves only the subspaces $\mathbb H^\bsi_{h}$, $\mathbf L^\u_h$, and 
$\mathbb L^\bch_h$. In addition, hypothesis $\widetilde{\bf (H.2)}$ implies now 
that the discrete kernel of $\mathbf b_D$ reduces to
\begin{equation}\label{tilde-V-D-h}
\widetilde\V_{D,h} := \Bigg\{(\bta_h,\bps_h)\in\widetilde{\X}_{D,h}: \quad
\bdiv \bta_h = \0 \qin \Omega^-
\qan  \int_{\Omega^-} \bta_h : \bet_h = 0 \quad \forall\,\bet_h\,\in\,\mathbb L^\bch_h \Bigg\}\,.
\end{equation}

\medskip
As announced right before the statement of Lemma \ref{weak-coercive-a-D-direct}, we now
establish the discrete weak-coerciveness of $\a_D$.
\begin{lemma}\label{weak-coercive-a-D-direct-discrete}
There exists $\widehat\alpha > 0$, independent of $h$ but depending explicitly on $\widetilde\bps$, 
such that
\begin{equation}\label{eq-weak-coercive-a-D-direct-discrete}
\sup_{(\bta,\bps)\in\widetilde\V_{D,h} \backslash\{\0\}}
\frac{|\,\a_D((\bsi_h,\bva_h),(\bta,\bps))\,|}{\|(\bta,\bps)\|_{\X_D}}\,\ge\,
\widehat\alpha\,\|(\bsi_h,\bva_h)\|_{\X_D}
\qquad\forall\,(\bsi_h,\bva_h)\,\in\,\widetilde\V_{D,h}\,.
\end{equation}
\end{lemma}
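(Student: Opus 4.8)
The plan is to transport the $T$-coercivity argument behind Lemma~\ref{weak-coercive-a-D-direct} to the discrete level, the only genuinely new ingredient being that the companion test element built from $(\bsi_h,\bva_h)$ must again lie in the discrete kernel $\widetilde\V_{D,h}$. First I would normalise $\widetilde\bps$: by $\widetilde{\bf (H.4)}$ there is $\widetilde\bps\in\mathbf H^\bva_h$ with $\langle\bnu,\widetilde\bps\rangle_\Gamma>0$, and since $\mathbf{RM}(\Gamma)\subseteq\mathbf H^\bva_h$ (part of $\widetilde{\bf (H.3)}$) the element $\widetilde\bps-\boldsymbol\pi_{_{RM}}\widetilde\bps$ still lies in $\mathbf H^\bva_h$, belongs to $\mathbf H^{1/2}_0(\Gamma)$, hence to $\mathbf H^\bva_{h,0}$, and by \eqref{new4.24} satisfies $\langle\bnu,\widetilde\bps-\boldsymbol\pi_{_{RM}}\widetilde\bps\rangle_\Gamma=\langle\bnu,\widetilde\bps\rangle_\Gamma>0$. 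So we may and do assume henceforth that $\widetilde\bps\in\mathbf H^\bva_{h,0}$.

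With $c(\cdot)$ as in Lemma~\ref{weak-coercive-a-D-direct} and $\delta>0$ chosen exactly as there (its value depends only on $\langle\bnu,\widetilde\bps\rangle_\Gamma$, $\|\widetilde\bps\|_{1/2,\Gamma}$, the norm of $\a_D$, and the coercivity constant $\widetilde C$ of \eqref{eq:5.25}, none of which involves $h$), I would introduce the discrete map $T_h(\bsi_h,\bva_h):=\big(\bsi_h,\,\bva_h+\delta\,c(\bsi_h)\,\widetilde\bps\big)$ on $\widetilde\V_{D,h}$. The crucial check is $T_h(\widetilde\V_{D,h})\subseteq\widetilde\V_{D,h}$: the first component is untouched, so it remains in $\mathbb H^\bsi_h$, divergence free in $\Omega^-$, and $\mathbb L^2(\Omega^-)$-orthogonal to $\mathbb L^\bch_h$ (recall \eqref{tilde-V-D-h}, whose divergence-free characterisation rests on $\widetilde{\bf (H.2)}$), while the second component is a linear combination of $\bva_h\in\mathbf H^\bva_{h,0}$ and $\widetilde\bps\in\mathbf H^\bva_{h,0}$, hence stays in $\mathbf H^\bva_{h,0}$. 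Moreover $T_h$ is bounded with bounded inverse, uniformly in $h$: $\|T_h(\bsi_h,\bva_h)\|_{\X_D}\le\widehat C\,\|(\bsi_h,\bva_h)\|_{\X_D}$ with $\widehat C$ depending only on $\delta$, $\|\widetilde\bps\|_{1/2,\Gamma}$ and the continuity constant of $\bsi_h\mapsto c(\bsi_h)$.

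The rest then mirrors the proof of Lemma~\ref{weak-coercive-a-D-direct} line by line at the discrete level: the identity $\a_D\big((\boldsymbol\pi_{_I}\bsi_h,\0),(\0,c(\bsi_h)\widetilde\bps)\big)=c(\bsi_h)^2\langle\bnu,\widetilde\bps\rangle_\Gamma$ is purely algebraic (it uses only $\boldsymbol\pi_{_I}\bsi_h=c(\bsi_h)\I$, the identity $(\frac12\,\I+\K^{\tt t})\bnu=\bnu$ from \eqref{kernels-V-and-W}, and the explicit form of $\a_D$), the Young-type bound on the cross term $\a_D\big((\bsi_h-\boldsymbol\pi_{_I}\bsi_h,\bva_h),(\0,c(\bsi_h)\widetilde\bps)\big)$ uses only the boundedness of $\a_D$, and inequality \eqref{eq:5.25} holds on all of $\widetilde\V_D\supseteq\widetilde\V_{D,h}$, hence applies to $(\bsi_h,\bva_h)$. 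Combining these exactly as in \eqref{eq:5.24}--\eqref{eq:5.25} produces an $h$-independent constant $C>0$ with $\a_D\big((\bsi_h,\bva_h),T_h(\bsi_h,\bva_h)\big)\ge C\,\|(\bsi_h,\bva_h)\|_{\X_D}^2$; testing the supremum in \eqref{eq-weak-coercive-a-D-direct-discrete} with the admissible element $T_h(\bsi_h,\bva_h)\in\widetilde\V_{D,h}$ and dividing by $\|T_h(\bsi_h,\bva_h)\|_{\X_D}\le\widehat C\|(\bsi_h,\bva_h)\|_{\X_D}$ yields the claim with $\widehat\alpha:=C/\widehat C$. The one real obstacle is therefore the self-map property $T_h(\widetilde\V_{D,h})\subseteq\widetilde\V_{D,h}$ — it is precisely this requirement that dictates the structural hypotheses $\widetilde{\bf (H.2)}$--$\widetilde{\bf (H.4)}$ on the subspaces, and it is also where one must be careful that discarding the rigid-motion component of $\widetilde\bps$ does not destroy the positivity $\langle\bnu,\widetilde\bps\rangle_\Gamma>0$, which is guaranteed by \eqref{new4.24}.
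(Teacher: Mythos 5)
Your overall route is exactly the one the paper intends: the printed proof is a one-liner invoking Lemma \ref{weak-coercive-a-D-direct} together with $\widetilde{\bf (H.3)}$--$\widetilde{\bf (H.4)}$, and your fleshed-out version (normalize $\widetilde\bps$ to $\mathbf H^\bva_{h,0}$ via \eqref{new4.24}, check that $T_h(\bsi_h,\bva_h)=(\bsi_h,\bva_h+\delta c(\bsi_h)\widetilde\bps)$ stays in $\widetilde\V_{D,h}$, bound $\|T_h\|$ uniformly, and divide) is the correct way to make that one-liner precise, with all constants manifestly independent of $h$.

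There is, however, one incorrectly justified step: you assert $\widetilde\V_{D,h}\subseteq\widetilde\V_D$ in order to apply \eqref{eq:5.25} (and, implicitly, the whole of Lemma \ref{weak-coercive-a-D-direct}) to $(\bsi_h,\bva_h)$. This inclusion is false in general: by \eqref{tilde-V-D-h} the discrete kernel only imposes the \emph{weak} symmetry condition $\int_{\Omega^-}\bta_h:\bet_h=0$ for $\bet_h\in\mathbb L^\bch_h$, whereas $\widetilde\V_D$ requires $\bta=\bta^{\tt t}$ pointwise; for PEERS/AFW-type spaces the discrete kernel contains nonsymmetric tensors, and the paper stresses (end of Section \ref{section331}) that enforcing strong discrete symmetry is precisely what is not available --- indeed the advertised virtue of the C \& H form is that its kernel coercivity does not need symmetry. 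The step is repairable without your inclusion: the proof of \eqref{eq:5.25} in Lemma \ref{weak-coercive-a-D}, as well as the algebraic identity and the Young-type bound \eqref{eq:5.24}, use only the adjointness of $\K$ and $\K^{\tt t}$, the estimates \eqref{eq-cota-tau}, \eqref{W-eliptico}, \eqref{V-eliptico-equiv}, the condition $\bdiv\bsi=\0$, and $\bva\in\mathbf H^{1/2}_0(\Gamma)$ --- never $\bsi=\bsi^{\tt t}$ --- so all three ingredients hold verbatim on $\widetilde\V_{D,h}$ thanks to $\widetilde{\bf (H.2)}$ and the definition of $\mathbf H^\bva_{h,0}$. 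With that substitution your argument is complete and coincides with the paper's.
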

\begin{proof}
This is a direct consequence of Lemma \ref{weak-coercive-a-D-direct} using  hypotheses $\widetilde{\bf (H.3)}$ and $\widetilde{\bf (H.4)}$. 
\end{proof}

\medskip
As a consequence of the foregoing analysis, we can provide now the well-posedness and convergence of 
\eqref{dual-mixed-formulation-Dirichlet-complete-new-h}.
\begin{theorem}\label{CH-coupling-NH-D}
Assume that the hypotheses $\widetilde{\bf (H.1)}$ up to $\widetilde{\bf (H.4)}$ are satisfied. Then, 
there exists a unique $((\bsi_h,\bva_h),(\u_h,\bch_h))$
$\,\in\,{\widetilde{\X}_{D,h}}\times{\Y_{D,h}}$ solution of \eqref{dual-mixed-formulation-Dirichlet-complete-new-h}.
In addition, there exist $C_1, \,C_2 \,>\,0$, independent of $h$, such that
\[
\|((\bsi_h,\bva_h),(\u_h,\bch_h))\|_{\X_D \times \Y_D}\,\le\,C_1\,
\Big\{ \|\g_{_D}\|_{1/2,\Gamma_0} \,+\, \|\mathbf f\|_{0,\Omega^-}\Big\}\,,
\]
and
\begin{equation}\label{convergence-CH-NH-D}
\begin{array}{l}
\disp
\|((\bsi,\bva),(\u,\bch))\,-\,((\bsi_h,\bva_h),(\u_h,\bch_h))\|_{\X_D \times \Y_D}\\[2ex]
\disp\qquad
\,\le\,C_2\,\inf_{((\bta_h,\bps_h),(\bv_h,\bet_h))\in {\widetilde{\X}_{D,h}}\times{\Y_{D,h}}}
\|((\bsi,\bva),(\u,\bch))\,-\,((\bta_h,\bps_h),(\bv_h,\bet_h))\|_{\X_D \times \Y_D}\,.
\end{array}
\end{equation}

\end{theorem}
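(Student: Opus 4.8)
The plan is to check, for the Galerkin scheme \eqref{dual-mixed-formulation-Dirichlet-complete-new-h}, the full set of hypotheses of the discrete Babu\v ska--Brezzi theory, mirroring the continuous analysis of Section \ref{section332}, and then to invoke the abstract result together with its associated quasi-optimality estimate (cf. \cite[Theorem 1.1, Chapter II]{bf-1991}). A point worth recording at the outset is that the scheme is \emph{conforming}: by \eqref{eq-62-0} one has $\mathbf H^\bva_{h,0}=\mathbf H^\bva_h\cap\mathbf H^{1/2}_0(\Gamma)$, so that $\widetilde\X_{D,h}\subseteq\widetilde\X_D$ and $\Y_{D,h}\subseteq\Y_D$, and the forms and functionals in \eqref{dual-mixed-formulation-Dirichlet-complete-new-h} are simply the restrictions of $\a_D$, $\b_D$, $\F_D$, $\G_D$ from \eqref{dual-mixed-formulation-Dirichlet-complete-new}.

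First I would collect what is already available: boundedness of $\a_D$ and $\b_D$ was recorded at the beginning of Section \ref{section332}, and from \eqref{F-D-complete} the functionals are bounded, with $|\F_D(\bta,\bps)|\le C\,\|\g_{_D}\|_{1/2,\Gamma_0}\,\|(\bta,\bps)\|_{\X_D}$ (by continuity of $\gamma^-_\bnu$) and $|\G_D(\bv,\bet)|\le\|\f\|_{0,\Omega^-}\,\|(\bv,\bet)\|_{\Y_D}$. Hypothesis $\widetilde{\bf (H.1)}$ supplies the discrete inf--sup condition for $\b_D$ on $\widetilde\X_{D,h}\times\Y_{D,h}$ uniformly in $h$ (and, since $\b_D$ is independent of the $\bps$-component, it in fact only involves $\mathbb H^\bsi_h$, $\mathbf L^\u_h$ and $\mathbb L^\bch_h$), while hypothesis $\widetilde{\bf (H.2)}$ reduces the associated discrete kernel to $\widetilde\V_{D,h}$ as in \eqref{tilde-V-D-h}. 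Note that, in contrast with the continuous kernel $\widetilde\V_D$, this discrete kernel does \emph{not} force $\bta_h$ to be symmetric --- only skew-orthogonal to $\mathbb L^\bch_h$ --- which is precisely why the weak-coerciveness of $\a_D$ must be proved directly on $\widetilde\V_{D,h}$; that is exactly what Lemma \ref{weak-coercive-a-D-direct-discrete} accomplishes, using $\widetilde{\bf (H.3)}$--$\widetilde{\bf (H.4)}$ through the $T$-coercivity construction of Lemma \ref{weak-coercive-a-D-direct} (those hypotheses being what guarantees that $T(\bsi,\bva):=(\bsi,\bva+\delta\,c(\bsi)\,\widetilde\bps)$ leaves $\widetilde\V_{D,h}$ invariant).

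With these ingredients in place the rest is routine. Since $\widetilde\V_{D,h}$ is finite dimensional, the inf--sup estimate \eqref{eq-weak-coercive-a-D-direct-discrete} already says that the operator $\widetilde\V_{D,h}\to\widetilde\V_{D,h}$ induced by $\a_D$ is injective, hence bijective, which automatically yields the companion nondegeneracy condition $\sup_{(\bsi_h,\bva_h)\in\widetilde\V_{D,h}}\a_D((\bsi_h,\bva_h),(\bta,\bps))>0$ for every $(\bta,\bps)\in\widetilde\V_{D,h}\setminus\{\0\}$. All constants involved ($\|\a_D\|$, $\|\b_D\|$, $\widehat\alpha$, $\tilde\beta$) are independent of $h$, so existence, uniqueness and the stability bound follow from the discrete Babu\v ska--Brezzi theory together with the above bounds on $\F_D$ and $\G_D$, and \eqref{convergence-CH-NH-D} follows as the corresponding quasi-optimality (C\'ea-type) estimate for conforming mixed approximations under the very same hypotheses. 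In effect no real obstacle remains at this stage: the only substantive step, the discrete weak-coerciveness of $\a_D$, was already settled in Lemma \ref{weak-coercive-a-D-direct-discrete}, and --- in contrast with the J \& N scheme of Section \ref{section41} --- the C \& H coupling requires here neither strong symmetry of the discrete tensors nor any relation between the mesh sizes.
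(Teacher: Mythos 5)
Your proposal is correct and follows essentially the same route as the paper, which states the theorem as an immediate consequence of the discrete Babu\v ska--Brezzi theory once $\widetilde{\bf (H.1)}$--$\widetilde{\bf (H.2)}$ give the uniform discrete inf-sup for $\b_D$ and the kernel \eqref{tilde-V-D-h}, and Lemma \ref{weak-coercive-a-D-direct-discrete} (via the T-coercivity of Lemma \ref{weak-coercive-a-D-direct} together with $\widetilde{\bf (H.3)}$--$\widetilde{\bf (H.4)}$) gives the uniform discrete weak-coerciveness of $\a_D$ on $\widetilde\V_{D,h}$. Your additional observations (conformity of the subspaces, independence of $\b_D$ from the $\bps$-component, and the finite-dimensional argument supplying the transposed nondegeneracy condition with an $h$-independent constant) are accurate fillings-in of details the paper leaves implicit.
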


\subsection{C \& H coupling with non-homogeneous Neumann on $\Gamma_0$}\label{section63}
We let $\mathbb H^\bsi_h$, $\mathbf H^\bva_h$, $\mathbf L^\u_h$, $\mathbb
L^\bch_h$, and $\mathbf H^\lambda_{\tilde h}$ be finite dimensional subspaces of $\mathbb H(\bdiv;\Omega^-)$, $\mathbf
H^{1/2}(\Gamma)$, $\mathbf L^2(\Omega^-)$, $\mathbb L^2_{\tt skew}(\Omega^-)$, and $\mathbf H^{1/2}(\Gamma_0)$,
respectively, and define, as in \eqref{eq-62-0}, the subspace 
\begin{equation}\label{eq-63-0}
\mathbf H^\bva_{h,0} \,:=\, \mathbf H^\bva_{h}\cap\mathbf H^{1/2}_0(\Gamma)\,.
\end{equation}
Note that, in order to be able to define specific discrete subspaces satisfying the assumptions to be 
specified below, we need to take a different meshsize for the finite element subspace of $\mathbf H^{1/2}(\Gamma_0)$. 
However, as in the previous section, this is not required for $\mathbf H^\bva_{h,0}$. Then we introduce the product spaces
\[
\widetilde \X_{N,h}\,:=\, \mathbb H^\bsi_h \,\times\,\mathbf H^\bva_{h,0}
\qan \Y_{N,h,\tilde h}\,:=\,\mathbf L^\u_h \,\times\, \mathbb L^\bch_h \times \mathbf H^\lambda_{\tilde h}\,,
\]
and define the Galerkin scheme associated with \eqref{dual-mixed-formulation-Neumann-complete-new}  as:
Find $((\bsi_h,\bva_h),(\u_h,\bch_h,\bla_{\tilde h}))\,\in\,
{\widetilde{\X}_{N,h}}\times{\Y_{N,h,\tilde h}}$ such that
\begin{equation}\label{dual-mixed-formulation-Neumann-complete-new-h}
\begin{array}{rcl}
\mathbf a_N((\bsi_h,\bva_),(\bta,\bps))\,+\,\mathbf b_N((\bta,\bps),(\u_h,\bch_h,\bla_{\tilde h})) &=& \F_N(\bta,\bps)
\qquad\forall\,(\bta,\bps)\,\in\, {\widetilde{\X}_{N,h}}\,,\\[2ex]
\mathbf b_N((\bsi_h,\bva_h),(\bv,\bet,\bxi)) &=& \G_N(\bv,\bet,\bxi)
\qquad\forall\,(\bv,\bet,\bxi)\,\in\, {\Y_{N,h,\tilde h}}\,,
\end{array}
\end{equation}
where $\a_N$, $\b_N$, $\F_N$, and $\G_N$ are those bilinear forms and functionals defined in 
Section \ref{section333}.

\medskip
Proceeding as before, in what follows we apply the discrete Babu\v ska-Brezzi theory to establish the well-posedness and 
convergence of \eqref{dual-mixed-formulation-Neumann-complete-new-h}. For this purpose,
we now assume the following hypotheses:

\medskip\noindent
$\widehat{\bf (H.1)}$ the bilinear form $\mathbf b_N$ satisfies the discrete inf-sup condition
uniformly on $\widetilde \X_{N,h} \,\times \Y_{N,h,\tilde h}$, that is there exists $\tilde \beta > 0$,
independent of $h$ and $\tilde h$, such that 
\begin{equation}\label{discrete-inf-sup-b-N-CH}
\sup_{(\bta_h,\bps_h)\in\widetilde\X_{N,h} \backslash \{\0\}}
\frac{\mathbf b_N((\bta_h,\bps_h),(\bv,\bet,\bxi))}{\|(\bta_h,\bps_h)\|_{\X_N}}\,\ge\,
\tilde\beta\,\|(\bv,\bet,\bxi)\|_{\Y_N}
\qquad\forall \,(\bv,\bet,\bxi)\in \Y_{N,h,\tilde h}\,.
\end{equation}

\medskip\noindent
$\widehat{\bf (H.2)}$ $\bdiv\,\mathbb H^\bsi_h\,\subseteq\,\mathbf L^\u_h$.

\medskip\noindent
$\widehat{\bf (H.3)}$ there holds $\,P_0(\Omega^-)\,\I \,\subseteq\, \mathbb H^\bsi_h$.

\medskip\noindent
$\widehat{\bf (H.4)}$ there exists $\,\widetilde\bxi \in \mathbf H^{1/2}(\Gamma_0)$ such that
$\,\langle \bnu,\widetilde\bxi\rangle_{\Gamma_0} \,>\,0$ \,and\, $\widetilde\bxi\in \mathbf H^\lambda_{\tilde h}$
\,for each $h > 0$.

\medskip
Note, as in both previous sections, that the bilinear form $\mathbf b_N$ is also independent of the $\bps$-component,
and hence its discrete inf-sup condition (cf. \eqref{discrete-inf-sup-b-N-CH} in
$\widehat{\bf (H.1)}$) involves only the subspaces $\mathbb H^\bsi_{h}$, $\mathbf L^\u_h$, and 
$\mathbb L^\bch_h$. In addition, hypothesis $\widehat{\bf (H.2)}$ implies now 
that the discrete kernel of $\mathbf b_N$ reduces to
\begin{equation}\label{tilde-V-N-h-CH}
\begin{array}{c}
\disp
\widetilde\V_{N,h} := \Bigg\{(\bta_h,\bps_h)\in\widetilde{\X}_{N,h}: \quad
\bdiv \bta_h = \0 \qin \Omega^-\,, \quad \int_{\Omega^-} \bta_h : \bet_h = 0 
\quad \forall\,\bet_h\,\in\,\mathbb L^\bch_h\,, \\[2ex]
\disp
\qan \qquad \langle \gamma^-_\bnu(\bta_h),\bxi\rangle_{\Gamma_0}\,=\,0 \quad \forall\,\bxi\in \mathbf H^\lambda_{\tilde h}
\Bigg\}\,.
\end{array}
\end{equation}
In turn, $\widehat{\bf (H.3)}$ guarantees that there holds the decomposition (cf. \eqref{tilde-hbdiv} - \eqref{decomp-hdiv-1})
\[
\mathbb H^\bsi_h\,=\, \widetilde{\mathbb H}^\bsi_h\,\oplus\,P_0(\Omega^-)\,\I\,,
\]
where
\[
\widetilde{\mathbb H}^\bsi_h \,:=\,
\Big\{\,\bta_h\,\in\,\mathbb H^\bsi_h:
\quad \int_{\Omega^-} \tr \bta_h \,=\, 0\,\Big\}\,.
\]
In other words, $\bta_h-\boldsymbol\pi_{_I}\bta_h \in \widetilde{\mathbb H}^\bsi_h$ for each $\bta_h \in \mathbb H_h^\bsi$. Hence, the discrete analogue of Lemma \ref{otra-cota-tau} is established now as a consequence of $\widehat{\bf (H.4)}$.
\begin{lemma}\label{otra-cota-tau-discreto}
Let $\mathbb H_{h,\tilde h}\,:=\,\Big\{\,\bta_h\in \mathbb H^\bsi_h: \quad \langle\gamma^-_\bnu(\bta_h),\bxi\rangle_{\Gamma_0}\,=\,0
\quad\forall\,\bxi \,\in\,\mathbf H^\lambda_{\tilde h}\,\Big\}$. Then there exists $\widetilde c_2 > 0$, independent of $h$ and $\tilde h$, such that
\begin{equation}\label{eq-otra-cota-tau-discreto}
\|\bta_h\|^2_{\bdiv;\Omega^-}\,\,\le\,\,\widetilde c_2\,\|\bta_{h}-\boldsymbol\pi_{_I}\bta_h\|^2_{\bdiv;\Omega^-}
\qquad\forall\,\bta_h \,\in\,\mathbb H_{h,\tilde h}\,.
\end{equation}
\end{lemma}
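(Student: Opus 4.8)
The plan is to mimic the structure of the proof of Lemma~\ref{otra-cota-tau}, but to replace the use of the continuous trace condition $\gamma^-_\bnu(\bta)=\0$ on $\Gamma_0$ by the discrete orthogonality $\langle\gamma^-_\bnu(\bta_h),\bxi\rangle_{\Gamma_0}=0$ for all $\bxi\in\mathbf H^\lambda_{\tilde h}$. The key observation is that $\widehat{\bf (H.4)}$ supplies a single fixed test function $\widetilde\bxi$ with $\langle\bnu,\widetilde\bxi\rangle_{\Gamma_0}>0$ that lies in every $\mathbf H^\lambda_{\tilde h}$; this is exactly what is needed to control the constant component $\boldsymbol\pi_{_I}\bta_h = c(\bta_h)\,\I$, since $\gamma^-_\bnu(c(\bta_h)\I)=c(\bta_h)\bnu$ on $\Gamma_0$.

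First I would write $\bta_h = (\bta_h-\boldsymbol\pi_{_I}\bta_h) + c(\bta_h)\,\I$, where $c(\bta_h):=\frac1{|\Omega^-|}\int_{\Omega^-}\tr\bta_h$, noting that $\bta_h-\boldsymbol\pi_{_I}\bta_h\in\widetilde{\mathbb H}^\bsi_h$ by $\widehat{\bf (H.3)}$. Testing the discrete orthogonality defining $\mathbb H_{h,\tilde h}$ against $\widetilde\bxi\in\mathbf H^\lambda_{\tilde h}$ gives
\[
0=\langle\gamma^-_\bnu(\bta_h),\widetilde\bxi\rangle_{\Gamma_0}
 =\langle\gamma^-_\bnu(\bta_h-\boldsymbol\pi_{_I}\bta_h),\widetilde\bxi\rangle_{\Gamma_0}
 + c(\bta_h)\,\langle\bnu,\widetilde\bxi\rangle_{\Gamma_0},
\]
whence, using the boundedness of the normal trace operator $\gamma^-_\bnu:\mathbb H(\bdiv;\Omega^-)\to\mathbf H^{-1/2}(\partial\Omega^-)$ and of the restriction to $\Gamma_0$,
\[
|c(\bta_h)|\,\le\,\frac{\|\widetilde\bxi\|_{1/2,\Gamma_0}}{\langle\bnu,\widetilde\bxi\rangle_{\Gamma_0}}\,
 \|\gamma^-_\bnu(\bta_h-\boldsymbol\pi_{_I}\bta_h)\|_{-1/2,\Gamma_0}
 \,\le\,C\,\|\bta_h-\boldsymbol\pi_{_I}\bta_h\|_{\bdiv;\Omega^-},
\]
with $C$ depending only on $\widetilde\bxi$ and $\Omega^-$, and in particular \emph{independent of $h$ and $\tilde h$}. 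Since $\|\I\|_{\bdiv;\Omega^-}=|\Omega^-|^{1/2}$, this yields $\|\boldsymbol\pi_{_I}\bta_h\|_{\bdiv;\Omega^-}=|c(\bta_h)|\,|\Omega^-|^{1/2}\le C\|\bta_h-\boldsymbol\pi_{_I}\bta_h\|_{\bdiv;\Omega^-}$.

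Finally I would conclude by the triangle inequality,
\[
\|\bta_h\|_{\bdiv;\Omega^-}\,\le\,\|\bta_h-\boldsymbol\pi_{_I}\bta_h\|_{\bdiv;\Omega^-}
 +\|\boldsymbol\pi_{_I}\bta_h\|_{\bdiv;\Omega^-}
 \,\le\,(1+C)\,\|\bta_h-\boldsymbol\pi_{_I}\bta_h\|_{\bdiv;\Omega^-},
\]
which is the assertion with $\widetilde c_2=(1+C)^2$. The main (and essentially only) obstacle is checking that the constant produced this way does not degenerate as $\tilde h\to0$: this is guaranteed precisely because $\widetilde\bxi$ is a \emph{single fixed} function belonging to all the subspaces $\mathbf H^\lambda_{\tilde h}$ (hypothesis $\widehat{\bf (H.4)}$), so the ratio $\|\widetilde\bxi\|_{1/2,\Gamma_0}/\langle\bnu,\widetilde\bxi\rangle_{\Gamma_0}$ is a genuine constant. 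No contradiction argument and no mesh-dependent norm are needed; everything is carried out in the natural norm of $\mathbb H(\bdiv;\Omega^-)$, which is the point the authors emphasize for the C \& H coupling.
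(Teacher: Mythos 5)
Your proposal is correct and follows essentially the same route as the paper's proof: test the discrete orthogonality against the single fixed $\widetilde\bxi$ supplied by $\widehat{\bf (H.4)}$ to bound the constant component, obtaining $|c(\bta_h)|\le C\,\|\bta_h-\boldsymbol\pi_{_I}\bta_h\|_{\bdiv;\Omega^-}$ with $C$ independent of $h$ and $\tilde h$, and then recombine (the paper concludes via the orthogonality identity $\|\bta_h\|^2_{\bdiv;\Omega^-}=\|\bta_h-\boldsymbol\pi_{_I}\bta_h\|^2_{\bdiv;\Omega^-}+c\,|c(\bta_h)|^2\,|\Omega^-|$ instead of your triangle inequality, which is immaterial). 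The only cosmetic slip is $\|\I\|_{\bdiv;\Omega^-}=(3\,|\Omega^-|)^{1/2}$ rather than $|\Omega^-|^{1/2}$, which merely changes the generic constant.
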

\begin{proof}
Let $\bta_h\in \mathbb H_{h,\tilde h}$ and write $\boldsymbol\pi_{_I}\bta_h=d_h \I$. Then
\[
0\,=\,\langle \gamma^-_\bnu(\bta_h),\widetilde\bxi\rangle_{\Gamma_0}\,=\,\langle \gamma^-_\bnu(\bta_{h}-\boldsymbol\pi_{_I}\bta_h),\widetilde\bxi\rangle_{\Gamma_0} \,+\, d_h\,\langle \bnu,\widetilde\bxi\rangle_{\Gamma_0}\,,
\]
which gives
\[
|d_h|\,\,\le\,\,C\,\frac{\|\widetilde\bxi\|_{1/2,\Gamma_0}}{|\,\langle \bnu,\widetilde\bxi\rangle_{\Gamma_0}\,|}
\,\,\|\bta_{h}-\boldsymbol\pi_{_I}\bta_h\|_{\bdiv;\Omega^-}\,.
\]
This inequality and the fact that $\,\|\bta_h\|^2_{\bdiv;\Omega^-}\,=\,\|\bta_h-\boldsymbol\pi_{_I}
\bta_h\|^2_{\bdiv;\Omega^-}
\,+\,2\,d^2_h\,|\Omega^-|\,$ imply \eqref{eq-otra-cota-tau-discreto}.
\end{proof}

\medskip
In this way, noting that $\widetilde\V_{N,h} \,\subseteq\, \mathbb H_{h,\tilde h} \times \mathbf H^\bva_{h,0}$,
and using now Lemma \ref{otra-cota-tau-discreto} (instead of Lemma \ref{otra-cota-tau}) together with
\eqref{eq-cota-tau} (cf. Lemma \ref{cota-tau}), \eqref{W-eliptico-equiv}, and \eqref{V-eliptico-equiv}, 
we conclude the strong coerciveness of $\a_N$ on $\widetilde\V_{N,h}$. Therefore, we summarize the foregoing 
analysis in the following main result.
\begin{theorem}\label{CH-coupling-NH-N}
Assume that the hypotheses $\widehat{\bf (H.1)}$ up to $\widehat{\bf (H.4)}$ are satisfied. Then, 
there exists a unique $((\bsi_h,\bva_h),(\u_h,\bch_h,\bla_{\tilde h}))$
$\,\in\,{\widetilde{\X}_{N,h}}\times{\Y_{N,h,\tilde h}}$ solution of \eqref{dual-mixed-formulation-Neumann-complete-new-h}.
In addition, there exist $C_1, \,C_2 \,>\,0$, independent of $h$, such that
\[
\|((\bsi_h,\bva_h),(\u_h,\bch_h,\bla_{\tilde h}))\|_{\X_N \times \Y_N}\,\le\,C_1\,
\Big\{ \|\g_{_N}\|_{-1/2,\Gamma_0} \,+\, \|\mathbf f\|_{0,\Omega^-}\Big\}\,,
\]
and
\begin{equation}\label{convergence-CH-NH-N}
\begin{array}{l}
\disp
\|((\bsi,\bva),(\u,\bch,\bla))\,-\,((\bsi_h,\bva_h),(\u_h,\bch_h,\bla_{\tilde h}))\|_{\X_N \times \Y_N}\\[2ex]
\disp\qquad
\,\le\,C_2\,\inf_{((\bta_h,\bps_h),(\bv_h,\bet_h,\bxi_{\tilde h}))\in {\widetilde{\X}_{N,h}}\times{\Y_{N,h,\tilde h}}}
\|((\bsi,\bva),(\u,\bch,\bla))\,-\,((\bta_h,\bps_h),(\bv_h,\bet_h,\bxi_{\tilde h}))\|_{\X_N \times \Y_N}\,.
\end{array}
\end{equation}
\end{theorem}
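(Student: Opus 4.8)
The plan is to verify, uniformly in the discretization parameters $h$ and $\tilde h$, the three hypotheses of the discrete Babu\v ska--Brezzi theory (cf. \cite[Theorem 1.1, Chapter II]{bf-1991}) for the pair $(\a_N,\b_N)$ on ${\widetilde{\X}_{N,h}}\times{\Y_{N,h,\tilde h}}$, and then to read off both the stability bound and the C\'ea estimate. Boundedness of $\a_N$ and $\b_N$ on the subspaces is inherited, with constants independent of $h$ and $\tilde h$, from the continuous bounded bilinear forms of Section \ref{section333}. The uniform discrete inf--sup condition for $\b_N$ is exactly hypothesis $\widehat{\bf (H.1)}$; since $\b_N$ does not involve the $\bps$-component, this condition concerns only $\mathbb H^\bsi_h$, $\mathbf L^\u_h$, $\mathbb L^\bch_h$, and $\mathbf H^\lambda_{\tilde h}$.

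The core of the argument is the uniform strong coerciveness of $\a_N$ on the discrete kernel $\widetilde\V_{N,h}$ of \eqref{tilde-V-N-h-CH}. First I would reproduce the identity from the proof of Lemma \ref{quasi-coerciveness-a-N}: for $(\bta_h,\bps_h)\in\widetilde\V_{N,h}$, using only that $\K^{\tt t}$ is the adjoint of $\K$ (so that, in contrast with the J\&N analysis of Section \ref{section41}, no strong symmetry of $\bta_h$ is actually needed),
\[
\a_N((\bta_h,\bps_h),(\bta_h,\bps_h))\,=\,\|\bta_h^{\tt d}\|^2_{0,\Omega^-}\,+\,\langle\W\,\bps_h,\bps_h\rangle_\Gamma\,+\,\langle\gamma^-_\bnu(\bta_h),\V\,\gamma^-_\bnu(\bta_h)\rangle_\Gamma\,.
\]
Then I would bound each term from below: since $\bps_h\in\mathbf H^{1/2}_0(\Gamma)$ one has $\boldsymbol\pi_{_{RM}}\bps_h=\0$, so \eqref{W-eliptico-equiv} yields $\langle\W\,\bps_h,\bps_h\rangle_\Gamma\ge\tilde\alpha_2\,\|\bps_h\|^2_{1/2,\Gamma}$; \eqref{eq-cota-tau}, together with $\bdiv\,\bta_h=\0$, gives $\|\bta_h^{\tt d}\|^2_{0,\Omega^-}\ge c_1\,\|\bta_h-\boldsymbol\pi_{_I}\bta_h\|^2_{\bdiv;\Omega^-}$; and \eqref{V-eliptico-equiv} makes the $\V$-term nonnegative. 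The remaining point is to recover the full norm $\|\bta_h\|^2_{\bdiv;\Omega^-}$ from $\|\bta_h-\boldsymbol\pi_{_I}\bta_h\|^2_{\bdiv;\Omega^-}$; this is where Lemma \ref{otra-cota-tau-discreto} replaces the continuous Lemma \ref{otra-cota-tau}, its hypotheses $\widehat{\bf (H.3)}$ and $\widehat{\bf (H.4)}$ being available and $\widetilde\V_{N,h}\subseteq\mathbb H_{h,\tilde h}\times\mathbf H^\bva_{h,0}$. Collecting these estimates produces $\a_N((\bta_h,\bps_h),(\bta_h,\bps_h))\ge\widetilde\alpha\,\|(\bta_h,\bps_h)\|^2_{\X_N}$ with $\widetilde\alpha>0$ independent of $h$ and $\tilde h$.

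With these ingredients, the discrete Babu\v ska--Brezzi theory gives existence, uniqueness, and the stability bound $\|((\bsi_h,\bva_h),(\u_h,\bch_h,\bla_{\tilde h}))\|_{\X_N\times\Y_N}\le C_1\{\|\g_{_N}\|_{-1/2,\Gamma_0}+\|\f\|_{0,\Omega^-}\}$, with $C_1$ depending only on the stability constants (and on $\widetilde\bxi$ through Lemma \ref{otra-cota-tau-discreto}). Finally, because the discrete spaces are conforming ($\widetilde\X_{N,h}\subseteq\widetilde\X_N$ and $\Y_{N,h,\tilde h}\subseteq\Y_N$) and the continuous problem \eqref{dual-mixed-formulation-Neumann-complete-new} is well posed by Theorem \ref{CH-nhNbc}, the quasi-optimal error estimate \eqref{convergence-CH-NH-N} follows from the standard C\'ea-type bound for mixed finite element methods.

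I expect the main obstacle to be precisely the discrete kernel coerciveness, and within it the degeneracy of the single-layer term: since $\ker(\V)=P_0(\Gamma)\,\bnu$, the $\V$-contribution gives no control on the constant part $\boldsymbol\pi_{_I}\bta_h$, and in the discrete setting the Neumann condition on $\Gamma_0$ holds only weakly against $\mathbf H^\lambda_{\tilde h}$, so the continuous Lemma \ref{otra-cota-tau} is unavailable. Supplying the auxiliary datum $\widetilde\bxi$ with $\langle\bnu,\widetilde\bxi\rangle_{\Gamma_0}>0$ (hypothesis $\widehat{\bf (H.4)}$) is exactly what makes the discrete replacement Lemma \ref{otra-cota-tau-discreto}, and hence the whole argument, go through.
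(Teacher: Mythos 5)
Your proposal is correct and follows essentially the same route as the paper: the uniform discrete inf--sup condition is taken directly from $\widehat{\bf (H.1)}$, and the key step, the uniform coerciveness of $\a_N$ on the discrete kernel \eqref{tilde-V-N-h-CH}, is obtained exactly as the paper does, from the identity of Lemma \ref{quasi-coerciveness-a-N} together with \eqref{eq-cota-tau}, \eqref{W-eliptico-equiv}, \eqref{V-eliptico-equiv}, and the discrete substitute Lemma \ref{otra-cota-tau-discreto} (resting on $\widehat{\bf (H.3)}$ and $\widehat{\bf (H.4)}$, with $\widetilde\V_{N,h}\subseteq\mathbb H_{h,\tilde h}\times\mathbf H^\bva_{h,0}$), before invoking the discrete Babu\v ska--Brezzi theory for existence, uniqueness, stability, and the C\'ea estimate. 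Your diagnosis of the degeneracy of $\V$ on $P_0(\Gamma)\,\bnu$ and of the weak imposition of the Neumann condition as the reason Lemma \ref{otra-cota-tau} must be replaced is precisely the paper's motivation for $\widehat{\bf (H.4)}$.
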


\subsection{C \& H coupling with homogeneous Neumann on $\Gamma_0$}\label{section64}
We now let $\mathbb H^\bsi_{h,0}$, $\mathbf H^\bva_h$, $\mathbf L^\u_h$, and $\mathbb
L^\bch_h$ be finite dimensional subspaces of $\mathbb H_0(\bdiv;\Omega^-)$, $\mathbf
H^{1/2}(\Gamma)$, $\mathbf L^2(\Omega^-)$, and $\mathbb L^2_{\tt skew}(\Omega^-)$,
respectively, and define, as in previous occasions,
\begin{equation}\label{eq-64-0}
\mathbf H^\bva_{h,0}=\mathbf H^\bva_{h}\cap\mathbf H^{1/2}_0(\Gamma)\,.
\end{equation}
Then we introduce the product spaces
\[
\widetilde \X_{N,h}\,:=\, \mathbb H^\bsi_{h,0}\,\times\,\mathbf H^\bva_{h,0}
\qan \Y_{N,h}\,:=\,\mathbf L^\u_h \,\times\, \mathbb L^\bch_h \,,
\]
and define the Galerkin scheme associated with \eqref{dual-mixed-formulation-Neumann-complete-0-new}  as:
Find $((\bsi_h,\bva_h),(\u,\bch))\,\in\,{\widetilde\X_{N,h}}\times{\Y_{N,h}}$ such that
\begin{equation}\label{dual-mixed-formulation-Neumann-complete-0-new-h}
\begin{array}{rcl}
\mathbf a_N((\bsi_h,\bva_h),(\bta,\bps))\,+\,\mathbf b_N((\bta,\bps),(\u_h,\bch_h)) &=& \F_N(\bta,\bps)
\qquad\forall\,(\bta,\bps)\in \widetilde\X_{N,h}\,,\\[2ex]
\mathbf b_N((\bsi_h,\bva_h),(\bv,\bet)) &=& \G_N(\bv,\bet)
\qquad\forall\,(\bv,\bet)\in \Y_{N,h}\,,
\end{array}
\end{equation}
where $\a_N$, $\b_N$, $\F_N$, and $\G_N$ are those bilinear forms and functionals defined in 
Section \ref{section334}.

\medskip
We assume the following hypotheses:

\medskip\noindent
$\overline{\bf (H.1)}$ the bilinear form $\mathbf b_N$ satisfies the discrete inf-sup condition
uniformly on $\widetilde \X_{N,h} \,\times \Y_{N,h}$, that is there exists $\tilde \beta > 0$,
independent of $h$, such that 
\begin{equation}\label{discrete-inf-sup-b-N-0}
\sup_{(\bta_h,\bps_h)\in\widetilde\X_{N,h} \backslash \{\0\}}
\frac{\mathbf b_N((\bta_h,\bps_h),(\bv,\bet))}{\|(\bta_h,\bps_h)\|_{\X_N}}\,\ge\,
\tilde\beta\,\|(\bv,\bet)\|_{\Y_N}
\qquad\forall \,(\bv,\bet)\in \Y_{N,h}\,.
\end{equation}

\medskip\noindent
$\overline{\bf (H.2)}$ $\bdiv\,\mathbb H^\bsi_{h,0}\,\subseteq\,\mathbf L^\u_h$.

\medskip\noindent
$\overline{\bf (H.3)}$ there holds $\mathbf{RM}(\Gamma) \,\subseteq\, \mathbf H^\bva_h$.

\medskip
As in all the previous subsections, we note once again that the bilinear form $\b_N$
does not depend on the $\bps$-component, and hence the eventual verification of its
discrete inf-sup condition (cf. \eqref{discrete-inf-sup-b-N-0} in $\overline{\bf (H.1)}$) is
determined only by the subspaces $\mathbb H^\bsi_{h,0}$, $\mathbf L^\u_h$, and $L^\bch_h$.
In turn, it is also quite obvious from $\overline{\bf (H.2)}$ that the discrete kernel of $\mathbf b_N$ 
reduces to
\begin{equation}\label{tilde-V-N-h-CH-0}
\widetilde\V_{N,h} := \Bigg\{(\bta_h,\bps_h)\in\widetilde{\X}_{N,h}: \quad
\bdiv \bta_h = \0 \qin \Omega^-\,, \quad \int_{\Omega^-} \bta_h : \bet_h = 0 
\quad \forall\,\bet_h\,\in\,\mathbb L^\bch_h \Bigg\}\,.
\end{equation}

\medskip
Furthermore, it is clear that $\overline{\bf (H.3)}$ yields the decomposition
$\mathbf H^\bva_h \,=\,\mathbf H^\bva_{h,0} \,\oplus\, \mathbf{RM}(\Gamma)$, and
$\bps_h-\boldsymbol\pi_{_{RM}}\bps_h\in \H_{h,0}^\bva$ for all $\bps_h \in \H_h^\bva$.
Consequently,
\eqref{tilde-V-N-h-CH-0} and the application of \eqref{quasi-coerciveness-a-N-new}
to the above discrete context confirm that the bilinear form $\a_N$ is strongly
coercive on $\widetilde\V_{N,h}$.

\medskip
The well-posedness and convergence of \eqref{dual-mixed-formulation-Neumann-complete-0-new-h} 
is established next as a straightforward consequence of the foregoing analysis.
\begin{theorem}\label{CH-coupling-NH-N-0-H}
Assume that the hypotheses $\overline{\bf (H.1)}$ up to $\overline{\bf (H.3)}$ are satisfied. Then, 
there exists a unique $((\bsi_h,\bva_h),(\u_h,\bch_h))$
$\,\in\,{\widetilde{\X}_{N,h}}\times{\Y_{N,h}}$ solution of \eqref{dual-mixed-formulation-Neumann-complete-0-new-h}.
In addition, there exist $C_1, \,C_2 \,>\,0$, independent of $h$, such that
\[
\|((\bsi_h,\bva_h),(\u_h,\bch_h))\|_{\X_N \times \Y_N}\,\le\,C_1\, \|\mathbf f\|_{0,\Omega^-} \,,
\]
and
\begin{equation}\label{convergence-CH-NH-N-0}
\begin{array}{l}
\disp
\|((\bsi,\bva),(\u,\bch))\,-\,((\bsi_h,\bva_h),(\u_h,\bch_h))\|_{\X_N \times \Y_N}\\[2ex]
\disp\qquad
\,\le\,C_2\,\inf_{((\bta_h,\bps_h),(\bv_h,\bet_h))\in {\widetilde{\X}_{N,h}}\times{\Y_{N,h}}}
\|((\bsi,\bva),(\u,\bch))\,-\,((\bta_h,\bps_h),(\bv_h,\bet_h))\|_{\X_N \times \Y_N}\,.
\end{array}
\end{equation}
\end{theorem}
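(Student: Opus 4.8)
The plan is to cast \eqref{dual-mixed-formulation-Neumann-complete-0-new-h} as a conforming Galerkin approximation of a well-posed saddle-point problem and then quote the discrete Babu\v ska--Brezzi theory (the Galerkin counterpart of \cite[Theorem 1.1, Chapter II]{bf-1991}), checking that its three hypotheses hold with constants independent of $h$. First I would note that $\a_N$ and $\b_N$ are bounded --- these are literally the forms \eqref{a-N-complete-N-0}--\eqref{b-N-complete-N-0} already used in Section \ref{section334}, whose continuity rests on the mapping properties of $\V,\K,\K^{\tt t},\W$ from Lemma \ref{properties-2} together with the trace theorems --- and that the discrete inf--sup condition for $\b_N$ on $\widetilde\X_{N,h}\times\Y_{N,h}$ is exactly hypothesis $\overline{\bf (H.1)}$; since $\b_N$ ignores the $\bps$-component, this condition only constrains $\mathbb H^\bsi_{h,0}$, $\mathbf L^\u_h$ and $\mathbb L^\bch_h$.

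The substantive step is the $h$-uniform coercivity of $\a_N$ on the discrete kernel $\widetilde\V_{N,h}$, which is already carried out in the discussion preceding the theorem; here I would simply reassemble it. By $\overline{\bf (H.2)}$ the kernel is the set \eqref{tilde-V-N-h-CH-0}, so each $(\bta_h,\bps_h)\in\widetilde\V_{N,h}$ has $\bta_h=\bta_h^{\tt t}$, $\bdiv\,\bta_h=\0$ in $\Omega^-$, $\bta_h\in\mathbb H^\bsi_{h,0}\subseteq\mathbb H_0(\bdiv;\Omega^-)$ and, using $\overline{\bf (H.3)}$ together with the definition \eqref{eq-64-0}, $\bps_h\in\mathbf H^\bva_{h,0}\subseteq\mathbf H^{1/2}_0(\Gamma)$. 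Hence $\widetilde\V_{N,h}$ sits inside the continuous kernel $\widetilde\V_N$ of Section \ref{section334}, the positiveness estimate \eqref{quasi-coerciveness-a-N-new} applies verbatim, and because $\boldsymbol\pi_{_{RM}}\bps_h=\0$ it reduces to
\[
\a_N\big((\bta_h,\bps_h),(\bta_h,\bps_h)\big)\,\ge\,\widetilde\alpha\,\|(\bta_h,\bps_h)\|^2_{\X_N}
\qquad\forall\,(\bta_h,\bps_h)\in\widetilde\V_{N,h},
\]
with the \emph{same} $\widetilde\alpha>0$ as in the continuous case. I would stress the contrast with the J \& N scheme of Section \ref{section41} (cf.\ Lemma \ref{a-N-coercive-discrete}): no relation between the mesh sizes is needed, because the coercivity is generated by the terms $\|\bta^{\tt d}\|^2_{0,\Omega^-}$, $\langle\gamma^-_\bnu(\bta),\V\gamma^-_\bnu(\bta)\rangle_\Gamma$ and $\langle\W\bps,\bps\rangle_\Gamma$ --- controlled through Lemmas \ref{cota-tau}, \ref{otra-cota-tau} and the coercivity estimates \eqref{W-eliptico-equiv}, \eqref{V-eliptico-equiv} --- and never through the identity replacing $\nabla(\D\bps)$ by $\e(\D\bps)$, so the lack of strong symmetry of the tensors in $\mathbb H^\bsi_{h,0}$ is irrelevant.

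With the three hypotheses in place, the discrete Babu\v ska--Brezzi theorem delivers the unique solution $((\bsi_h,\bva_h),(\u_h,\bch_h))\in\widetilde\X_{N,h}\times\Y_{N,h}$ and the bound $\|((\bsi_h,\bva_h),(\u_h,\bch_h))\|_{\X_N\times\Y_N}\le C_1\|\f\|_{0,\Omega^-}$, with $C_1$ depending only on $\widetilde\alpha$, the discrete inf--sup constant of $\overline{\bf (H.1)}$ and the norms of $\a_N,\b_N$ --- all $h$-independent. For the error bound \eqref{convergence-CH-NH-N-0} I would invoke the usual quasi-optimality (C\'ea/Strang) estimate for conforming Galerkin methods applied to a well-posed mixed problem: this is legitimate since $\mathbb H^\bsi_{h,0}\subseteq\mathbb H_0(\bdiv;\Omega^-)$ and $\mathbf H^\bva_{h,0}\subseteq\mathbf H^{1/2}_0(\Gamma)$ give $\widetilde\X_{N,h}\times\Y_{N,h}\subseteq\widetilde\X_N\times\Y_N$, the space in which the exact solution of Theorem \ref{CH-hNbc} lives, so the scheme is conforming and $C_2$ again depends only on those stability constants. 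I do not expect a real obstacle here; the only delicate points are bookkeeping --- confirming the conforming inclusions and verifying that the continuous coercivity constant passes to the discrete kernel unchanged, i.e.\ that the mesh restriction needed in Section \ref{section41} is genuinely superfluous in this C \& H setting.
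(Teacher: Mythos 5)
Your overall architecture is the paper's: boundedness of $\a_N$ and $\b_N$, the discrete inf-sup from $\overline{\bf (H.1)}$, the kernel characterization \eqref{tilde-V-N-h-CH-0} from $\overline{\bf (H.2)}$, coercivity of $\a_N$ on the discrete kernel, and then the discrete Babu\v ska--Brezzi theory with the usual C\'ea bound. However, the step on which everything hinges is justified incorrectly. You claim that each $(\bta_h,\bps_h)\in\widetilde\V_{N,h}$ satisfies $\bta_h=\bta_h^{\tt t}$ and hence that $\widetilde\V_{N,h}$ is contained in the continuous kernel $\widetilde\V_N$, so that \eqref{quasi-coerciveness-a-N-new} ``applies verbatim.'' This is false: the second condition in \eqref{tilde-V-N-h-CH-0} only says $\int_{\Omega^-}\bta_h:\bet_h=0$ for all $\bet_h$ in the \emph{finite-dimensional} space $\mathbb L^\bch_h$, i.e. discrete (weak) symmetry, which does not imply strong symmetry of $\bta_h$; in general $\widetilde\V_{N,h}\not\subseteq\widetilde\V_N$. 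This is precisely the obstruction the paper discusses at length after the continuous J \& N analysis (no known subspaces force symmetry of the discrete kernel), and it is the reason the J \& N Galerkin scheme of Section \ref{section41} needs the mesh restriction $h\le c_0\tilde h$. Since \eqref{quasi-coerciveness-a-N-new} is stated only for pairs with $\bta=\bta^{\tt t}$, your written argument does not establish the discrete coercivity.

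The gap is repairable, and the repair is exactly the paper's point about the C \& H coupling: the \emph{proof} of Lemma \ref{quasi-coerciveness-a-N} never uses the symmetry of $\bta$. Because $\K^{\tt t}$ is the adjoint of $\K$, the off-diagonal terms cancel and $\a_N((\bta_h,\bps_h),(\bta_h,\bps_h))=\|\bta_h^{\tt d}\|^2_{0,\Omega^-}+\langle\W\bps_h,\bps_h\rangle_\Gamma+\langle\gamma^-_\bnu(\bta_h),\V\gamma^-_\bnu(\bta_h)\rangle_\Gamma$; then $\bdiv\bta_h=\0$ (from $\overline{\bf (H.2)}$), $\bta_h\in\mathbb H_0(\bdiv;\Omega^-)$, Lemmas \ref{cota-tau} and \ref{otra-cota-tau}, and \eqref{W-eliptico} with $\bps_h\in\mathbf H^{1/2}_0(\Gamma)$ (which follows from \eqref{eq-64-0} alone, not from $\overline{\bf (H.3)}$) give $\a_N((\bta_h,\bps_h),(\bta_h,\bps_h))\ge\widetilde\alpha\,\|(\bta_h,\bps_h)\|^2_{\X_N}$ with $\widetilde\alpha$ independent of $h$ and no mesh condition. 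You do remark at the end that ``the lack of strong symmetry \ldots is irrelevant,'' which is the right insight, but it contradicts your earlier symmetry claim and is not what your argument actually rests on; as written, the inclusion $\widetilde\V_{N,h}\subseteq\widetilde\V_N$ is a false step and must be replaced by the observation above. With that substitution, the remainder of your proposal (stability and quasi-optimality from the discrete Babu\v ska--Brezzi theory in the conforming setting) is correct and coincides with the paper's argument.
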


\bigskip
We end this paper by remarking that specific subspaces satisfying the hypotheses described in  each
one of the subsections of the present section can be easily found in the existing literature. Indeed, we first observe
that the expressions defining the bilinear forms $\b_N$, $\b_D$, and $\b_N$ involved in ${\bf (H.1)}$, 
$\widetilde{\bf (H.1)}$, and $\overline{\bf (H.1)}$, respectively, are the same as the one arising from the
mixed formulation of the linear elasticity problem in $\Omega^-$ with Dirichlet or homogeneous Neumann boundary 
conditions on $\partial\Omega^-$. Hence, any triple of finite element subspaces 
$(\mathbb H^\bsi_h,\mathbf L^\u_h,\mathbb L^\bch_h)$ 
(or $(\mathbb H^\bsi_{h,0},\mathbf L^\u_h,\mathbb L^\bch_h)$) yielding the discrete stability of that problem 
will satisfy our aforementioned hypotheses. In particular,
besides the classical PEERS (cf. \cite{abd-JJAM-1984}), we can consider for any integer $k \ge 1$ the PEERS$_k$ and the 
BDMS$_k$ elements, whose definitions and corresponding proofs of stability are provided in \cite{lv-NM-2004}.
In addition, new stable mixed finite element methods for 3D linear elasticity with a weak symmetry condition for the
stresses have been constructed recently in \cite{afw-IMA-2005} and \cite{afw-math-comp-2007} by
using the finite element exterior calculus. This is a quite abstract framework involving several sophisticated mathematical 
tools (see also \cite{afw-acta-2006} and \cite{aw-numerische-2002} for further details), which has been simplified
in some particular cases by employing more elementary and classical techniques (see, e.g. \cite{bbf-CPAA-2009}).
The resulting Arnold-Falk-Winther (AFW) element with the lowest polynomial degrees, which is referred to as of order $1$,
and which certainly constitutes another feasible choice verifying ${\bf (H.1)}$, $\widetilde{\bf (H.1)}$, and $\overline{\bf (H.1)}$,
consists of piecewise linear approximations for the stress $\bsi$ and piecewise constants functions for both the 
velocity $\u$ and vorticity $\bch$ unknowns.

\medskip
In turn, the bilinear form $\b_N$ involved in $\widehat{\bf (H.1)}$ corresponds also to the one arising from the
mixed formulation of the linear elasticity problem in $\Omega^-$, but now with Dirichlet boundary 
condition on $\Gamma$ and non-homogeneous Neumann boundary condition on $\Gamma_0$.
As stated at the beginning of Section \ref{section63}, we recall here that a different meshsize $\tilde h$
is assumed to define the finite element subspace $\mathbf H^\bla_{\tilde h}$ of $\mathbf H^{1/2}(\Gamma_0)$.
In other words, we consider a partition $\Gamma_{0,\tilde h}$ of $\Gamma_0$ that is independent of the
partition $\Gamma_{0,h}$ of $\Gamma_0$ inherited from a regular triangulation $\mathcal T_h$ of $\Omega^-$.
In this case, given an integer $k \ge 0$, and denoting the classical PEERS from \cite{abd-JJAM-1984}
by PEERS$_0$, we first take either PEERS$_k$ or BDMS$_k$ (when $k \ge 1$)
to define the triple $(\mathbb H^\bsi_h,\mathbf L^\u_h,\mathbb L^\bch_h)$, and 
then let $\mathbf H^\bla_{\tilde h}$ be the space of continuous piecewise polynomials on $\Gamma_{0,\tilde h}$ of 
degree $\le k+1$. Similarly, one could also take the above described AFW element of order $1$
and set $\mathbf H^\bla_{\tilde h}$ as the space of continuous piecewise polynomials on $\Gamma_{0,\tilde h}$ 
of degree $\le 1$. Hence, proceeding analogously as in the proofs of \cite[Lemmata 4.1, 4.2, and 4.3]{gmm-CMAME-2008},
one can easily show that for each one of the foregoing choices there exist $C_0, \,\tilde\beta  > 0$, independent of $h$ and
$\tilde h$, such that whenever $h \,\le\, C_0 \, \tilde h$, the hypothesis $\widehat{\bf (H.1)}$ is satisfied with the
constant $\tilde\beta$. We can also refer to \cite[Lemma 7.5]{ghm-SINUM-2010} for a closely related argument.

\medskip
Furthermore, it is easy to see that the above described pairs of finite element subspaces 
$\big(\mathbb H^\bsi_h,\mathbf L^\u_h\big)$ (or $\big(\mathbb H^\bsi_{h,0},\mathbf L^\u_h\big)$) 
also verify the respective hypotheses ${\bf (H.2)}$, $\widetilde{\bf (H.2)}$, $\widehat{\bf (H.2)}$, and 
$\overline{\bf (H.2)}$, which imply that the first components of the discrete kernels of $\b_N$ and $\b_D$
become all free divergent. In addition, it is clear that the corresponding subspaces $\mathbf L^\bch_h$ satisfy the 
approximation property required by ${\bf (H.5)}$. Moreover, it is quite straightforward to notice that the
first condition in $\widetilde{\bf (H.3)}$, and $\widehat{\bf (H.3)}$, are both satisfied by any of the previously 
mentioned choices of $\mathbb H^\bsi_h$.

\medskip
Next, with respect to the boundary element subspaces of $\mathbf H^{1/2}(\Gamma)$ and $\mathbf H^{1/2}_0(\Gamma)$, 
we first let $\Gamma_h$ and $\Gamma_{\tilde h}$ be independent partitions of $\Gamma$, with $\Gamma_h$ being 
the one inherited from a regular triangulation $\mathcal T_h$ of $\Omega^-$. Hence, in order to satisfy the regularity assumption
${\bf (H.3)}$ and the inverse inequality ${\bf (H.4)}$, it suffices to consider an integer $k \ge 0$,
set $\mathbf H^\bva_{\tilde h}$ as the space of continuous piecewise polynomials on $\Gamma_{\tilde h}$ 
of degree $\le k+1$, and define $\mathbf H^\bva_{\tilde h,0}$ as the intersection of $\mathbf H^{1/2}_0(\Gamma)$ 
with $\mathbf H^\bva_{\tilde h}$. Similarly, defining $\mathbf H^\bva_h$ as the space of continuous piecewise 
polynomials on $\Gamma_h$ of degree $\le k+1$, we find that the second condition in $\widetilde{\bf (H.3)}$, and
$\overline{\bf (H.3)}$, follow straightforwardly from the fact that $\mathbf{RM}(\Gamma)$ is certainly 
contained in the space of continuous piecewise polynomials on $\Gamma_h$ of degree $\le 1$.
On the other hand, concerning $\widetilde{\bf (H.4)}$ and $\widehat{\bf (H.4)}$, we just remark that the existence
of $\widetilde\bps\in \mathbf H^{1/2}(\Gamma)$ and $\widetilde\bxi\in \mathbf H^{1/2}(\Gamma_0)$
satisfying the required conditions in those hypotheses, follows by simply adapting the procedure suggested
in \cite[paragraph right before Lemma 8]{ggm-NM-2014} to the present 3D case (see also \cite[Section 3.2]{gos-MC-2011}).
To this end, we just need to define $\mathbf H^\bva_h$ (resp. $\mathbf H^\bla_{\tilde h}$) as 
any subspace of $\mathbf H^{1/2}(\Gamma)$ (resp. $\mathbf H^{1/2}(\Gamma_0)$) containing the space of continuous 
piecewise polynomials on $\Gamma_h$ (resp. $\Gamma_{0,h}$) of degree $\le 1$.
 
\medskip
Finally, while the definitions of $\mathbf H^\bva_{\tilde h,0}$ and $\mathbf H^\bva_{h,0}$ (cf. \eqref{eq-61-0}, 
\eqref{eq-62-0}, \eqref{eq-63-0}, and \eqref{eq-64-0}) are theoretically correct, we remark that for the sake of 
the computational implementation of the corresponding Galerkin schemes, it will be better off to introduce Lagrange 
multipliers handling the orthogonality conditions defining these boundary element subspaces. We omit further details
and leave this issue and other related matters, including numerical essays and the analysis of the
associated experimental rates of convergence, for a separate work.

\end{document}